\documentclass{amsart}
\usepackage{amsmath}
\usepackage{paralist}
\usepackage{url}
\usepackage{color}
\usepackage{graphicx}
\usepackage{subcaption}
\usepackage{multirow}
\usepackage{mathtools}
\usepackage{amssymb}
\usepackage{booktabs}
\usepackage{makecell}

\usepackage{mathrsfs}
\usepackage{xspace}
\usepackage{comment}
\usepackage{arydshln}
\usepackage{caption}
\usepackage{algorithm}
\usepackage{algpseudocode}
\usepackage{hyperref}
\usepackage{float}
\usepackage{placeins}
\usepackage{kotex}
\usepackage{tabularx}

\usepackage{tikz}
\usetikzlibrary{arrows}
\usepackage{pgfplots}
\pgfplotsset{compat=newest}

\newcommand{\RN}[1]{%
  \textup{\uppercase\expandafter{\romannumeral#1}}%
}

\newcommand{\bdes}{\begin{description}}
	\newcommand{\edes}{\end{description}}
\newcommand{\bal}{\begin{align}}
\newcommand{\eal}{\end{align}}
\newcommand{\bnum}{\begin{enumerate}}
	\newcommand{\enum}{\end{enumerate}}
\newcommand{\bit}{\begin{itemize}}
	\newcommand{\eit}{\end{itemize}}
\newcommand{\bea}{\begin{eqnarray}}
\newcommand{\eea}{\end{eqnarray}}
\newcommand{\be}{\begin{equation}}
\newcommand{\ee}{\end{equation}}
\newcommand{\baray}{\begin{array}}
	\newcommand{\earay}{\end{array}}
\newcommand{\bsry}{\begin{subarray}}
	\newcommand{\esry}{\end{subarray}}
\newcommand{\bca}{\begin{cases}}
	\newcommand{\eca}{\end{cases}}
\newcommand{\bcen}{\begin{center}}
	\newcommand{\ecen}{\end{center}}
\newcommand{\bbm}{\begin{bmatrix}}
	\newcommand{\ebm}{\end{bmatrix}}
\newcommand{\bmx}{\begin{matrix}}
	\newcommand{\emx}{\end{matrix}}
\newcommand{\bpm}{\begin{bmatrix}}
	\newcommand{\epm}{\end{bmatrix}}
\newcommand{\btab}{\begin{tabular}}
	\newcommand{\etab}{\end{tabular}}

\DeclareMathOperator{\bcirc}{bcirc}
\DeclareMathOperator{\unfold}{unfold}
\DeclareMathOperator{\fold}{fold}
\DeclareMathOperator{\fft}{fft}
\DeclareMathOperator{\ifft}{ifft}
\DeclareMathOperator{\blkdiag}{blkdiag}
\DeclareMathOperator{\trace}{trace}

\newtheorem{theorem}{Theorem}[section]

\newtheorem{lemma}[theorem]{Lemma}

\newtheorem{ass}[theorem]{Assumption}
\newtheorem{defi}[theorem]{Definition}
\theoremstyle{definition}

\numberwithin{equation}{section}

\newcolumntype{Y}{>{\centering\arraybackslash}X}

\pgfplotsset{every axis/.append style={tick label style={/pgf/number format/fixed},font=\scriptsize,ylabel near ticks,xlabel near ticks,grid=major}}

\pgfmathdeclarefunction{sumexp}{3}{%
  \begingroup%
  \pgfkeys{/pgf/fpu}
  \pgfmathsetmacro{\myx}{#1}%
  \pgfmathtruncatemacro{\myxmin}{#2}%
  \pgfmathtruncatemacro{\myxmax}{#3}%
  \pgfmathsetmacro{\mysum}{0}%
  \pgfplotsforeachungrouped\XX in {\myxmin,...,\myxmax}%
    {\pgfmathsetmacro{\mysum}{\mysum+exp(\XX)}}%
  \pgfmathparse{\mysum+exp(#1)}%
  \pgfmathfloattofixed\pgfmathresult
  \pgfmathsmuggle\pgfmathresult\endgroup%
}%

\begin{document}

\title[SD-TRK for Doubly Noisy Tensor Systems]
{A Spectrally Damped Tensor Randomized Kaczmarz Method for Doubly Noisy Tensor Systems}

\author[Nahyun Lee]{Nahyun~Lee$^*$}
\author[Jiyoung Choi]{Jiyoung~Choi$^*$}

\def\thefootnote{*}\footnotetext{All authors contributed equally.}

\address{Nahyun Lee, Jiyoung Choi,  Department of Mathematics, University of California San Diego, 9500 Gilman Drive, La Jolla, CA, USA, 92093.}
\email{nal003@ucsd.edu, jichoi@ucsd.edu}

\begin{abstract}
Tensor randomized Kaczmarz (TRK) methods are efficient row-action solvers for
tensor linear systems under the t-product framework. We study their behavior
under a doubly noisy perturbation model. In this model, both the system tensor and
the right-hand side tensor are corrupted. We first analyze standard TRK and derive
an expected error recursion with two terms. One term is contractive, and the other
is a persistent perturbation term. This explains the noise-limited and
semi-convergent behavior that can occur when the observed tensor system is
inconsistent. We then introduce a spectrally damped tensor randomized Kaczmarz
method (SD-TRK). We
prove an expected error recursion for SD-TRK that separates error propagation from
noise injection. The bound makes explicit a speed--robustness trade-off. We also
give an FFT-based implementation that applies the damped update slice-wise in the
Fourier domain. This implementation allows frequency-dependent damping parameters
in practice. Numerical experiments on synthetic tensor systems illustrate the stabilization
behavior of SD-TRK relative to standard TRK in noisy and ill-conditioned settings.
We also include a two-pass image reconstruction comparison under the same
noisy reconstruction pipeline.
\end{abstract}

\maketitle

\section{Introduction}

The Kaczmarz method~\cite{Kaczmarz1937} is a classical row-action method for solving linear systems. 
Its randomized version has been widely studied as an iterative solver for large-scale overdetermined systems. 
Each iteration uses only one row of the system, and the method has an expected linear convergence guarantee in the consistent setting \cite{StrohmerVershynin2009}. 
This idea has led to many randomized
projection methods for linear systems and least-squares problems, including noisy variants, randomized extended Kaczmarz methods for least-squares problems, and general sketch-and-project frameworks
\cite{GowerRichtarik2015,Needell2010,ZouziasFreris2013}. 

Noise changes the behavior of randomized Kaczmarz methods. 
Instead of converging exactly to the noiseless solution, the iterates may approach a noise-dependent error neighborhood or exhibit semi-convergence \cite{Needell2010}. 
Doubly noisy matrix systems have also been analyzed recently \cite{BBDLM2024}.
In such systems, both the coefficient matrix and the right-hand side are perturbed. 
This model is relevant when the operator is built from uncertain measurements or approximate physical models.

Many modern data sets and inverse problems are tensor-valued rather than matrix-valued. 
Examples include color images, videos, multi-channel measurements, and other multidimensional arrays \cite{CichockiEtAl2015,KoldaBader2009,SidiropoulosEtAl2017}. 
Tensor methods can preserve multi-way structure that may be lost under naive matricization. 
In this paper, we study third-order tensor linear systems under the tensor t-product framework.
The t-product was introduced by Kilmer and Martin \cite{KilmerMartin2011}. 
It was further developed in
\cite{KernfeldKilmerAeron2015,KilmerBramanHaoHoover2013}. 
This framework gives a matrix-mimetic algebra for third-order tensors through block-circulant representations and Fourier-domain slice-wise multiplication. 
Ma and Molitor used this framework to introduce the tensor randomized Kaczmarz method (TRK) and proved expected linear convergence for consistent tensor linear systems \cite{MaMolitor2022}. 
Recent work has also developed randomized Kaczmarz methods for t-product tensor systems with factorized operators \cite{CastilloEtAl2025}.

The existing TRK theory primarily addresses consistent tensor systems. In practical
applications, both the system tensor and the observation tensor may be corrupted.
The observed tensor system is then generally inconsistent and need not have the
noiseless target as an exact solution. Consequently, the standard TRK iteration is
no longer governed by the purely contractive mechanism that appears in the
noiseless theory. In this paper we formalize this doubly noisy model in
Subsection~\ref{sec:doubly_error}. We then analyze the resulting error recursion for
standard TRK and introduce a damped variant designed to reduce perturbation
amplification.

Our stabilization strategy combines relaxation with spectral damping. Relaxation
parameters have long been used in Kaczmarz-type methods for inconsistent systems
\cite{Censor1983,Nikazad2024}. Tikhonov-type regularization and spectral filtering
are standard tools for suppressing noise amplification in ill-conditioned inverse
problems \cite{Chung2011,Hansen1998,Hansen2006,Tikhonov1963}. Motivated by these
ideas, we introduce a spectrally damped tensor randomized Kaczmarz method
(SD-TRK). The method replaces the inverse normalization term in the TRK update by a
regularized inverse and also includes a relaxation parameter. In the Fourier-domain
implementation, damping can be chosen slice-wise. This allows different frequency
components to be stabilized with different strengths according to their local
conditioning.

The main contributions of this paper are as follows.

\begin{itemize}
\item We analyze standard TRK under a doubly noisy tensor model. In particular, we
derive an expected error recursion showing that the standard TRK error consists of 
a contractive term and a persistent perturbation term. The perturbation term is
amplified by a frequency-imbalance constant that reflects how the observed row slices 
behave across Fourier modes.

\item We introduce the spectrally damped TRK method for doubly noisy tensor systems.
The proposed update incorporates both a relaxation parameter \(\omega\) and a damping
parameter \(\lambda\), and it reduces to standard TRK when \(\omega=1\) and
\(\lambda=0\).

\item We prove an expected error recursion for SD-TRK. The resulting bound separates
the propagation of the previous error from the perturbation injected by the doubly
noisy model, thereby making explicit the speed--robustness trade-off controlled by
\(\omega\) and \(\lambda\).

\item We provide an FFT-based implementation of SD-TRK. The implementation avoids
forming the block-circulant matrix explicitly and applies the damped update
slice-wise in the Fourier domain, where frequency-dependent damping parameters
\(\lambda^{(k)}\) can be used in practice.

\item We also include a noise-scaling comparison with a damped reference horizon
and a two-pass image reconstruction comparison between standard TRK and SD-TRK
under the same reconstruction procedure.
\end{itemize}

The remainder of the paper is organized as follows. Section~\ref{sec:pre} reviews
the matrix randomized Kaczmarz method, the tensor t-product, and the standard
TRK method. Section~\ref{sec:error} analyzes the behavior of standard TRK in
noiseless and doubly noisy tensor systems. Section~\ref{sec:sdtrk} introduces
SD-TRK and establishes its expected error recursion. Section~\ref{sec:experiments} presents numerical experiments, including synthetic
tensor systems and an image reconstruction comparison. 
Section~\ref{sec:con} concludes the paper.

\section{Preliminaries}\label{sec:pre}

\subsection*{Notation}
Throughout this paper, scalars and vectors are denoted by lowercase letters (e.g., $a$), matrices by capital letters (e.g., $A$), and third-order tensors by calligraphic capital letters (e.g., $\mathcal{A}$).
The set of integers $\{1, 2, \dots, m\}$ is denoted by $[m]$.
For a matrix $A$, we denote its $i$-th row by $A_{i:}$.
For a vector $a$, the Euclidean norm is denoted by $\|a\|$ or $\|a\|_2$, and the standard inner product is denoted by $\langle \cdot, \cdot \rangle$.
For a matrix $A$, $\|A\|_2$ denotes the spectral norm.
The conjugate transpose is denoted by the superscript asterisk (e.g., $A^*$).
For a matrix $A$, $\sigma_{\min}(A)$ denotes its smallest singular value and $\sigma_{\max}(A)$ denotes its largest singular value.
For a third-order tensor $\mathcal{A}$,
$\mathcal{A}_{i::}$ denotes the $i$-th horizontal slice (row slice), and $\mathcal{A}_{::k}$ denotes the $k$-th frontal slice.
The Frobenius norm of a tensor $\mathcal{A}$ is denoted by $\|\mathcal{A}\|_F$ and calculated as
\begin{equation*}
    \|\mathcal{A}\|_F = \sqrt{\sum_{i,j,k} |\mathcal{A}_{ijk}|^2}.
\end{equation*}
We write \(\fft(\cdot,[\,],3)\) for the one-dimensional discrete Fourier transform applied to every tube fiber along the third dimension, with transform length \(n_3\); this notation specifies the transform direction and is independent of the implementation language.
For a tensor \(\mathcal A\in\mathbb C^{n_1\times n_2\times n_3}\), we write
\[
\hat{\mathcal A}:=\fft(\mathcal A,[\,],3).
\]
The inverse transform is denoted by \(\ifft(\cdot,[\,],3)\).
The \(k\)-th frontal slice of \(\hat{\mathcal A}\) is denoted by
\(\hat A^{(k)}\), and the \(i\)-th row of this slice by \(\hat A_{i:}^{(k)}\).
Thus, for the observed tensor \(\tilde{\mathcal A}\),
\(\hat{\tilde A}_{i:}^{(k)}\) denotes the \(i\)-th row of the \(k\)-th frontal
slice of \(\hat{\tilde{\mathcal A}}\).
Fourier-domain quantities are generally complex-valued, even when the original
tensors are real-valued.

\subsection{Matrix randomized Kaczmarz (MRK)}

The classical matrix randomized Kaczmarz method (MRK) for solving a linear system $Ax = b$ iteratively projects the current iterate onto the hyperplane defined by a single row of the system. At each iteration $t$, a row index $i_t$ is selected with probability proportional to $\|A_{i_t:}\|^2$, and the update is given by \cite{StrohmerVershynin2009}:
\begin{equation*}
x_{t+1} = x_t - A_{i_t:}^* \frac{\langle A_{i_t:}, x_t \rangle - b_{i_t}}{\|A_{i_t:}\|^2},
\end{equation*}
where $A_{i_t:}^*$ denotes the conjugate transpose of the selected row. 
This row-wise structure makes the method well suited for large-scale problems, since each update uses only the selected row and does not require forming dense matrix factorizations.

\subsection{Tensor linear algebra}

Tensors appear in a wide range of applications, and manipulating them in their native form, rather than simply reshaping them into matrices, can retain important structural information and often yield computational benefits. 
However, many core notions and results from classical linear algebra do not extend to tensors in a straightforward way. 

To overcome this difficulty, we focus on tensor linear systems formulated under the tensor t-product. 
The t-product, introduced by Kilmer and Martin \cite{KilmerMartin2011}, is a bilinear tensor operation that carries many familiar matrix-algebra concepts and properties into the tensor setting. 
Notably, it provides a natural notion of orthogonality for tensors, which plays a central role in the analysis of the tensor randomized Kaczmarz (TRK) method.

To define the t-product, we first introduce a preliminary definition.

\begin{defi}
For \(\mathcal A \in \mathbb C^{n_1\times n_2\times n_3}\), let
\(\bcirc(\mathcal A)\) denote the block-circulant matrix
\begin{equation*}
\bcirc(\mathcal A) =
\left[
\begin{array}{ccccc}
A^{(1)} & A^{(n_3)} & A^{(n_3-1)} & \cdots & A^{(2)} \\
A^{(2)} & A^{(1)} & A^{(n_3)} & \cdots & A^{(3)} \\
\vdots & \vdots & \vdots & \ddots & \vdots\\
A^{(n_3)} & A^{(n_3-1)} & A^{(n_3-2)} & \cdots & A^{(1)}
\end{array}
\right]
\in \mathbb C^{n_1n_3 \times n_2 n_3},
\end{equation*}
where \(A^{(i)}=\mathcal A_{::i}\) for \(i=1,\ldots,n_3\).
\end{defi}

To express the tensor operation induced by \(\bcirc(\mathcal A)\) using standard
matrix multiplication, we reshape a third-order tensor into a block column matrix
by stacking its frontal slices. This is done via the \(\unfold\) operator, whose
inverse is \(\fold\):
\begin{equation*}
\unfold(\mathcal A)=
\left[
\begin{array}{c}
\mathcal A_{::1}\\
\vdots\\
\mathcal A_{::n_3}
\end{array}
\right],
\qquad
\fold(\unfold(\mathcal A))=\mathcal A.
\end{equation*}

\begin{defi}
For given tensors \(\mathcal A\in\mathbb C^{n_1\times n_2\times n_3}\) and
\(\mathcal B\in\mathbb C^{n_2\times n_4\times n_3}\), the t-product
\(\mathcal A\mathcal B\) is defined as
\begin{equation*}
\mathcal A\mathcal B
=
\fold\!\left(\bcirc(\mathcal A)\,\unfold(\mathcal B)\right)
\in \mathbb C^{n_1\times n_4\times n_3}.
\end{equation*}
\end{defi}

The block-circulant and unfolding representations above are useful for theoretical
analysis because they express the t-product using standard matrix multiplication.
In implementation, however, we do not explicitly construct these large matrices.
Instead, we compute the t-product in the Fourier domain using the Fast Fourier
Transform (FFT). This preserves the native tensor structure and reduces the
computational cost. Further details are given in Appendix~\ref{sec:appendix_fft}.

To introduce TRK in Subsection~\ref{sec:TRK}, we need a few further definitions in addition to those given above.

\begin{defi}
The $n \times n \times n_3$ identity tensor is the tensor whose first frontal slice is the $n \times n$ identity matrix and whose remaining entries are all zeros, and we denote it by $\mathcal{I}$.
\end{defi}

\begin{defi}
The conjugate transpose of a tensor $\mathcal{A} \in \mathbb{C}^{n_1 \times n_2 \times n_3}$ is denoted by $\mathcal{A}^*$ and is produced by taking the conjugate transpose of all frontal slices and reversing the order of the frontal slices $2,\ldots,n_3$.
\end{defi}

For computing orthogonal projections and least-squares solutions, the conjugate transpose plays a central role. 
It is defined so that $(\mathcal{A}\mathcal{B})^* = \mathcal{B}^*\mathcal{A}^*$ holds.
With the identity tensor defined, we can now rigorously define invertibility under the $t$-product.

\begin{defi}
A tensor $\mathcal{A}$ is invertible if there exists an inverse tensor $\mathcal{A}^{-1}$ such that
\begin{equation*}
    \mathcal{A}\mathcal{A}^{-1} = \mathcal{A}^{-1}\mathcal{A} = \mathcal{I}.
\end{equation*}
\end{defi}

Orthogonality is crucial for the convergence analysis of Kaczmarz methods, since the algorithm can be viewed as performing successive orthogonal projections onto solution spaces.

\begin{defi}
A tensor $\mathcal{Q} \in \mathbb{C}^{n_1 \times n_1 \times n_3}$ is orthogonal if
\begin{equation*}
    \mathcal{Q}\mathcal{Q}^* = \mathcal{Q}^*\mathcal{Q} = \mathcal{I}.
\end{equation*}
\end{defi}

\subsection{Tensor randomized Kaczmarz (TRK)}\label{sec:TRK}

Ma and Molitor \cite{MaMolitor2022} introduced a Kaczmarz-type iterative method for solving $t$-product tensor linear systems
\begin{equation*}
    \mathcal{A}\mathcal{X} = \mathcal{B},
\end{equation*}
where $\mathcal{A} \in \mathbb{C}^{n_1 \times n_2 \times n_3}$, $\mathcal{X} \in \mathbb{C}^{n_2 \times n_4 \times n_3}$, and $\mathcal{B} \in \mathbb{C}^{n_1 \times n_4 \times n_3}$.
The TRK update for the tensor linear system is
\begin{equation}\label{eq:TRK}
    \mathcal{X}^{t+1}=\mathcal{X}^t - \mathcal{A}_{i_t::}^{*}(\mathcal{A}_{i_t::}\mathcal{A}_{i_t::}^{*})^{-1}(\mathcal{A}_{i_t::}\mathcal{X}^t-\mathcal{B}_{i_t::}),
\end{equation}
where the index $i_t \in [n_1]$ is selected according to a probability distribution on the row indices at each iteration $t$.

To ensure that the TRK update and the corresponding tensor projection operators discussed later are rigorously well-defined throughout this paper, we make the following fundamental assumption.

\begin{ass} \label{assump:invertibility}
Whenever a row slice $\mathcal{R}_{i::}$ is used in a TRK-type update, the associated normalization tensor
\[
\mathcal{R}_{i::}\mathcal{R}_{i::}^*
\]
is assumed to be invertible.
\end{ass}

Here, $\mathcal{R}$ denotes the tensor that appears in the update, namely the system tensor in the noiseless setting and the observed tensor in the doubly noisy setting.

The TRK update \eqref{eq:TRK} can be viewed as a direct algebraic generalization of the classical MRK method. 
While MRK projects the iterate onto a hyperplane defined by a row vector, TRK projects the iterate onto a tensor subspace defined by a horizontal slice.
A key distinction lies in the normalization step. Scalar division in MRK is replaced by the inversion of a tube in TRK. Table~\ref{tab:mrk_vs_trk} summarizes this structural correspondence.

\begin{table}[ht]
    \centering
    \caption{Structural Correspondence between Matrix and Tensor Kaczmarz}
    \label{tab:mrk_vs_trk}
    \renewcommand{\arraystretch}{1.25}
    \begin{tabular}{c|c|c}
        \hline
        Component & MRK & TRK \\
        \hline
        Input Data & Row vector $A_{i_t:} \in \mathbb{C}^{1 \times n_2}$ & Row slice $\mathcal{A}_{i_t::} \in \mathbb{C}^{1 \times n_2 \times n_3}$ \\
        Operation & Inner product $\langle A_{i_t:},x_t\rangle$ & t-product $\mathcal{A}_{i_t::}\mathcal{X}^t$ \\
        Residual &  $\langle A_{i_t:},x_t\rangle - b_{i_t}$ &  $\mathcal{A}_{i_t::}\mathcal{X}^t - \mathcal{B}_{i_t::}$ \\
        Normalization & Scalar division $1/\|A_{i_t:}\|_2^{2}$ & Tube inverse $(\mathcal{A}_{i_t::}\mathcal{A}_{i_t::}^{*})^{-1}$ \\
        Update Direction & $A_{i_t:}^*$ & $\mathcal{A}_{i_t::}^*$ \\
        \hline
    \end{tabular}
\end{table}

The following section recalls the noiseless convergence guarantee for standard TRK
and then analyzes how the error recursion changes under the doubly noisy model
formalized in Subsection~\ref{sec:doubly_error}.

\section{Error Analysis of TRK}\label{sec:error}

In this section, we analyze the error behavior of TRK in the noiseless and doubly noisy settings.

\subsection{Standard TRK in the noiseless setting}

The TRK method is an effective solver for consistent tensor linear systems. Ma and Molitor \cite{MaMolitor2022} showed that the TRK algorithm converges linearly in expectation for a consistent tensor linear system as follows.

\begin{theorem}[Convergence of standard TRK \cite{MaMolitor2022}] \label{thm:standard_convergence}
Consider the consistent tensor linear system $\mathcal{A}\mathcal{X} = \mathcal{B}$. Let $\mathcal{X}^\star$ be the exact solution and $\mathcal{X}^t$ be the $t$-th iterate of the TRK algorithm. If each row slice index $i_t \in [n_1]$ is selected with probability $p_{i_t} = \frac{\|\mathcal{A}_{i_t::}\|_F^2}{\|\mathcal{A}\|_F^2}$, then the expected error satisfies:
\begin{equation}\label{ineq:TRK}
\mathbb{E}[\|\mathcal{X}^{t+1} - \mathcal{X}^\star\|_F^2] \le \left( 1 - \frac{\sigma_{\min}^2(\bcirc(\mathcal{A}))}{n_3 \|\mathcal{A}\|_F^2} \right) \mathbb{E}[\|\mathcal{X}^t - \mathcal{X}^\star\|_F^2].
\end{equation}
\end{theorem}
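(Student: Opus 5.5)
The plan is to reduce the TRK iteration to a sequence of orthogonal projections acting on the error, and then lower-bound the expected decrease using the Fourier-domain block diagonalization of the t-product. Throughout, write $\mathcal E^t := \mathcal X^t - \mathcal X^\star$ and $\mathcal A_i := \mathcal A_{i::}$.

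\textbf{Step 1: error recursion.} Since the system is consistent, $\mathcal B_{i::} = \mathcal A_i \mathcal X^\star$. Substituting this into \eqref{eq:TRK} gives
\[
\mathcal E^{t+1} = (\mathcal I - \mathcal P_{i_t})\,\mathcal E^t,
\qquad
\mathcal P_i := \mathcal A_i^*(\mathcal A_i\mathcal A_i^*)^{-1}\mathcal A_i .
\]
Under Assumption~\ref{assump:invertibility}, $\mathcal P_i$ is well defined. Using $(\mathcal{AB})^*=\mathcal B^*\mathcal A^*$ and associativity of the t-product, I will check that $\mathcal P_i^*=\mathcal P_i$ and $\mathcal P_i^2=\mathcal P_i$. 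Hence $\bcirc(\mathcal P_i)$ is an orthogonal projector on $\mathbb C^{n_2n_3\times n_4}$ acting on $\unfold(\mathcal E^t)$. Since $\|\mathcal X\|_F=\|\unfold(\mathcal X)\|_F$, Pythagoras yields
\[
\|\mathcal E^{t+1}\|_F^2=\|\mathcal E^t\|_F^2-\|\mathcal P_{i_t}\mathcal E^t\|_F^2 .
\]

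\textbf{Step 2: lower bound for the projected part (the key step).} I will pass to the Fourier domain, where the t-product becomes slice-wise matrix multiplication and $\|\mathcal X\|_F^2=\frac1{n_3}\sum_k\|\hat X^{(k)}\|_F^2$. The tube $\mathcal A_i\mathcal A_i^*$ has Fourier entries $\|\hat a_i^{(k)}\|^2$, where $\hat a_i^{(k)}$ is the $i$-th row of $\hat A^{(k)}$. Therefore
\[
\|\mathcal P_i\mathcal E\|_F^2=\frac1{n_3}\sum_k \frac{\|\hat a_i^{(k)}\hat E^{(k)}\|^2}{\|\hat a_i^{(k)}\|^2}.
\]
Each denominator satisfies $\|\hat a_i^{(k)}\|^2\le\sum_{k'}\|\hat a_i^{(k')}\|^2=n_3\|\mathcal A_i\|_F^2$. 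Replacing every denominator by this common bound gives
\[
\|\mathcal P_i\mathcal E\|_F^2\ \ge\ \frac{\|\mathcal A_i\mathcal E\|_F^2}{n_3\|\mathcal A_i\|_F^2}.
\]
This crude uniform bound on the tube entries is exactly where the factor $n_3$ in the rate enters. It is the main point to get right, in particular the Parseval normalization of the FFT.

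\textbf{Step 3: expectation and conclusion.} Conditioning on $\mathcal X^t$ and using $p_i=\|\mathcal A_i\|_F^2/\|\mathcal A\|_F^2$, the factors $\|\mathcal A_i\|_F^2$ cancel:
\[
\mathbb E\bigl[\|\mathcal P_{i_t}\mathcal E^t\|_F^2 \,\big|\, \mathcal X^t\bigr]\ \ge\ \sum_i\frac{\|\mathcal A_i\mathcal E^t\|_F^2}{n_3\|\mathcal A\|_F^2}=\frac{\|\mathcal A\mathcal E^t\|_F^2}{n_3\|\mathcal A\|_F^2}.
\]
Next, $\|\mathcal A\mathcal E^t\|_F=\|\bcirc(\mathcal A)\unfold(\mathcal E^t)\|_F$, which is bounded below by $\sigma_{\min}(\bcirc(\mathcal A))\|\mathcal E^t\|_F$. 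This holds when $\bcirc(\mathcal A)$ has full column rank; otherwise I would take $\mathcal X^\star$ to be the projection of $\mathcal X^0$ onto the solution set. In that case every update lies in the range of $\bcirc(\mathcal A)^*$, so $\unfold(\mathcal E^t)$ stays there and $\sigma_{\min}$ is read as the smallest nonzero singular value. Combining this with Step~1 and applying the tower property gives \eqref{ineq:TRK}.
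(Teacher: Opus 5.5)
Your proposal is correct and takes essentially the same approach as the paper. The paper cites this result from Ma and Molitor rather than proving it, but its proof of Theorem~\ref{thm:upper_recursion} in Appendix~\ref{sec:upper_recursion} reduces to your argument when $\mathcal E_A=\mathcal E_B=0$: the Pythagorean identity for the projector, the Fourier-slice formula for $\|\mathcal P_i\mathcal E\|_F^2$, the bound of each denominator by $n_3\|\mathcal A_{i::}\|_F^2$, cancellation of the sampling weights, and the $\sigma_{\min}(\bcirc(\mathcal A))$ step. Your extra treatment of the rank-deficient case (smallest nonzero singular value, with the error kept in the range of $\bcirc(\mathcal A)^*$) is a valid extension that the paper does not need, since that appendix argument assumes full column rank.
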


Theorem~\ref{thm:standard_convergence} provides the theoretical guarantee for the convergence of standard TRK in the absence of perturbations. 
Note that
\[
\sigma_{\min}^2(\bcirc(\mathcal A))
\le
\|\bcirc(\mathcal A)\|_F^2
=
n_3\|\mathcal A\|_F^2,
\]
so the contraction factor in \eqref{ineq:TRK} is nonnegative.
If \(\bcirc(\mathcal A)\) has full column rank, then
\(\sigma_{\min}(\bcirc(\mathcal A))>0\), so the contraction factor is strictly smaller than one. 
In this case, the expected error converges to zero as \(t\to\infty\) in the consistent noiseless setting.
This behavior is illustrated in Subsubsection~\ref{ex511}.

\subsection{Standard TRK under the doubly noisy model}
\label{sec:doubly_error}

We now formalize the doubly noisy tensor model described in the Introduction. Let
\(\mathcal{A}\) and \(\mathcal{B}\) be the noiseless tensors satisfying
\begin{equation}\label{eq:t_linearsystem}
\mathcal{A}  \mathcal{X}^\star = \mathcal{B},
\end{equation}
so that the underlying tensor linear system is consistent. The observed tensors are
\begin{equation}\label{eq:doubly}
\tilde{\mathcal{A}} = \mathcal{A} + \mathcal E_A,
\qquad
\tilde{\mathcal{B}} = \mathcal{B} + \mathcal E_B,
\end{equation}
where \(\mathcal E_A\) and \(\mathcal E_B\) denote the perturbation tensors. The
corresponding observed system is
\begin{equation}\label{eq:doublytls}
    \tilde{\mathcal{A}}  \mathcal{X} = \tilde{\mathcal{B}}.
\end{equation}
This system is not necessarily consistent and need not have
\(\mathcal X^\star\) as an exact solution.

To analyze standard TRK under \eqref{eq:doubly}, we study the error tensor
\[
\mathcal{E}^t := \mathcal{X}^t - \mathcal{X}^\star.
\]
This lets us separate projection-induced contraction from noise injection at each
iteration. Throughout the analysis below, \(\mathbb{E}[\cdot]\) denotes expectation
with respect to the randomized row selection. Under
Assumption~\ref{assump:invertibility} for the observed tensor
\(\tilde{\mathcal A}\), the projection operator is well-defined. The error update
then admits the following orthogonal decomposition.

\begin{lemma} \label{lem:orthogonality}
Assume that Assumption~\ref{assump:invertibility} holds for the observed tensor $\tilde{\mathcal{A}}$.
Let $\tilde{\mathcal{P}}_{i_t} = \tilde{\mathcal{A}}_{i_t::}^* (\tilde{\mathcal{A}}_{i_t::}\tilde{\mathcal{A}}_{i_t::}^*)^{-1} \tilde{\mathcal{A}}_{i_t::}$ be the orthogonal projection operator onto the tensor subspace associated with the $i_t$-th row slice of the observed tensor $\tilde{\mathcal{A}}$. The error update of the TRK algorithm satisfies the following orthogonal decomposition:
\begin{equation*}
\begin{aligned}
\|\mathcal{E}^{t+1}\|_F^2
&=
\|(\mathcal{I} - \tilde{\mathcal{P}}_{i_t})\mathcal{E}^t\|_F^2  \\
&\quad+
\left\|
\tilde{\mathcal{A}}_{i_t::}^*
(\tilde{\mathcal{A}}_{i_t::}\tilde{\mathcal{A}}_{i_t::}^*)^{-1}
\left(
(\mathcal E_A)_{i_t::}\mathcal X^\star
-
(\mathcal E_B)_{i_t::}
\right)
\right\|_F^2 .
\end{aligned}
\end{equation*}
\end{lemma}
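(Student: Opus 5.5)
We will substitute the observed TRK update into the error and split the result into a projection part and a noise part, then show the two parts are orthogonal in the Frobenius inner product. Write $\tilde{\mathcal A}_i:=\tilde{\mathcal A}_{i_t::}$, $\mathcal M:=(\tilde{\mathcal A}_i\tilde{\mathcal A}_i^*)^{-1}$ (well defined by Assumption~\ref{assump:invertibility}), and $\mathcal N:=(\mathcal E_A)_{i_t::}\mathcal X^\star-(\mathcal E_B)_{i_t::}$. Since $\mathcal A\mathcal X^\star=\mathcal B$, the observed residual is
\[
\tilde{\mathcal A}_i\mathcal X^t-\tilde{\mathcal B}_{i_t::}
=\tilde{\mathcal A}_i\mathcal E^t+\tilde{\mathcal A}_i\mathcal X^\star-\mathcal B_{i_t::}-(\mathcal E_B)_{i_t::}
=\tilde{\mathcal A}_i\mathcal E^t+\mathcal N .
\]
Subtracting $\mathcal X^\star$ from \eqref{eq:TRK} applied to the observed system therefore gives
\[
\mathcal E^{t+1}=(\mathcal I-\tilde{\mathcal P}_{i_t})\mathcal E^t-\tilde{\mathcal A}_i^*\mathcal M\mathcal N .
\]
The claimed identity will follow from the Pythagorean theorem once we show that the two summands are orthogonal. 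The sign in front of the noise term does not affect its norm.

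For orthogonality, we pass to the block-circulant representation. The map $\mathcal X\mapsto\unfold(\mathcal X)$ is an isometry from the Frobenius norm to the Frobenius norm, and $\bcirc$ is a multiplicative $*$-homomorphism: $\bcirc(\mathcal{AB})=\bcirc(\mathcal A)\bcirc(\mathcal B)$, $\bcirc(\mathcal A^*)=\bcirc(\mathcal A)^*$, and $\bcirc(\mathcal I)=I$. The Frobenius inner product of tensors equals $\mathrm{Re}\,\trace$ of the corresponding unfolded matrix products, so the Pythagorean identity can be checked at the matrix level. Set $P=\bcirc(\tilde{\mathcal P}_{i_t})=R^*(RR^*)^{-1}R$ with $R=\bcirc(\tilde{\mathcal A}_i)$; here $RR^*=\bcirc(\tilde{\mathcal A}_i\tilde{\mathcal A}_i^*)$ is invertible, with inverse $\bcirc(\mathcal M)$. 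Then $P$ is Hermitian and idempotent, so it is an orthogonal projection.

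The first summand unfolds to $(I-P)\unfold(\mathcal E^t)$ and the second to $R^*\bcirc(\mathcal M)\unfold(\mathcal N)$, which lies in $\mathrm{range}(R^*)=\mathrm{range}(P)$. Since $(I-P)$ annihilates $\mathrm{range}(P)$, we get $(I-P)R^*=R^*-R^*(RR^*)^{-1}RR^*=0$. Hence the cross term
\[
\langle (I-P)\unfold(\mathcal E^t),\,R^*\bcirc(\mathcal M)\unfold(\mathcal N)\rangle
=\langle \unfold(\mathcal E^t),\,(I-P)R^*\bcirc(\mathcal M)\unfold(\mathcal N)\rangle
\]
vanishes. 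Expanding $\|\mathcal E^{t+1}\|_F^2$ then gives the stated sum of squares.

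The one step that needs care is the algebraic one: verifying that the tensor identities (the $*$-homomorphism property of $\bcirc$, and the fact that the inverse of the tube $\tilde{\mathcal A}_i\tilde{\mathcal A}_i^*$ corresponds to the inverse of its block-circulant matrix) make $\tilde{\mathcal P}_{i_t}$ a genuine self-adjoint idempotent. If the block-circulant route is awkward, an alternative is to diagonalize by the DFT, $\bcirc(\mathcal A)=(F^*\otimes I)\,\blkdiag(\hat A^{(k)})\,(F\otimes I)$, and check orthogonality slice-wise. The rest is direct substitution.
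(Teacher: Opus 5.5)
Your proof is correct and follows the paper's argument. It uses the same residual splitting into $\tilde{\mathcal A}_{i_t::}\mathcal E^t+\mathcal N_{i_t}$, the same error update, and orthogonality of the two summands via $(I-P)R^*=0$, followed by the Pythagorean theorem. The only difference is where you check orthogonality: you use the single Hermitian idempotent $\bcirc(\tilde{\mathcal P}_{i_t})$ together with the norm-preserving $\unfold$, whereas the paper (your suggested alternative) checks it slice by slice for the Fourier-domain projectors $P_{i_t}^{(k)}$ and reassembles the norms with Parseval, which is a unitarily equivalent computation.
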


Lemma~\ref{lem:orthogonality} is proved in Appendix~\ref{sec:orthogonality}.
Lemma~\ref{lem:orthogonality} reveals the one-step structure of the error in the doubly noisy setting. 
The updated error decomposes orthogonally into two components. 
One is the contracted previous error, and the other is a newly injected noise term. 
As a result, the squared error is the exact sum of these two contributions. This decomposition identifies the basic mechanism by which persistent perturbations can limit the attainable accuracy of standard TRK.

Building upon the orthogonal decomposition established in Lemma~\ref{lem:orthogonality}, our next goal is to quantify the expected one-step behavior of the error. By taking the expectation over the randomized row selection, the following theorem provides an upper bound on the expected error at the next iteration. The resulting recursion reflects the competition between the contraction of the previous error and the persistent perturbation term, while also accounting for the frequency imbalance of the observed row slices.

\begin{theorem} \label{thm:upper_recursion}
Let $\tilde{\mathcal{P}}_{i_t} = \tilde{\mathcal{A}}_{i_t::}^* (\tilde{\mathcal{A}}_{i_t::}\tilde{\mathcal{A}}_{i_t::}^*)^{-1} \tilde{\mathcal{A}}_{i_t::}$ be the orthogonal projection operator onto the tensor subspace associated with the $i_t$-th row slice of the observed tensor $\tilde{\mathcal{A}}$. 
Assume that Assumption~\ref{assump:invertibility} holds for the observed tensor $\tilde{\mathcal{A}}$, and that the block-circulant matrix $\bcirc(\tilde{\mathcal{A}})$ has full column rank.  
Let
\begin{equation}\label{eq:posindex}
\mathcal I_+
:=
\left\{
i\in[n_1]:\|\tilde{\mathcal A}_{i::}\|_F>0
\right\}.
\end{equation}
For each \(i\in\mathcal I_+\), define
\[
\gamma_i
=
\frac{\max\limits_{1 \le \ell \le n_3} \|\hat{\tilde{A}}_{i:}^{(\ell)}\|_F^2}
{\min\limits_{1 \le \ell \le n_3} \|\hat{\tilde{A}}_{i:}^{(\ell)}\|_F^2},
\qquad
\gamma = \max_{i\in\mathcal I_+} \gamma_i.
\]
Here, we use the Fourier-domain notation introduced in Section~\ref{sec:pre}.
Since Assumption~\ref{assump:invertibility} ensures that every row slice sampled with positive probability has nonzero Fourier row norm in every frequency, \(\gamma_i\) is well-defined, and \(\gamma \ge 1\) measures the maximum frequency imbalance across the Fourier slices of the observed row tensors that can be sampled with positive probability.
If $i_t$ is sampled independently at each iteration $t$ according to the distribution $p_i = \|\tilde{\mathcal{A}}_{i::}\|_F^2 / \|\tilde{\mathcal{A}}\|_F^2$ for $i \in [n_1]$, then the expected error of the TRK algorithm under the doubly noisy model satisfies
\begin{equation*}
\mathbb{E}\left[ \|\mathcal{E}^{t+1}\|_F^2 \right]
\le
\left(
1-
\frac{\sigma_{\min}^2(\bcirc(\tilde{\mathcal{A}}))}
{n_3 \|\tilde{\mathcal{A}}\|_F^2}
\right)
\mathbb{E}\left[ \|\mathcal{E}^t\|_F^2 \right]
+
\frac{\gamma \|\mathcal E_A \mathcal{X}^\star - \mathcal E_B\|_F^2}{\|\tilde{\mathcal{A}}\|_F^2}.
\end{equation*}
\end{theorem}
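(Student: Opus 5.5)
We will start from the orthogonal decomposition of Lemma~\ref{lem:orthogonality} and take the conditional expectation over \(i_t\), given \(\mathcal X^t\), term by term. The first term is exactly the quantity that appears in the noiseless analysis of Ma and Molitor, now applied to \(\tilde{\mathcal A}\). Since \(\tilde{\mathcal P}_{i}\) is an orthogonal projection, \(\|(\mathcal I-\tilde{\mathcal P}_i)\mathcal E^t\|_F^2=\|\mathcal E^t\|_F^2-\|\tilde{\mathcal P}_i\mathcal E^t\|_F^2\). We then rerun the argument behind Theorem~\ref{thm:standard_convergence} with \(\tilde{\mathcal A}\) in place of \(\mathcal A\). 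It bounds \(\sum_i p_i\|\tilde{\mathcal P}_i\mathcal E^t\|_F^2\) below by \(\sigma_{\min}^2(\bcirc(\tilde{\mathcal A}))\|\mathcal E^t\|_F^2/(n_3\|\tilde{\mathcal A}\|_F^2)\), and this step uses the full column rank of \(\bcirc(\tilde{\mathcal A})\). Nothing in that argument used consistency, only the projection structure and the sampling probabilities. Taking total expectation then produces the contraction term.

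The new work is the noise term. Write \(r_i:=(\mathcal E_A)_{i::}\mathcal X^\star-(\mathcal E_B)_{i::}\), which is the \(i\)-th row slice of \(\mathcal R:=\mathcal E_A\mathcal X^\star-\mathcal E_B\). We pass to the Fourier domain, where the t-product becomes slice-wise multiplication and \(\|\cdot\|_F^2=\frac1{n_3}\sum_\ell\|\hat{\cdot}^{(\ell)}\|_F^2\). The \(\ell\)-th slice of \(\tilde{\mathcal A}_{i::}^*(\tilde{\mathcal A}_{i::}\tilde{\mathcal A}_{i::}^*)^{-1}r_i\) is then \((\hat{\tilde A}^{(\ell)}_{i:})^*\,\hat r_i^{(\ell)}/\|\hat{\tilde A}^{(\ell)}_{i:}\|^2\), where \(\hat r_i^{(\ell)}\in\mathbb C^{1\times n_4}\) and the tube inverse is a scalar in each frequency. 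Its squared Frobenius norm is \(\|\hat r_i^{(\ell)}\|^2/\|\hat{\tilde A}^{(\ell)}_{i:}\|^2\), because a column vector times a row vector has norm equal to the product of the norms. This gives
\[
\|\text{noise}_i\|_F^2=\frac1{n_3}\sum_{\ell}\frac{\|\hat r_i^{(\ell)}\|^2}{\|\hat{\tilde A}^{(\ell)}_{i:}\|^2}\le\frac{\|r_i\|_F^2}{\min_\ell\|\hat{\tilde A}^{(\ell)}_{i:}\|^2}.
\]

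We then relate the minimum to \(\|\tilde{\mathcal A}_{i::}\|_F^2\). Parseval gives \(n_3\|\tilde{\mathcal A}_{i::}\|_F^2=\sum_\ell\|\hat{\tilde A}^{(\ell)}_{i:}\|^2\le n_3\max_\ell\|\hat{\tilde A}^{(\ell)}_{i:}\|^2\le n_3\gamma_i\min_\ell\|\hat{\tilde A}^{(\ell)}_{i:}\|^2\). Hence \(\min_\ell\|\hat{\tilde A}^{(\ell)}_{i:}\|^2\ge\|\tilde{\mathcal A}_{i::}\|_F^2/\gamma_i\). Averaging with \(p_i=\|\tilde{\mathcal A}_{i::}\|_F^2/\|\tilde{\mathcal A}\|_F^2\), which is positive only on \(\mathcal I_+\), yields
\[
\sum_{i\in\mathcal I_+}p_i\,\gamma_i\frac{\|r_i\|_F^2}{\|\tilde{\mathcal A}_{i::}\|_F^2}\le\frac{\gamma}{\|\tilde{\mathcal A}\|_F^2}\sum_i\|r_i\|_F^2=\frac{\gamma\|\mathcal R\|_F^2}{\|\tilde{\mathcal A}\|_F^2}.
\]
This noise term is deterministic, so it passes through the total expectation unchanged.

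The main obstacle is getting the Fourier bookkeeping right, in particular the normalization convention of the FFT relative to the Frobenius norm, so that no stray factor of \(n_3\) appears. A related concern is verifying that the row slice \(\hat r_i^{(\ell)}\) of the Fourier transform of \(\mathcal R\) really is the product of Fourier slices. Working with \(\bcirc\) block-diagonalized by \((F\otimes I)\) and consistently using \(\|\bcirc(\cdot)\|_F^2=n_3\|\cdot\|_F^2\) should settle both points. Combining the two expected terms gives the stated recursion.
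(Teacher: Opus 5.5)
Your proposal is correct and follows the paper's proof: you take the expectation of the Lemma's orthogonal decomposition, treat the contraction term with the Pythagorean identity plus the bound $\sum_i p_i\|\tilde{\mathcal P}_i\mathcal E^t\|_F^2\ge \sigma_{\min}^2(\bcirc(\tilde{\mathcal A}))\|\mathcal E^t\|_F^2/(n_3\|\tilde{\mathcal A}\|_F^2)$, and treat the noise term with the Fourier-domain outer-product identity together with $\min_\ell\|\hat{\tilde A}^{(\ell)}_{i:}\|_F^2\ge\|\tilde{\mathcal A}_{i::}\|_F^2/\gamma_i$. The only difference is presentational: the paper writes out the contraction step explicitly, using $\|\hat{\tilde A}_{i:}^{(k)}\|_F^2\le n_3\|\tilde{\mathcal A}_{i::}\|_F^2$, rather than invoking the noiseless argument, which, as you correctly note, never uses consistency.
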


Theorem~\ref{thm:upper_recursion} is proved in Appendix~\ref{sec:upper_recursion}.
This result shows that the expected TRK error is no longer governed by a purely contractive recursion under the doubly noisy model. 
Instead, each iteration combines a contraction term controlled by the tensor scaling
\(n_3\|\tilde{\mathcal A}\|_F^2\) and a persistent additive perturbation term, which is
amplified by the frequency imbalance constant \(\gamma\). In the noiseless limit
\(\mathcal E_A=\mathcal E_B=0\), the recursion reduces exactly to the standard TRK rate in
Theorem~\ref{thm:standard_convergence}. This observation motivates introducing the
spectrally damped TRK method.

\section{Spectrally Damped TRK (SD-TRK)}\label{sec:sdtrk}

In Section~\ref{sec:error}, we showed that standard TRK~\eqref{eq:TRK} does not necessarily converge to the exact solution under the doubly noisy model. 
To address this limitation, we introduce a spectrally damped TRK method for
doubly noisy tensor systems. The analysis below is stated for fixed perturbation
tensors, while the numerical experiments use Gaussian perturbations.

To address the instability inherent in the doubly noisy setting, we incorporate both a relaxation parameter and spectral damping to enhance stability.
A relaxation parameter controls the step size of each update. In noisy or inconsistent settings, smaller values are often used to obtain more stable iterations \cite{Censor1983,Nikazad2024}.

Furthermore, perturbations in the system tensor $\tilde{\mathcal{A}}$ can make the row-slice normalization term $\tilde{\mathcal{A}}_{i_t::}\tilde{\mathcal{A}}_{i_t::}^{*}$ in the standard TRK update~\eqref{eq:TRK} poorly scaled.
In particular, small row-slice normalization values can lead to large reciprocal factors in its inverse, which may amplify perturbations in the update~\eqref{eq:TRK}.
Therefore, we apply a Tikhonov-type regularization to the inverse normalization term \cite{Hansen1998,Tikhonov1963}. 
This replaces the inverse normalization factor with
\begin{equation}\label{eq:tikhonov}
\left(
\tilde{\mathcal{A}}_{i_t::}\tilde{\mathcal{A}}_{i_t::}^{*}
+
\lambda\mathcal{I}
\right)^{-1},
\end{equation}
where $\lambda\ge 0$ is the damping parameter.
This regularized inverse damps the effect of small normalization values rather than inverting them without regularization.
In the Fourier domain, this regularization can be interpreted as a form of spectral damping, since it reduces the influence of ill-conditioned frequency components \cite{Chung2011,Hansen2006}.

This viewpoint also motivates the slice-dependent implementation used later:
different Fourier slices can have different conditioning, so a single global damping
level may not reflect the local scale of every spectral component.

By incorporating a relaxation parameter and spectral damping into the standard TRK iteration~\eqref{eq:TRK}, we propose
the Spectrally Damped TRK (SD-TRK) update as follows.
\begin{equation}\label{eq:SDTRK}
\mathcal{X}^{t+1}
=
\mathcal{X}^t
-
\omega \tilde{\mathcal{A}}_{i_t::}^*
\left(
\tilde{\mathcal{A}}_{i_t::}\tilde{\mathcal{A}}_{i_t::}^*
+
\lambda \mathcal{I}
\right)^{-1}
\left(
\tilde{\mathcal{A}}_{i_t::}\mathcal{X}^t
-
\tilde{\mathcal{B}}_{i_t::}
\right),
\end{equation}
where $\omega \in (0,2)$ is the relaxation parameter controlling the step size and $\lambda \ge 0$ is the damping parameter that regularizes the inverse normalization term.
When $\omega = 1$ and $\lambda = 0$, the update reduces to the standard TRK iteration \eqref{eq:TRK}.
Thus, SD-TRK can be viewed as a generalization of the standard TRK method.

The role of the damping parameter $\lambda$ is more explicit in the implementation. 
Since the $t$-product is evaluated by applying the FFT along the third dimension, the SD-TRK update \eqref{eq:SDTRK} is computed slice by slice in the Fourier domain. 
Recall that $\hat{\tilde A}^{(k)}$ denotes the $k$-th Fourier frontal slice of $\tilde{\mathcal{A}}$, and $\hat{\tilde A}_{i_t:}^{(k)}$ denotes its $i_t$-th row. 
For this row, the inverse factor in the $k$-th slice is
\[
\left(
\hat{\tilde A}_{i_t:}^{(k)}
\hat{\tilde A}_{i_t:}^{(k)*}
+
\lambda I
\right)^{-1}.
\]
The addition of $\lambda I$ prevents small row-slice energies \(\|\hat{\tilde A}_{i_t:}^{(k)}\|_F^2\) from producing large reciprocal factors.
Thus, the damping step is analogous to Tikhonov-type regularization within the $t$-product framework \cite{KilmerMartin2011,Tikhonov1963}, and it makes the row update less sensitive to weakly scaled Fourier row slices.

\subsection{Error analysis}\label{subsec:sdtrkerror}

We now derive the error analysis for SD-TRK under the doubly noisy model \eqref{eq:doublytls}. 
Recall that the error tensor is defined by
\[
\mathcal{E}^t := \mathcal{X}^t - \mathcal{X}^\star,
\]
where $\mathcal{X}^\star$ is the exact solution of the noiseless system \eqref{eq:t_linearsystem}.

\begin{lemma}
\label{lem:sdtrk_error_update}
If $\lambda=0$, assume that Assumption~\ref{assump:invertibility} holds for the observed tensor $\tilde{\mathcal{A}}$.
Let
\begin{equation}\label{eq:Qit}
\tilde{\mathcal{Q}}_{i_t}^{\lambda}
:=
\tilde{\mathcal{A}}_{i_t::}^*
\left(
\tilde{\mathcal{A}}_{i_t::}\tilde{\mathcal{A}}_{i_t::}^*
+
\lambda \mathcal{I}
\right)^{-1}
\tilde{\mathcal{A}}_{i_t::},
\end{equation}
where $i_t \in [n_1]$ is the index of the horizontal slice selected at iteration $t$,
and $\lambda \ge 0$ is the damping parameter.
Then the SD-TRK error satisfies
\[
\mathcal{E}^{t+1}
=
(\mathcal{I} - \omega \tilde{\mathcal{Q}}_{i_t}^{\lambda})\mathcal{E}^t
-
\omega
\tilde{\mathcal{A}}_{i_t::}^*
\left(
\tilde{\mathcal{A}}_{i_t::}\tilde{\mathcal{A}}_{i_t::}^*
+
\lambda \mathcal{I}
\right)^{-1}
\mathcal{N}_{i_t},
\]
where
\[
\mathcal{N}_{i_t}
=
(\mathcal E_A)_{i_t::} \mathcal{X}^\star - (\mathcal E_B)_{i_t::}.
\]
\end{lemma}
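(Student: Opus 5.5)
The identity is purely algebraic, so I will derive it directly from the SD-TRK update \eqref{eq:SDTRK} by subtracting $\mathcal X^\star$ from both sides and rewriting the observed residual in terms of the error. Write $\mathcal M_{i_t}^\lambda := (\tilde{\mathcal A}_{i_t::}\tilde{\mathcal A}_{i_t::}^* + \lambda\mathcal I)^{-1}$.

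\textbf{Step 1: well-definedness of $\mathcal M_{i_t}^\lambda$.} If $\lambda=0$ this is exactly Assumption~\ref{assump:invertibility} for $\tilde{\mathcal A}$. If $\lambda>0$, I will pass to the Fourier domain. The tube $\tilde{\mathcal A}_{i_t::}\tilde{\mathcal A}_{i_t::}^*$ has $k$-th Fourier slice $\hat{\tilde A}^{(k)}_{i_t:}\hat{\tilde A}^{(k)*}_{i_t:}=\|\hat{\tilde A}^{(k)}_{i_t:}\|^2\ge 0$. Adding $\lambda\mathcal I$ adds $\lambda$ to every Fourier slice, since the Fourier transform of $\mathcal I$ has all slices equal to the identity. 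Hence each slice is at least $\lambda>0$, and the tube is invertible, because t-product invertibility is equivalent to invertibility of every Fourier slice. I expect this to be the only step needing care, and it is mild.

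\textbf{Step 2: rewrite the residual.} Using $\mathcal X^t=\mathcal E^t+\mathcal X^\star$, $\tilde{\mathcal A}=\mathcal A+\mathcal E_A$, $\tilde{\mathcal B}=\mathcal B+\mathcal E_B$, and $\mathcal A_{i_t::}\mathcal X^\star=\mathcal B_{i_t::}$ (the $i_t$-th row slice of \eqref{eq:t_linearsystem}), I get
\[
\tilde{\mathcal A}_{i_t::}\mathcal X^t-\tilde{\mathcal B}_{i_t::}
=\tilde{\mathcal A}_{i_t::}\mathcal E^t+\mathcal A_{i_t::}\mathcal X^\star+(\mathcal E_A)_{i_t::}\mathcal X^\star-\mathcal B_{i_t::}-(\mathcal E_B)_{i_t::}
=\tilde{\mathcal A}_{i_t::}\mathcal E^t+\mathcal N_{i_t}.
\]
This uses bilinearity of the t-product, and the fact that row slices of a t-product are t-products of row slices, $(\mathcal{AX})_{i::}=\mathcal A_{i::}\mathcal X$. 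The latter is immediate from the block-circulant definition.

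\textbf{Step 3: assemble.} Subtracting $\mathcal X^\star$ from \eqref{eq:SDTRK} and inserting Step 2 gives
\[
\mathcal E^{t+1}=\mathcal E^t-\omega\tilde{\mathcal A}_{i_t::}^*\mathcal M_{i_t}^\lambda\tilde{\mathcal A}_{i_t::}\mathcal E^t-\omega\tilde{\mathcal A}_{i_t::}^*\mathcal M_{i_t}^\lambda\mathcal N_{i_t}.
\]
By associativity of the t-product, the middle term is $\omega\tilde{\mathcal Q}^\lambda_{i_t}\mathcal E^t$ with $\tilde{\mathcal Q}^\lambda_{i_t}$ as in \eqref{eq:Qit}. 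Writing $\mathcal E^t=\mathcal I\mathcal E^t$ and factoring then yields the claimed formula. Here $\mathcal I$ is the $n_2\times n_2\times n_3$ identity.
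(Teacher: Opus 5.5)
Your proposal is correct and follows essentially the same route as the paper. Both substitute $\mathcal X^t=\mathcal E^t+\mathcal X^\star$, use $\mathcal A_{i_t::}\mathcal X^\star=\mathcal B_{i_t::}$ to rewrite the residual as $\tilde{\mathcal A}_{i_t::}\mathcal E^t+\mathcal N_{i_t}$, and plug this into \eqref{eq:SDTRK}. Your Step~1 is a valid extra detail that the paper leaves implicit: for $\lambda>0$ the damped tube is invertible because each of its Fourier slices equals $\|\hat{\tilde A}^{(k)}_{i_t:}\|^2+\lambda\ge\lambda>0$.
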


\begin{proof}
By substituting $\mathcal{X}^t = \mathcal{E}^t + \mathcal{X}^\star$ into the residual and using $\mathcal{A}_{i_t::}\mathcal{X}^\star - \mathcal{B}_{i_t::} = 0$, the same computation as in the proof of Lemma~\ref{lem:orthogonality} gives
\[
\tilde{\mathcal{A}}_{i_t::}\mathcal{X}^t - \tilde{\mathcal{B}}_{i_t::}
=
\tilde{\mathcal{A}}_{i_t::}\mathcal{E}^t + \mathcal{N}_{i_t},
\qquad
\mathcal{N}_{i_t}
=
(\mathcal E_A)_{i_t::} \mathcal{X}^\star - (\mathcal E_B)_{i_t::}.
\]
Substituting this into the SD-TRK update rule \eqref{eq:SDTRK} and using $\mathcal{X}^t = \mathcal{E}^t + \mathcal{X}^\star$ again, we obtain
\[
\mathcal{E}^{t+1}
=
\mathcal{E}^t
-
\omega \tilde{\mathcal{A}}_{i_t::}^*
\left(
\tilde{\mathcal{A}}_{i_t::}\tilde{\mathcal{A}}_{i_t::}^* + \lambda \mathcal{I}
\right)^{-1}
\left(
\tilde{\mathcal{A}}_{i_t::}\mathcal{E}^t + \mathcal{N}_{i_t}
\right),
\]
which is the claimed identity.
\end{proof}

Lemma~\ref{lem:sdtrk_error_update} shows that the SD-TRK error consists of a regularized error reduction term and a noise-dependent perturbation term. 
When \(\lambda>0\), unlike standard TRK \eqref{eq:TRK}, the operator \(\tilde{\mathcal{Q}}_{i_t}^{\lambda}\) is no longer an exact orthogonal projector, since the regularization modifies the projection step. 
By replacing
\[
\left(\tilde{\mathcal{A}}_{i_t::}\tilde{\mathcal{A}}_{i_t::}^*\right)^{-1}
\quad\text{with}\quad
\left(\tilde{\mathcal{A}}_{i_t::}\tilde{\mathcal{A}}_{i_t::}^*+\lambda\mathcal{I}\right)^{-1},
\]
the update \eqref{eq:SDTRK} becomes less sensitive to weakly scaled row-slice normalization values associated with the selected row slice $\tilde{\mathcal{A}}_{i_t::}$. 
We next derive an expected recursion that captures the trade-off between error reduction and noise perturbation.

\begin{theorem}
\label{thm:sdtrk_recursion}
Assume that Assumption~\ref{assump:invertibility} holds for the observed tensor
\(\tilde{\mathcal A}\), and that the block-circulant matrix
\(\bcirc(\tilde{\mathcal A})\) has full column rank.
Let \(\omega\in(0,2)\), \(\lambda\ge0\), and suppose that \(i_t\) is sampled independently according to
\begin{equation}\label{eq:sampleprob}
p_i=\frac{\|\tilde{\mathcal A}_{i::}\|_F^2}{\|\tilde{\mathcal A}\|_F^2},
\qquad i\in[n_1].
\end{equation}
For any \(\alpha>0\), the expected SD-TRK error satisfies
\begin{equation}\label{eq:sdtrkerror}
\mathbb E\!\left[\|\mathcal E^{t+1}\|_F^2\right]
\le
\rho_\alpha(\omega,\lambda)\,
\mathbb E\!\left[\|\mathcal E^t\|_F^2\right]
+
\eta_\alpha(\omega,\lambda)\,
\|\mathcal E_A\mathcal X^\star-\mathcal E_B\|_F^2,
\end{equation}
where
\[
\underline p:=\min_{i\in[n_1]:\,p_i>0} p_i,
\]
\[
\rho_\alpha(\omega,\lambda)
:=
(1+\alpha)\left(
1-
\underline p\,
\frac{(2\omega-\omega^2)\sigma_{\min}^2(\bcirc(\tilde{\mathcal A}))}
{\sigma_{\max}^2(\bcirc(\tilde{\mathcal A}))+\lambda}
\right),
\]
and
\[
\eta_\alpha(\omega,\lambda)
:=
\left(1+\frac{1}{\alpha}\right)\eta(\omega,\lambda),
\]
where
\begin{equation}\label{eq:eta}
\eta(\omega,\lambda)
:=
n_3\,
\omega^2
\max_{i\in\mathcal I_+}
\left\|
\tilde{\mathcal A}_{i::}^*
(
\tilde{\mathcal A}_{i::}\tilde{\mathcal A}_{i::}^*
+
\lambda \mathcal I
)^{-1}
\right\|_F^2
\end{equation}
for $\mathcal I_+$ defined as in \eqref{eq:posindex}.
\end{theorem}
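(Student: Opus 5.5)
Starting from the identity of Lemma~\ref{lem:sdtrk_error_update}, we write $\mathcal E^{t+1}=\mathcal U_t-\mathcal V_t$ with
\[
\mathcal U_t:=(\mathcal I-\omega\tilde{\mathcal Q}^{\lambda}_{i_t})\mathcal E^t,\qquad
\mathcal V_t:=\omega\tilde{\mathcal A}_{i_t::}^*(\tilde{\mathcal A}_{i_t::}\tilde{\mathcal A}_{i_t::}^*+\lambda\mathcal I)^{-1}\mathcal N_{i_t}.
\]
For $\lambda>0$ we lose the exact orthogonality used in Lemma~\ref{lem:orthogonality}. We therefore apply Young's inequality $\|\mathcal U-\mathcal V\|_F^2\le(1+\alpha)\|\mathcal U\|_F^2+(1+1/\alpha)\|\mathcal V\|_F^2$, which holds for every $\alpha>0$. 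This inequality accounts for the factors $(1+\alpha)$ and $(1+1/\alpha)$ in $\rho_\alpha$ and $\eta_\alpha$. We then condition on $\mathcal E^t$, take the expectation over $i_t$, and use the tower property.

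\emph{Noise term.} We bound $\|\mathcal V_t\|_F^2$ by passing to $\bcirc$: $\|\mathcal M\mathcal N\|_F^2=\frac1{n_3}\|\bcirc(\mathcal M)\unfold(\mathcal N)\|_F^2$ up to the paper's normalization. Using $\|\bcirc(\mathcal M)\|_2^2\le\|\bcirc(\mathcal M)\|_F^2=n_3\|\mathcal M\|_F^2$, we get
\[
\|\mathcal V_t\|_F^2\le n_3\,\omega^2\|\tilde{\mathcal A}_{i_t::}^*(\cdots+\lambda\mathcal I)^{-1}\|_F^2\,\|\mathcal N_{i_t}\|_F^2 .
\]
We bound the operator factor by its maximum over $\mathcal I_+$, which is legitimate since only indices with $p_i>0$ are sampled. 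We then use $\mathbb E\|\mathcal N_{i_t}\|_F^2=\sum_i p_i\|\mathcal N_i\|_F^2\le\sum_i\|\mathcal N_i\|_F^2=\|\mathcal E_A\mathcal X^\star-\mathcal E_B\|_F^2$. The last equality holds because the row slices of $\mathcal E_A\mathcal X^\star-\mathcal E_B$ are exactly the $\mathcal N_i$. This yields $\eta(\omega,\lambda)$.

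\emph{Contraction term.} We work in the Fourier domain, where $\bcirc$ block-diagonalizes. There $\tilde{\mathcal Q}^\lambda_i$ corresponds, on slice $k$, to the Hermitian matrix $Q_k=a_k^*a_k/(\|a_k\|^2+\lambda)$, with $a_k=\hat{\tilde A}^{(k)}_{i:}$. Its eigenvalues lie in $[0,1]$. Hence $(I-\omega Q_k)^*(I-\omega Q_k)=I-(2\omega Q_k-\omega^2Q_k^2)\preceq I-(2\omega-\omega^2)Q_k$, since $Q_k^2\preceq Q_k$ and $\omega\in(0,2)$. Moreover $Q_k\succeq a_k^*a_k/(\sigma_{\max}^2(\bcirc(\tilde{\mathcal A}))+\lambda)$, because $\|a_k\|^2\le\|\hat{\tilde A}^{(k)}\|_2^2\le\sigma^2_{\max}(\bcirc(\tilde{\mathcal A}))$. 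Consequently
\[
\mathbb E\|\mathcal U_t\|_F^2\le\|\mathcal E^t\|_F^2-\frac{2\omega-\omega^2}{\sigma_{\max}^2+\lambda}\sum_i p_i\,\|\tilde{\mathcal A}_{i::}\mathcal E^t\|_F^2 .
\]
We then use $p_i\ge\underline p$ for the indices in $\mathcal I_+$; rows outside $\mathcal I_+$ contribute zero anyway. This gives $\sum_i p_i\|\tilde{\mathcal A}_{i::}\mathcal E^t\|_F^2\ge\underline p\|\tilde{\mathcal A}\mathcal E^t\|_F^2$. Full column rank of $\bcirc(\tilde{\mathcal A})$ then gives $\|\tilde{\mathcal A}\mathcal E^t\|_F^2\ge\sigma_{\min}^2(\bcirc(\tilde{\mathcal A}))\|\mathcal E^t\|_F^2$. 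Combining these yields the factor in $\rho_\alpha$.

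The main obstacle is keeping the Frobenius-norm scalings consistent between tensors, $\bcirc$, and the Fourier slices. In particular, we must check that the $n_3$ factors cancel in the contraction step, so that no $n_3$ appears in $\rho_\alpha$, while one factor of $n_3$ survives in $\eta$. The key relations are $\|\mathcal X\|_F=\|\unfold(\mathcal X)\|_F$ and $\|\bcirc(\mathcal A)\unfold(\mathcal X)\|_F=\|\mathcal A\mathcal X\|_F$ (indeed $\unfold(\mathcal A\mathcal X)=\bcirc(\mathcal A)\unfold(\mathcal X)$). With these, the contraction step can be done entirely in the unfolded picture using the operator $\bcirc(\tilde{\mathcal Q}^\lambda_i)$ with no $n_3$ loss. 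Only the crude bound $\|\bcirc(\cdot)\|_2^2\le n_3\|\cdot\|_F^2$ in the noise step introduces $n_3$.
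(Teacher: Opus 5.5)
Your proposal is correct and follows essentially the same route as the paper. You apply Young's inequality to the decomposition of Lemma~\ref{lem:sdtrk_error_update}, and you derive the slice-wise bound $\|(I-\omega Q)x\|_F^2\le\|x\|_F^2-\frac{2\omega-\omega^2}{\sigma_{\max}^2(\bcirc(\tilde{\mathcal A}))+\lambda}\|ax\|_F^2$. You reach this bound a little more directly via $Q^2\preceq Q$, where the paper uses a monotonicity argument for $c(s)$. You then use $p_i\ge\underline p$ and $\sigma_{\min}$, and obtain the noise bound defining $\eta(\omega,\lambda)$ through $\|\bcirc(\cdot)\|_2^2\le n_3\|\cdot\|_F^2$, where the paper uses a slice-wise product-of-sums estimate.

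One slip needs correcting. The identity $\|\mathcal M\mathcal N\|_F^2=\frac{1}{n_3}\|\bcirc(\mathcal M)\unfold(\mathcal N)\|_F^2$ in your noise step is false. The correct relation, which you state yourself in your final paragraph, is $\|\mathcal M\mathcal N\|_F=\|\bcirc(\mathcal M)\unfold(\mathcal N)\|_F$. That identity, combined with $\|\bcirc(\mathcal M)\|_2^2\le n_3\|\mathcal M\|_F^2$, is what produces the factor $n_3$ you display.
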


Theorem~\ref{thm:sdtrk_recursion} is proved in
Appendix~\ref{sec:sdtrk_recursion}. The recursion \eqref{eq:sdtrkerror} separates
the expected error into a contraction-type term and a persistent perturbation term.
If \(\rho_\alpha(\omega,\lambda)<1\), it gives contraction up to a noise-dependent
error floor. If this condition fails, the bound still describes the effect of
damping and relaxation, but it does not by itself imply decay or identify the
limiting point of the iterates.

The factor \(\rho_\alpha(\omega,\lambda)\) controls error propagation, whereas
\(\eta_\alpha(\omega,\lambda)\) controls noise injection. Underrelaxation and
stronger damping can reduce noise amplification at the cost of a weaker contraction
bound. Hence the theorem captures a speed--robustness trade-off. The parameter
\(\alpha\) comes from Young's inequality and is not part of the algorithm.

The dependence on \(\underline p\) is a worst-case feature of the proof. If the
row-sampling distribution is highly imbalanced, \(\underline p\) may be very small,
so the contraction factor can be pessimistic. The theorem should therefore be read
as identifying the contraction--perturbation structure and the roles of
\(\omega\) and \(\lambda\), rather than as a sharp predictor of the empirical
convergence rate.

Since the observed system \eqref{eq:doublytls} is generally inconsistent, we next
compare the iterates with the least-squares solution of the observed system.

We denote by
\begin{equation}\label{eq:LSsol}
\mathcal{X}^{LS}:=\tilde{\mathcal{A}}^\dagger \tilde{\mathcal{B}}
\end{equation}
the least-squares solution associated with the observed system \eqref{eq:doublytls},
where \(\tilde{\mathcal{A}}^\dagger\) denotes the \(t\)-pseudoinverse of
\(\tilde{\mathcal{A}}\) \cite{KilmerMartin2011}. Equivalently,
\[
\unfold(\mathcal X^{LS})
=
\bcirc(\tilde{\mathcal A})^\dagger
\unfold(\tilde{\mathcal B}).
\]

As a baseline motivated by the doubly noisy matrix Kaczmarz analysis in
\cite{BBDLM2024}, we introduce the following matrix-inspired reference scale for
interpreting the error relative to the least-squares solution:
\begin{equation}\label{eq:lsbound}
\mathcal H_{\mathrm{LS}}(t)
:=
\left(1-\frac{1}{\tilde{R}}\right)^t
\|\mathcal{X}^0-\mathcal{X}^{LS}\|_F^2
+
\frac{\|\mathcal E_A\mathcal{X}^{LS}-\mathcal E_B\|_F^2}
{\sigma_{\min}^2(\bcirc(\tilde{\mathcal{A}}))}.
\end{equation}
This quantity is used only as a comparison scale and should not be read as a convergence theorem for the slice-dependent Fourier-domain implementation in Algorithm~\ref{alg:sdtrk}. Here
\[
\tilde R
:=
\|\bcirc(\tilde{\mathcal A})^\dagger\|_2^2
\|\bcirc(\tilde{\mathcal A})\|_F^2
=
n_3
\|\bcirc(\tilde{\mathcal A})^\dagger\|_2^2
\|\tilde{\mathcal A}\|_F^2.
\]

The reference scale \eqref{eq:lsbound} illustrates how small singular values of \(\bcirc(\tilde{\mathcal A})\) can affect a noisy Kaczmarz-type iteration in two
ways. They can enlarge
\(\tilde R\), thereby slowing the decay of the transient term, and they can also
increase the perturbation-dependent error horizon through the factor
\(\sigma_{\min}^{-2}(\bcirc(\tilde{\mathcal A}))\). Thus, small singular values can
simultaneously slow convergence and amplify the asymptotic effect of noise.

The SD-TRK update addresses this mechanism directly at the Fourier-slice level.
In the \(k\)-th Fourier slice, the normalization factor is replaced by
\[
\left(
\hat{\tilde A}_{i_t:}^{(k)}
\hat{\tilde A}_{i_t:}^{(k)*}
+
\lambda^{(k)}I
\right)^{-1},
\]
so the damping parameter \(\lambda^{(k)}\) reduces the effect of directions
associated with small row-slice energies. This spectral damping is not uniformly
beneficial: increasing \(\lambda^{(k)}\) may improve robustness to noisy
ill-conditioned slices, but it can also slow the transient error decay. Thus,
SD-TRK introduces a trade-off between asymptotic robustness and convergence speed.

Theorem~\ref{thm:sdtrk_recursion} provides a scalar-parameter error recursion,
while the Fourier-slice discussion above gives an implementation-level
interpretation of the corresponding damping mechanism. In
Section~\ref{sec:experiments}, we empirically examine this stabilization behavior
through multi-trial synthetic experiments, a noise-scaling comparison, and an image reconstruction example.

\subsection{Implementation}

In this subsection, we present the practical implementation of SD-TRK. 
As discussed in Appendix~\ref{sec:appendix_fft}, the t-product is implemented in the Fourier domain by performing matrix multiplication on each frontal slice.
Accordingly, the SD-TRK iteration is also implemented slice-wise in the Fourier domain.
Here we follow the Fourier-domain notation introduced in Section~\ref{sec:pre} 
and Algorithm~\ref{alg:t_product_fft}.
Thus, even for real-valued input tensors, the Fourier-domain variables in
Algorithm~\ref{alg:sdtrk} are generally complex-valued, and \(^{*}\) denotes the
conjugate transpose in the slice-wise updates.

For the theoretical analysis in Subsection~\ref{subsec:sdtrkerror}, we use a
uniform scalar damping parameter \(\lambda\) in order to obtain a tractable global
recursion. The practical algorithm applies the same damped update mechanism
slice-wise in the Fourier domain and therefore allows a frequency-dependent family
of damping parameters \(\{\lambda^{(k)}\}_{k=1}^{n_3}\).

This slice-dependent scheme should be viewed as an implementation-level
extension motivated by the scalar-parameter theory.
It is not a direct corollary of
Theorem~\ref{thm:sdtrk_recursion}, whose formal recursion is stated for a single
scalar damping parameter. In practice, allowing \(\lambda^{(k)}\) to vary across
slices provides additional flexibility in stabilizing ill-conditioned Fourier
components.
In the synthetic tensor experiments in Subsubsections~\ref{ex513}--\ref{ex516},
the slice-dependent damping parameters are chosen as
\begin{equation}\label{eq:lambda_scale_rule}
\lambda^{(k)}
=
\lambda_{\mathrm{floor}}
+
\lambda_{\mathrm{scale}}
(\delta_A^2+\delta_B^2)
\frac{\|\hat{\tilde A}^{(k)}\|_F^2}{n_1},
\qquad k=1,\ldots,n_3,
\end{equation}
where \(\lambda_{\mathrm{floor}}>0\) prevents degeneracy and
\(\lambda_{\mathrm{scale}}>0\) controls the overall damping strength.

The scaling in \eqref{eq:lambda_scale_rule} is intended to match the damping level
to both the noise magnitude and the local Fourier-slice scale. The factor
\(\delta_A^2+\delta_B^2\) reflects the combined perturbation level in the operator
and right-hand side. The quantity
\(\|\hat{\tilde A}^{(k)}\|_F^2/n_1\) is the average row-slice energy in the
\(k\)-th Fourier slice, so the damping parameter has the same scale as the
slice-wise normalization terms used in the update. The parameter
\(\lambda_{\mathrm{scale}}\) is therefore a user-chosen global multiplier rather
than an automatically selected optimal value. A systematic sensitivity analysis and
adaptive selection rule are left for future work.

When the perturbation levels \(\delta_A\) and \(\delta_B\) are not known in
advance, they should be replaced by estimated noise scales or absorbed into the
global multiplier \(\lambda_{\mathrm{scale}}\). Developing reliable data-driven
estimators for these quantities is beyond the scope of this paper.

In applications, this parameter should therefore be treated as a conservative
stabilization knob rather than as an automatically tuned quantity. Larger values
increase damping of weak spectral components, while smaller values keep the update
closer to standard TRK.

\begin{algorithm}[ht]
\caption{Spectrally Damped Tensor Randomized Kaczmarz (SD-TRK)}
\label{alg:sdtrk}
\begin{algorithmic}[1]
\Require Observed noisy tensors $\tilde{\mathcal{A}} \in \mathbb{R}^{n_1 \times n_2 \times n_3}$, $\tilde{\mathcal{B}} \in \mathbb{R}^{n_1 \times n_4 \times n_3}$, initial iterate $\mathcal{X}^0 \in \mathbb{R}^{n_2 \times n_4 \times n_3}$, maximum iteration count $T$, damping parameters $\{\lambda^{(k)}\}_{k=1}^{n_3}$ with $\lambda^{(k)} \ge 0$, relaxation parameter $\omega \in (0,2)$
\Ensure Approximate solution $\mathcal{X}^T$
\State Compute Fourier transforms:
\[
\hat{\tilde{\mathcal A}} = \fft(\tilde{\mathcal{A}}, [\,], 3), \qquad
\hat{\tilde{\mathcal B}} = \fft(\tilde{\mathcal{B}}, [\,], 3), \qquad
\hat{\mathcal X}^{0} = \fft(\mathcal{X}^{0}, [\,], 3)
\]
\For{$t=0,\dots,T-1$}
    \State Sample $i_t \in [n_1]$ with probability
    \[
    p_{i_t}=\frac{\|\tilde{\mathcal{A}}_{i_t::}\|_F^2}{\sum_{j=1}^{n_1}\|\tilde{\mathcal{A}}_{j::}\|_F^2}
    \]
    \For{$k=1,\dots,n_3$}
        \State Update the $k$-th frequency slice:
        \[
        \hat X^{t+1,(k)}
        =
        \hat X^{t,(k)}
        -
        \omega \hat{\tilde A}_{i_t:}^{(k)*}
        \left(
        \hat{\tilde A}_{i_t:}^{(k)}\hat{\tilde A}_{i_t:}^{(k)*}
        +\lambda^{(k)}I
        \right)^{-1}
        \left(
        \hat{\tilde A}_{i_t:}^{(k)}\hat X^{t,(k)}-\hat{\tilde B}_{i_t:}^{(k)}
        \right)
        \]
    \EndFor
\EndFor
\State Compute inverse Fourier transform:
\[
\mathcal{X}^T=\ifft(\hat{\mathcal X}^{T}, [\,], 3)
\]
\State \Return $\mathcal{X}^T$
\end{algorithmic}
\end{algorithm}

This frequency-dependent implementation allows the amount of damping to vary
across spectral components according to their local conditioning and noise levels.
In the following section, we present numerical experiments illustrating the
effectiveness of this damping strategy under doubly noisy perturbations.

\section{Numerical Experiments}
\label{sec:experiments}

This section presents numerical experiments related to the theoretical analysis and SD-TRK method.
All numerical experiments were performed on a MacBook Pro equipped with an Apple M3 Pro chip (12-core CPU) and 18 GB of unified memory, operating on macOS. 
The experiments were implemented in Python 3.13.3 using Jupyter notebooks.

The experiments are organized to address the following questions. 
First, Subsubsections~\ref{ex511}--\ref{ex512} illustrate the behavior of standard TRK in noiseless and doubly noisy settings, showing the transition from linear convergence to semi-convergence. 
Second, Subsubsections~\ref{ex513}--\ref{ex514} compare standard TRK and SD-TRK under well-conditioned and ill-conditioned noisy tensor systems, illustrating the stabilizing behavior of the spectrally damped update. 
Third, Subsubsection~\ref{ex515} uses a controlled known-target setting to separate the effect of the update mechanism from stochastic sampling variability. 
Fourth, Subsubsection~\ref{ex516} compares the empirical stabilization behavior of SD-TRK with a damped reference horizon. 
Finally, Subsection~\ref{sec:image_application} uses a two-pass image reconstruction
setting as a comparison between standard TRK and SD-TRK.

Throughout the synthetic experiments, we report three related but distinct
quantities. The true reconstruction error
\[
\|\mathcal X^t-\mathcal X^\star\|_F
\]
measures accuracy relative to the noiseless target and is available only in
controlled synthetic experiments. The observed residual
\[
\|\tilde{\mathcal A}\mathcal X^t-\tilde{\mathcal B}\|_F
\]
is the quantity available in practical noisy systems. When a least-squares
benchmark is used, we also report
\[
\|\mathcal X^t-\mathcal X^{LS}\|_F,
\qquad
\unfold(\mathcal X^{LS})
=
\bcirc(\tilde{\mathcal A})^\dagger
\unfold(\tilde{\mathcal B}).
\]
These quantities serve different purposes: the true error evaluates recovery
accuracy, the residual evaluates consistency with the observed noisy system, and
the least-squares error compares the iterate with the observed-system
least-squares solution.

Table~\ref{tab:experiment_summary} summarizes the main experimental settings used in the synthetic tensor experiments. 
All synthetic tensor experiments use dimensions
\((n_1,n_2,n_3,n_4)=(20,10,5,5)\).
The slice-dependent damping parameters in Subsubsections~\ref{ex513}--\ref{ex516}
are computed using \eqref{eq:lambda_scale_rule} with
\(\lambda_{\mathrm{floor}}=10^{-6}\), unless otherwise specified.

\begin{table}[ht]
\centering
\caption{Summary of synthetic tensor experiment settings}
\label{tab:experiment_summary}
\small
\renewcommand{\arraystretch}{1.25}
\begin{tabularx}{\textwidth}{c|Y|Y|Y|Y}
\hline
Experiment & Purpose & Noise setting & SD-TRK parameters & Iterations, trials \\
\hline
\ref{ex511}
&
Noiseless convergence of standard TRK
&
none
&
--
&
\(1000\); \(1\) trial
\\
\hline
\ref{ex512}
&
Semi-convergence of standard TRK
&
\(\delta_A=\delta_B\in\{0,10^{-4},10^{-2}\}\)
&
--
&
\(1000\); \(1\) trial per noise level
\\
\hline
\ref{ex513}
&
Well-conditioned comparison
&
\(\delta_A=10^{-3}\)\newline
\(\delta_B=0.5\)
&
\(\omega=0.9\)\newline
\(\lambda_{\mathrm{scale}}=5.0\)
&
\(2000\); \(20\) trials
\\
\hline
\ref{ex514}
&
Ill-conditioned comparison
&
\(\delta_A=0.01\)\newline
\(\delta_B=0.5\)
&
\(\omega=0.7\)\newline
\(\lambda_{\mathrm{scale}}=100.0\)
&
\(2000\); \(20\) trials
\\
\hline
\ref{ex515}
&
Paired known-target comparison
&
\(\delta_A=0.01\)\newline
\(\delta_B=0.5\)
&
\(\omega=0.8\)\newline
\(\lambda_{\mathrm{scale}}=50.0\)
&
\(2000\); \(10\) paired trials
\\
\hline
\ref{ex516}
&
Noise-scaling comparison
&
\(\delta_A=\delta_B=\delta\)\newline
\(\delta\in\{0,\allowbreak 10^{-3},\allowbreak {5\cdot10^{-3}},\allowbreak 10^{-2},\allowbreak {5\cdot10^{-2}}\}\)
&
\(\omega=0.9\)\newline
\(\lambda_{\mathrm{scale}}=5.0\)
&
\(1500\); \(10\) trials
\\
\hline
\end{tabularx}
\end{table}

\subsection{Synthetic tensor experiments}\label{sec:illustrative}

Throughout this subsection, we consider the synthetic tensor linear system \eqref{eq:t_linearsystem}.

\subsubsection{Standard TRK in the Noiseless Setting} \label{ex511}
We begin with a noiseless consistent tensor linear system to illustrate the behavior of standard TRK. 
The tensors $\mathcal{A}$ and $\mathcal{X}^\star$ were generated independently from the standard Gaussian distribution, and $\mathcal{B}=\mathcal{A} \mathcal{X}^\star$.
The initial iterate was set to the zero tensor, $\mathcal{X}^0=\mathbf{0}$, and the TRK iteration was run for $1000$ steps. 
The error was measured by the Frobenius norm $\|\mathcal{X}^t-\mathcal{X}^\star\|_F$. Figure~\ref{fig:illustrative_trk}(a) shows this error on a semilogarithmic scale. 
The approximately linear decay confirms the expected linear convergence of standard TRK in the noiseless consistent setting, in agreement with Theorem~\ref{thm:standard_convergence}.

\subsubsection{Standard TRK in the Doubly Noisy Setting} \label{ex512}
We next consider the doubly noisy setting.
The tensors \(\mathcal A\) and \(\mathcal B\) were perturbed according to
\[
\tilde{\mathcal A}=\mathcal A+\delta_A \mathcal G_A,
\qquad
\tilde{\mathcal B}=\mathcal B+\delta_B \mathcal G_B,
\]
where the entries of \(\mathcal G_A\) and \(\mathcal G_B\) were sampled
independently from the standard Gaussian distribution. Equivalently, in the
notation of \eqref{eq:doubly}, \(\mathcal E_A=\delta_A\mathcal G_A\) and
\(\mathcal E_B=\delta_B\mathcal G_B\).
We considered isotropic noise levels satisfying
\[
\delta_A=\delta_B\in\{0,10^{-4},10^{-2}\},
\]
where \(\delta_A=\delta_B=0\) corresponds to the noiseless reference setting.
The same noiseless tensors \(\mathcal A\) and \(\mathcal X^\star\) are used for all noise levels, and one run is performed per noise level.
We then applied standard TRK to \(\tilde{\mathcal A}\mathcal X=\tilde{\mathcal B}\), again starting from the zero tensor, and measured the error by \(\|\mathcal X^t- \mathcal{X}^\star\|_F\).
Figure~\ref{fig:illustrative_trk}(b) shows the corresponding error trajectories on a semilogarithmic scale.
While the noiseless case continues to decrease steadily, the noisy cases exhibit an initial decay followed by stabilization at nonzero error levels.
This semi-convergence behavior indicates that standard TRK is sensitive to doubly noisy perturbations and no longer converges to the exact solution in this setting.
The observed instability further motivates the introduction of stabilization mechanisms such as the proposed SD-TRK framework.

\begin{figure}[ht]
    \centering
    \begin{subfigure}{0.48\linewidth}
        \centering
        \includegraphics[width=\linewidth]{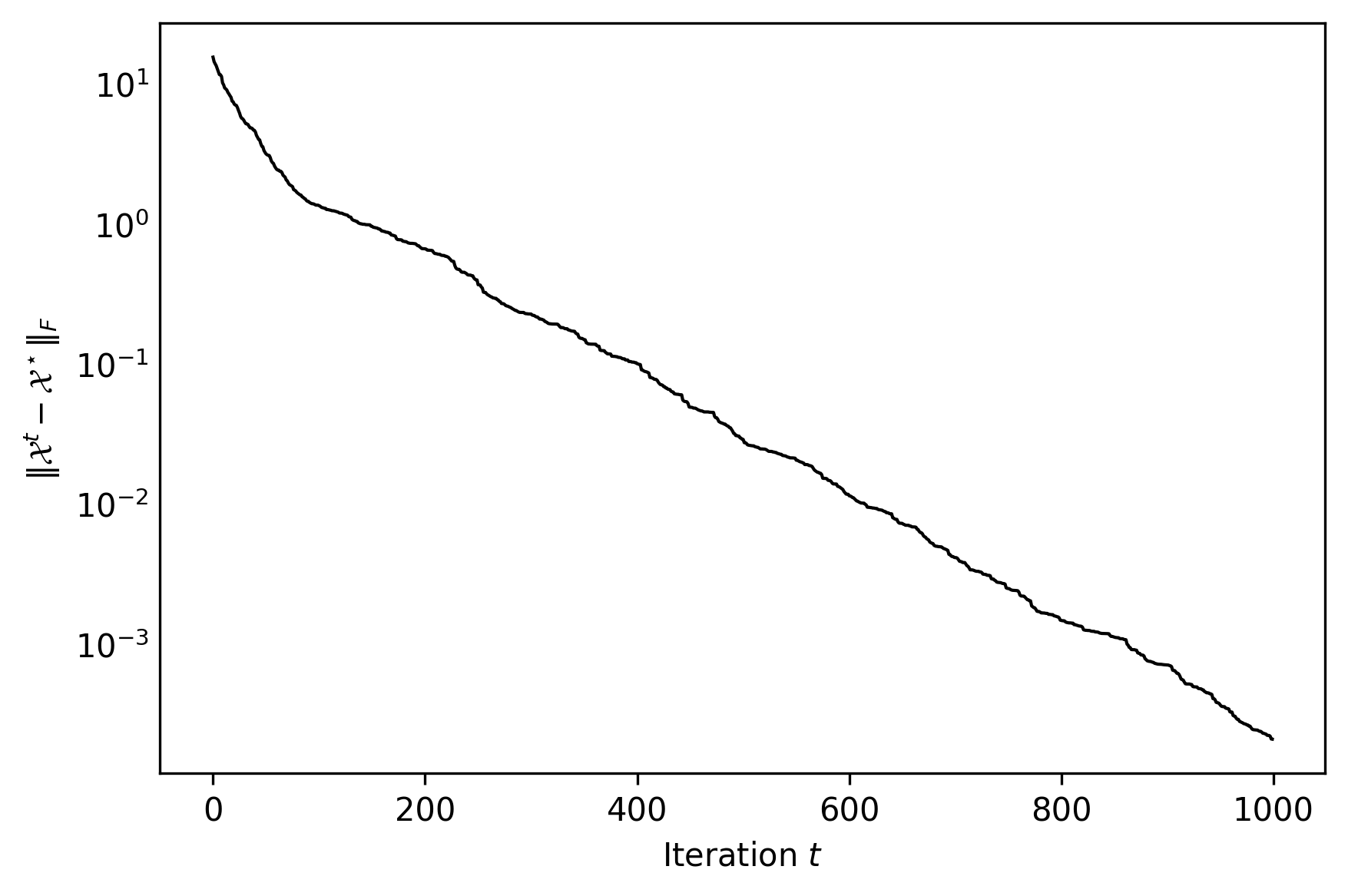}
        \caption*{(a) Noiseless setting}
    \end{subfigure}
    \hfill
    \begin{subfigure}{0.48\linewidth}
        \centering
        \includegraphics[width=\linewidth]{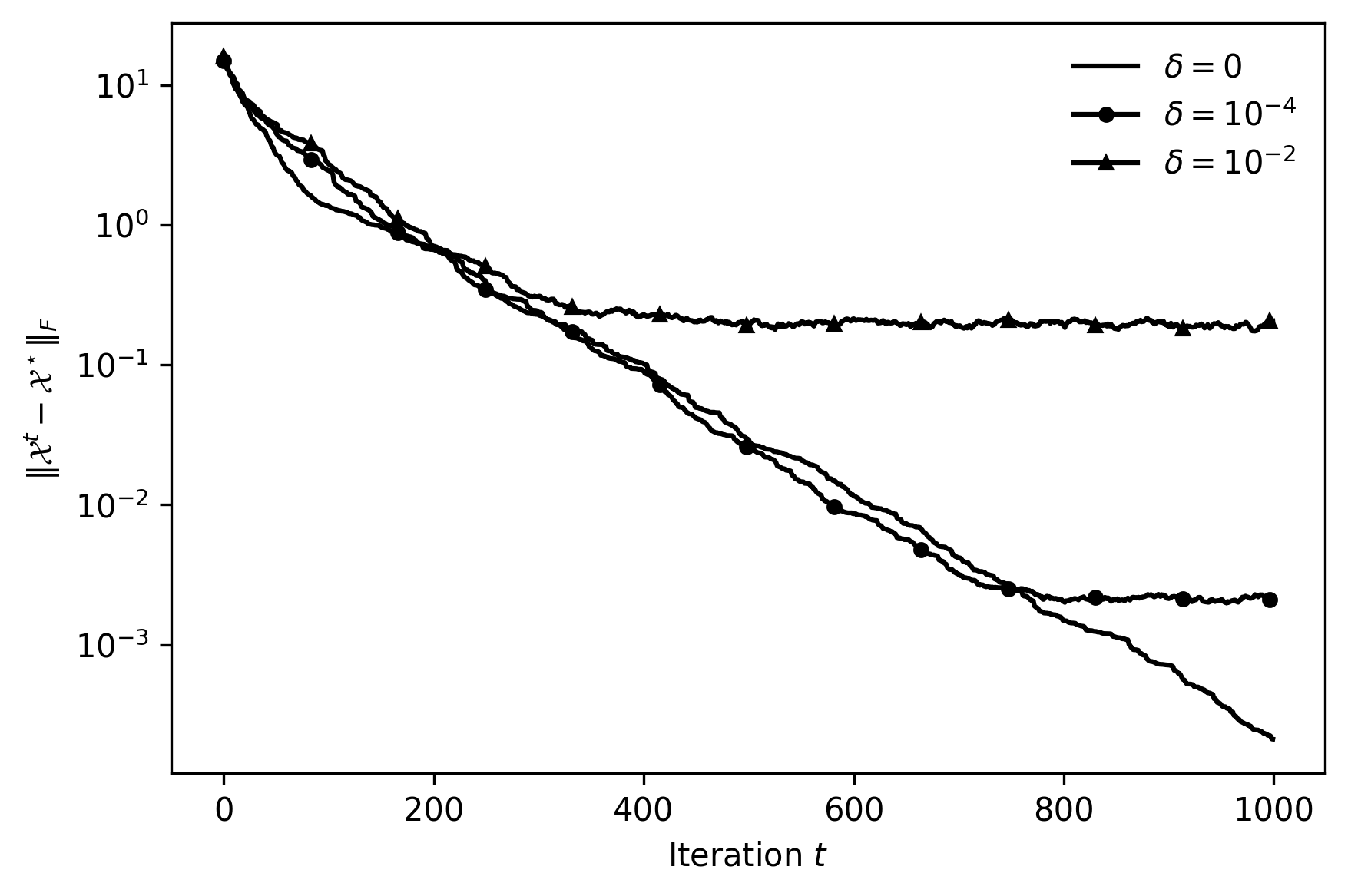}
        \caption*{(b) Doubly noisy setting}
    \end{subfigure}
    \caption{Convergence of standard TRK}
    \label{fig:illustrative_trk}
\end{figure}

\subsubsection{Well-Conditioned Doubly Noisy Comparison} \label{ex513}
We now consider a comparative experiment under a well-conditioned doubly noisy
tensor system. To isolate the influence of data-driven perturbations, we adopt an
asymmetric noise configuration with \(\delta_A = 10^{-3}\) and \(\delta_B = 0.5\).
This asymmetric choice is used to emphasize the effect of right-hand-side
perturbations while keeping structural distortion of the system tensor relatively
mild. In this way, the experiment focuses on the data-noise filtering behavior of
the proposed algorithm independently of severe operator perturbations.

The noiseless tensors \(\mathcal A\) and \(\mathcal X^\star\) are independently
generated from the standard Gaussian distribution, and the Gaussian noise tensors
\(\mathcal G_A\) and \(\mathcal G_B\) are generated independently. We then set
\[
\tilde{\mathcal A}
=
\mathcal A+\delta_A\mathcal G_A,
\qquad
\tilde{\mathcal B}
=
\mathcal A\mathcal X^\star+\delta_B\mathcal G_B.
\]
Equivalently, in the notation of \eqref{eq:doubly},
\(\mathcal E_A=\delta_A\mathcal G_A\) and
\(\mathcal E_B=\delta_B\mathcal G_B\).
Both algorithms are initialized from \(\mathcal X^0=\mathbf 0\) and executed for
\(T=2000\) iterations. The SD-TRK method employs \(\omega=0.9\) and uses
\eqref{eq:lambda_scale_rule} with \(\lambda_{\mathrm{scale}}=5.0\) and
\(\lambda_{\mathrm{floor}}=10^{-6}\).
All reported curves are averaged over \(20\) independent Monte Carlo trials.

Figure~\ref{fig:trk_multi}(a) and (b) present the averaged convergence trajectories of the standard TRK and SD-TRK methods, respectively.

\begin{figure}[ht]
    \centering
    \begin{subfigure}{0.48\linewidth}
        \centering
        \includegraphics[width=\linewidth]{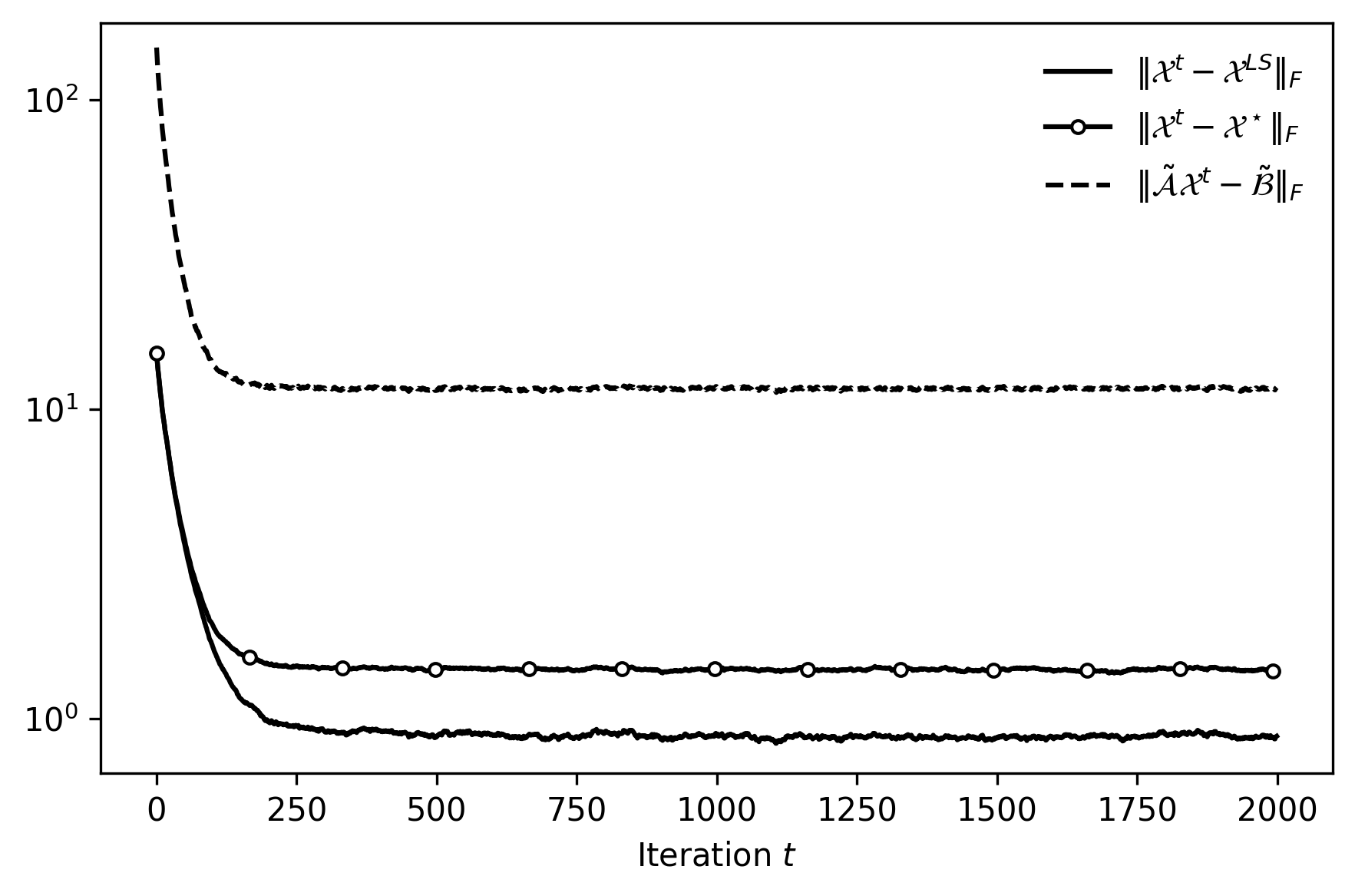}
        \caption*{(a) Well-Conditioned Standard TRK}
    \end{subfigure}
    \hfill
    \begin{subfigure}{0.48\linewidth}
        \centering
        \includegraphics[width=\linewidth]{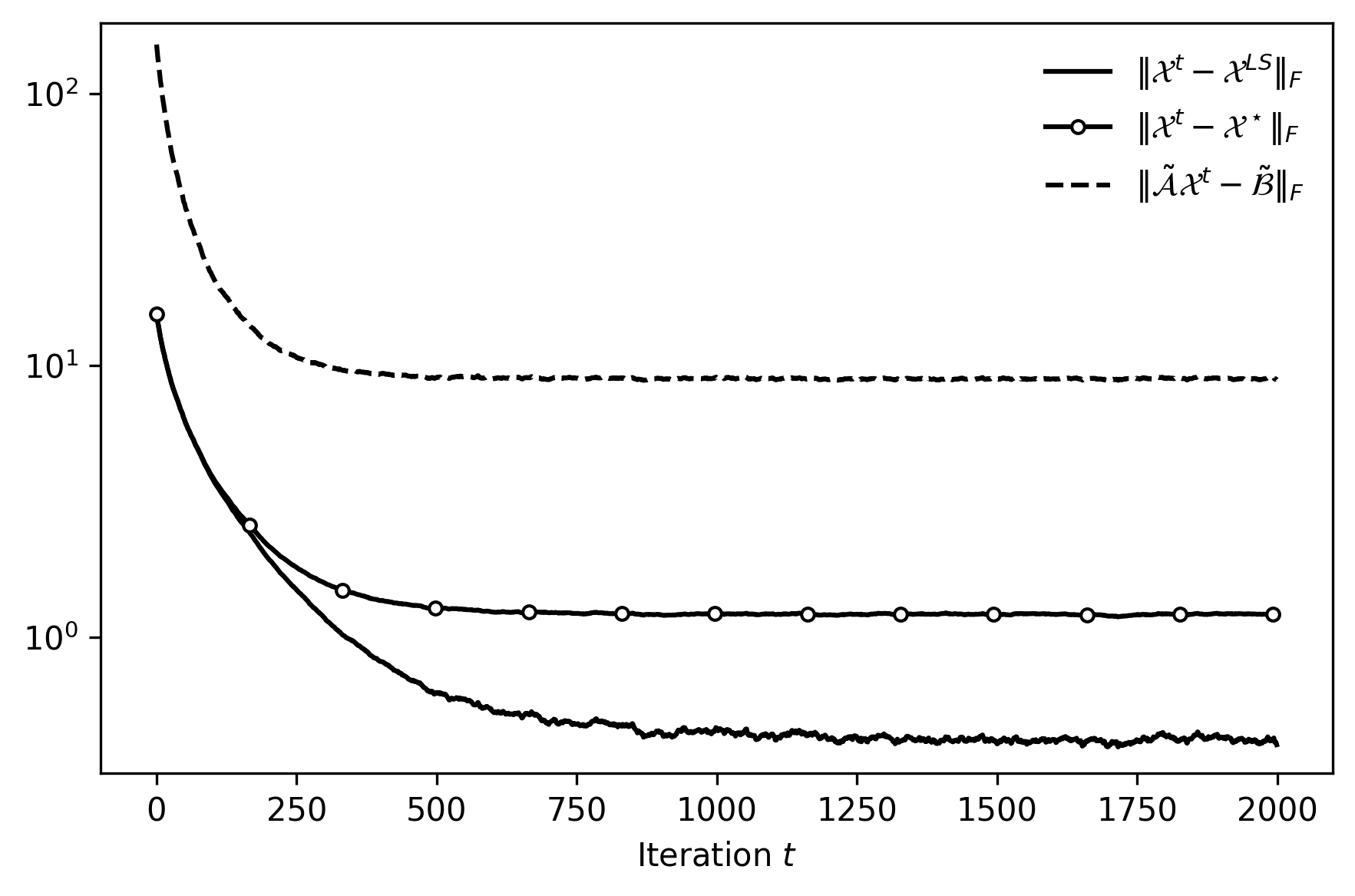}
        \caption*{(b) Well-Conditioned SD-TRK}
    \end{subfigure}
    \caption{Comparison of standard TRK and SD-TRK under a well-conditioned doubly noisy setting.}
    \label{fig:trk_multi}
\end{figure}

Under this well-conditioned setting, both methods exhibit stable iterative behavior. 
The standard TRK iteration attains a final least-squares error of \(0.8755\), a true reconstruction error \(\|\mathcal X^t- \mathcal{X}^\star\|_F\) of \(1.4214\), and a residual of \(11.7462\). In comparison, SD-TRK yields lower values for all three metrics, with a final least-squares error of \(0.4017\), a true reconstruction error of \(1.2020\), and a residual of \(8.8521\). Although the observation noise is substantial, the spectral slices of the system tensor remain reasonably well-conditioned, preventing strong amplification of the noise by ill-conditioned inverse steps.

Although the quantitative improvement is relatively modest in this well-conditioned regime, SD-TRK consistently achieves lower reconstruction error and residual values than standard TRK. Since the spectral slices do not exhibit severe ill-conditioning, the standard TRK method does not experience strong noise amplification and is therefore able to approximate the least-squares benchmark with reasonable accuracy.

A more significant distinction emerges in the relationship between the observable residual and the true reconstruction error. 
In practical applications, the ground-truth tensor \( \mathcal{X}^\star\) is unavailable, and the residual \(\|\tilde{\mathcal A}\mathcal X^t-\tilde{\mathcal B}\|_F\) becomes the primary quantity available for monitoring convergence.

For the standard TRK iteration shown in Figure~\ref{fig:trk_multi}(a), the observable residual rapidly decreases and reaches a nearly flat plateau after approximately \(150\)--\(200\) iterations. Since the residual is the only practically observable quantity in real-world applications, such behavior may incorrectly suggest that the reconstruction process has already converged. 
However, the true reconstruction error continues to decrease after the residual has nearly plateaued and remains noticeably separated from the residual trajectory.
As a result, residual-based stopping criteria may be difficult to calibrate reliably in this noisy setting.

In contrast, SD-TRK exhibits closer alignment between the residual trajectory and the stabilization of the true reconstruction error in Figure~\ref{fig:trk_multi}(b). Both quantities approach their plateau regions at approximately the same iteration stage. In this experiment, the observable residual is therefore a more informative diagnostic for SD-TRK than for standard TRK.

\subsubsection{Ill-Conditioned Doubly Noisy Comparison} \label{ex514}
We next examine the behavior of SD-TRK under a severely ill-conditioned doubly noisy tensor system. This experiment examines the stability of the reconstruction process when the underlying tensor operator contains highly ill-conditioned spectral components, a setting in which inverse instability and noise amplification become particularly pronounced.

To construct such a regime, we generate a Gaussian tensor and then multiply its \(j\)-th lateral slice by a scale factor \(c_j\), where the factors \(c_1,\ldots,c_{n_2}\) decay geometrically from \(1\) to \(2\times 10^{-2}\). 
This construction makes the smallest singular values of the Fourier-domain frontal slices \(\hat{\tilde A}^{(k)}\) small, thereby increasing the condition number of the tensor system. 
The ground-truth tensor \(\mathcal X^\star\) is initialized as the zero tensor. Its first four first-mode slices are filled with independent standard Gaussian entries scaled by \(0.75\), and all remaining first-mode slices are kept equal to zero.

To isolate the instability caused by the inversion of near-singular spectral components, we consider a minor numerical perturbation (\(\delta_A = 0.01\)) to the operator tensor, while a substantial observation noise (\(\delta_B = 0.5\)) is injected into the right-hand side tensor. All remaining dimensional settings and iteration counts are kept identical to those used in Subsubsection~\ref{ex513} in order to preserve consistency across the experiments. For SD-TRK, a stronger damping configuration is adopted with relaxation parameter \(\omega = 0.7\) together with an increased damping scale \(\lambda_{\mathrm{scale}} = 100.0\).
The corresponding slice-dependent damping parameters are computed from
\eqref{eq:lambda_scale_rule} with \(\lambda_{\mathrm{floor}}=10^{-6}\).

Figure~\ref{fig:ill_cond_trk} presents the empirical convergence trajectories of both methods, comparing the true reconstruction error \(\|\mathcal X^t- \mathcal{X}^\star\|_F\) together with the observable residual \(\|\tilde{\mathcal A}\mathcal X^t-\tilde{\mathcal B}\|_F\).

\begin{figure}[ht]
    \centering
    \begin{subfigure}{0.48\linewidth}
        \centering
        \includegraphics[width=\linewidth]{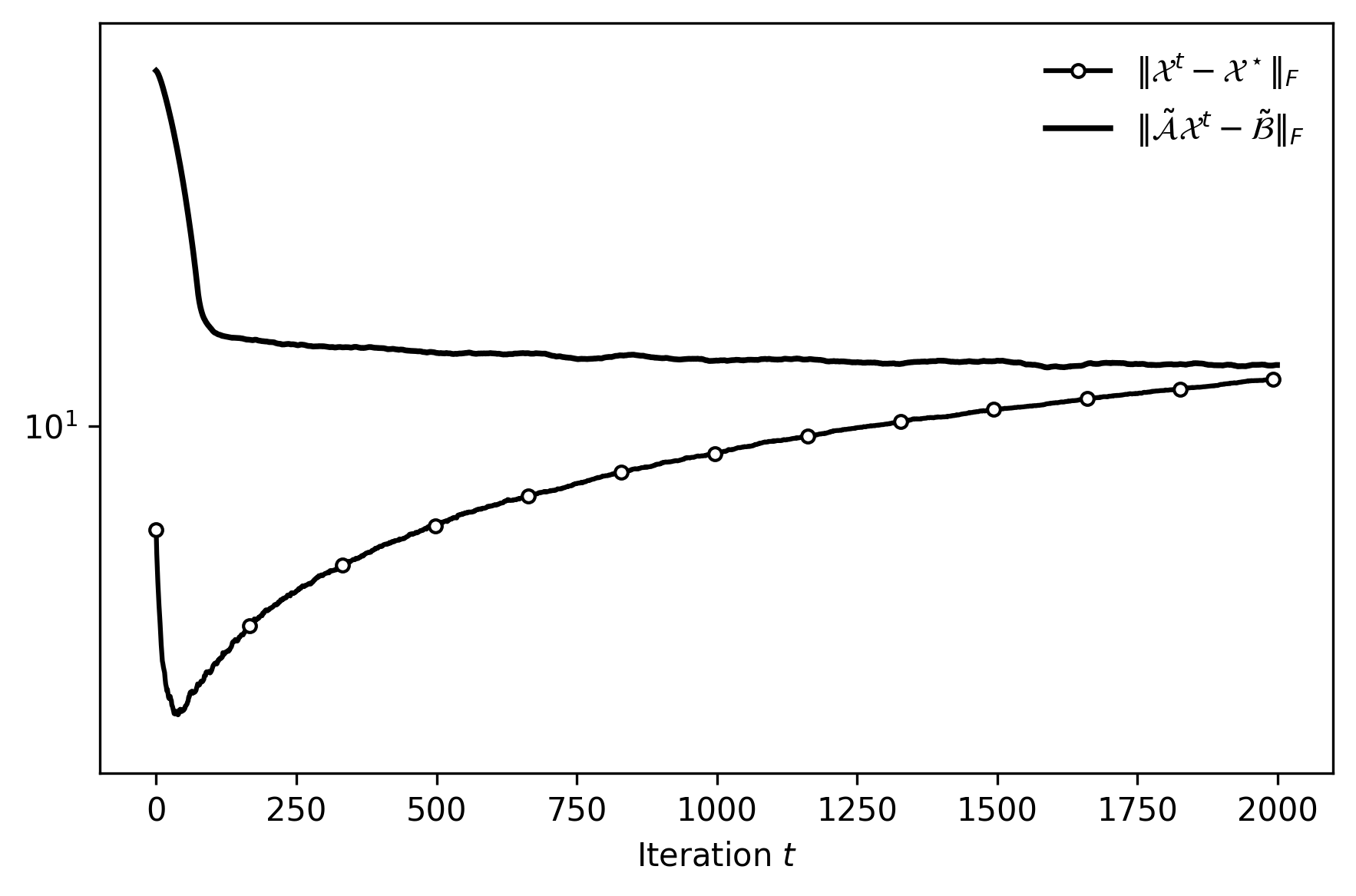}
        \caption*{(a) Standard TRK}
    \end{subfigure}
    \hfill
    \begin{subfigure}{0.48\linewidth}
        \centering
        \includegraphics[width=\linewidth]{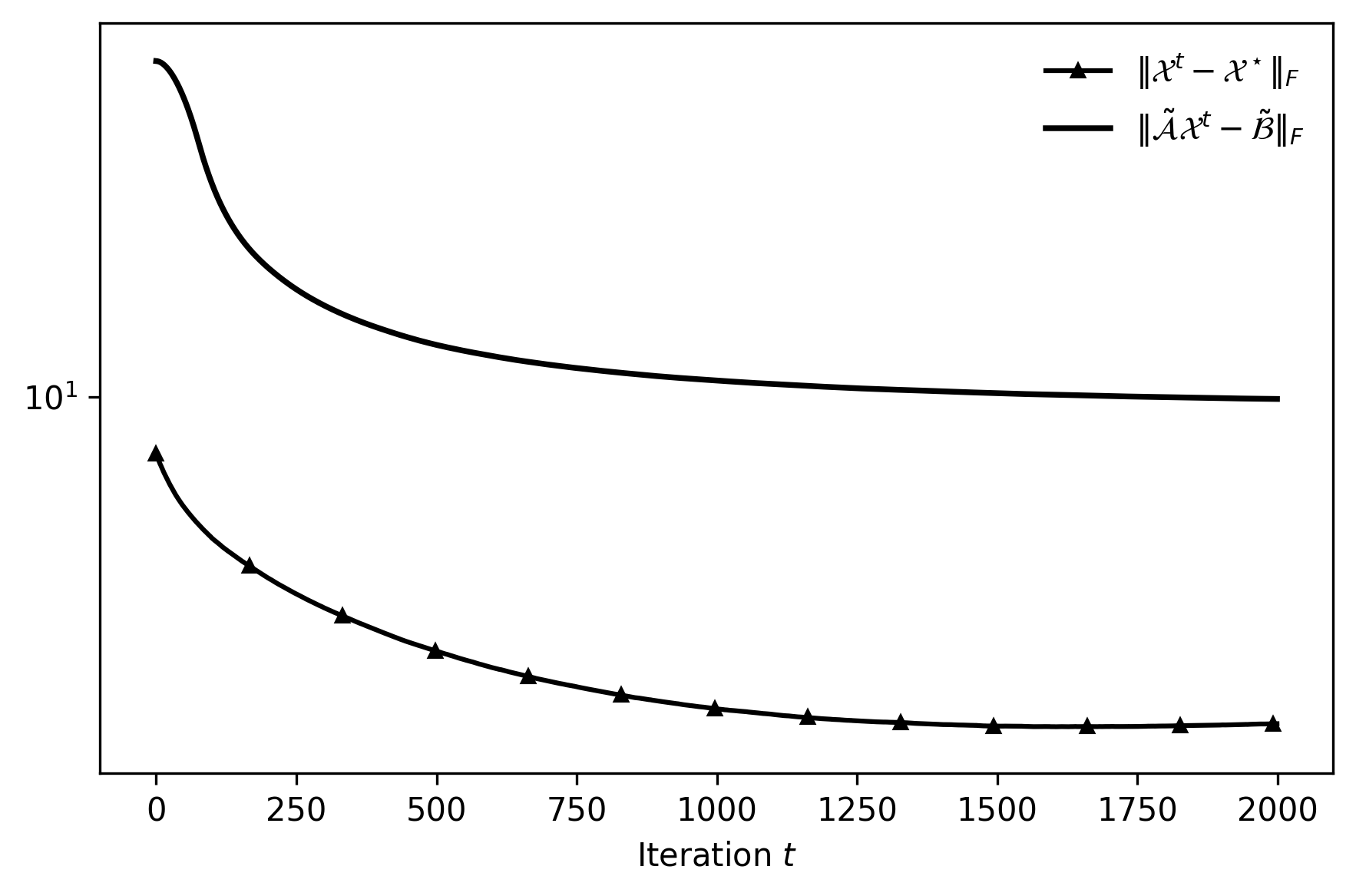}
        \caption*{(b) SD-TRK}
    \end{subfigure}
    \caption{Comparison of standard TRK and SD-TRK under an ill-conditioned doubly noisy setting.}
    \label{fig:ill_cond_trk}
\end{figure}

The empirical results reveal severe instability in the standard TRK framework under this ill-conditioned regime. As shown in Figure~\ref{fig:ill_cond_trk}(a), the true reconstruction error of standard TRK initially decreases during the early iterations, but subsequently reverses direction and eventually increases toward a large asymptotic error level of approximately \(12.1125\). This characteristic V-shaped trajectory illustrates the classical phenomenon of semi-convergence. Because the standard TRK iteration does not incorporate any regularization mechanism, it attempts to satisfy the noisy observation tensor \(\tilde{\mathcal B}\) through repeated inversion of severely ill-conditioned spectral components. In the presence of near-zero singular values, this process leads to substantial amplification of observation noise and significantly degrades the reconstruction accuracy.

In contrast, SD-TRK is empirically more stable under the same ill-conditioned
setting. The frequency-dependent damping parameters \(\{\lambda^{(k)}\}_{k=1}^{n_3}\) act as a spectral regularization mechanism, damping the amplification associated with near-zero singular values across the Fourier-domain frontal slices. Consequently, as illustrated in Figure~\ref{fig:ill_cond_trk}(b), the true reconstruction error of SD-TRK decreases nearly monotonically and eventually stabilizes near an error floor of approximately \(2.1013\). Thus, the final reconstruction error is approximately \(5.76\) times smaller than that of standard TRK.

The residual behavior also favors SD-TRK in this experiment. Even under severe ill-conditioning, the residual trajectory remains more closely aligned with the stabilization phase of the true reconstruction error and gradually approaches a stable plateau after approximately \(500\) iterations. In this experiment, the observable residual is therefore more informative for SD-TRK than for standard TRK. These results are consistent with the stabilizing effect of SD-TRK in this ill-conditioned doubly noisy setting.

\subsubsection{Controlled Known-Target Comparison} \label{ex515}
A controlled known-target experiment is conducted to control for stochastic
sampling variability and provide a direct point-to-point comparison between the
standard TRK and SD-TRK frameworks. In each Monte Carlo realization, the two
methods are run on the same ground-truth tensor \(\mathcal X^\star\), the same
observed operator \(\tilde{\mathcal A}\), the same observed right-hand side
\(\tilde{\mathcal B}\), and the same randomized row-selection sequence
\(\mathcal I_T=\{i_1,i_2,\dots,i_T\}\). Across different realizations, the system
and row-selection sequence are regenerated independently. Consequently, the
comparison is paired within each realization, so discrepancies between the two
trajectories can be attributed to the intrinsic update mechanisms rather than to
different sampled systems or row sequences.

To construct a challenging recovery scenario, the tensor system is generated by the same lateral-slice scaling construction as in Subsubsection~\ref{ex514}, with scale factors decaying geometrically from \(1\) to \(3\times10^{-2}\). As in Subsubsection~\ref{ex514}, we use a sparse structural configuration for the ground-truth tensor \( \mathcal{X}^\star\).
Specifically, the dominant signal energy is concentrated entirely within the leading
\(5\) active rows, while the remaining rows are set identically to zero. This sparse
construction is motivated by practical tensor applications in which only a limited
number of dominant structural components contain meaningful information, whereas
the remaining components primarily consist of weak or noise-dominated directions.

The observation tensor is contaminated with substantial perturbation noise
\((\delta_B=0.5)\), while the operator tensor is subjected to a relatively minor perturbation
\((\delta_A=0.01)\). For the proposed SD-TRK framework, the relaxation parameter is
set to \(\omega=0.8\) together with a damping scale
\(\lambda_{\mathrm{scale}}=50.0\). The corresponding slice-dependent damping
parameters are again computed from \eqref{eq:lambda_scale_rule} with
\(\lambda_{\mathrm{floor}}=10^{-6}\). Both algorithms are run for
\(T=2000\) iterations, and all convergence trajectories are reported as averages
over \(10\) paired Monte Carlo realizations.

Figure~\ref{fig:known_target_comparison} presents the resulting convergence trajectories on a logarithmic scale, directly comparing the true reconstruction errors of standard TRK and SD-TRK together with the observable SD-TRK residual. The standard TRK residual is omitted from the figure for readability.

\begin{figure}[ht]
    \centering
    \includegraphics[width=0.75\linewidth]{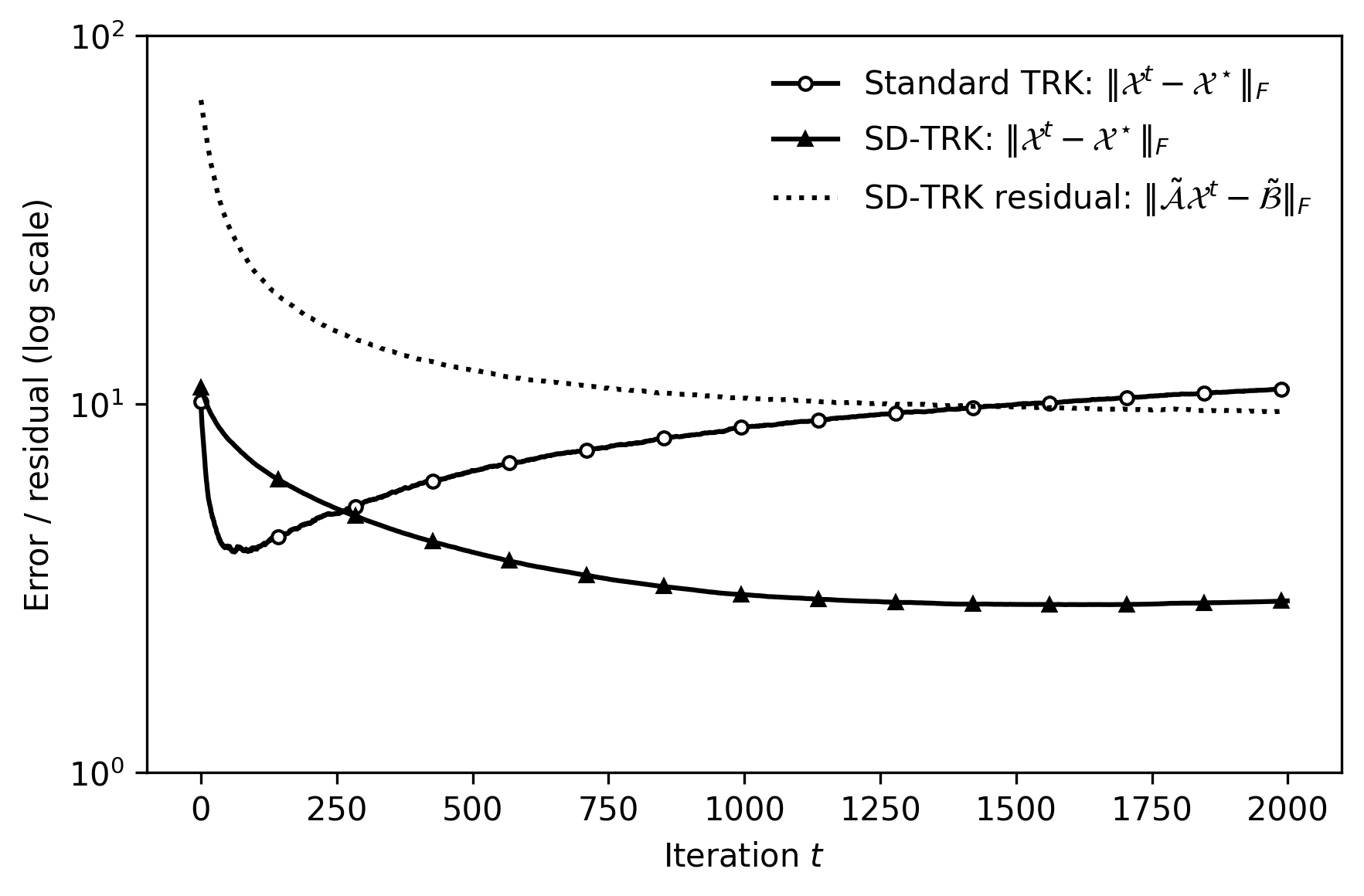}
    \caption{Controlled known-target comparison between standard TRK and SD-TRK under a sparse ill-conditioned doubly noisy setting.}
    \label{fig:known_target_comparison}
\end{figure}

The resulting trajectories show a clear difference between the two methods in a
sparse-target, ill-conditioned doubly noisy setting. As shown in
Figure~\ref{fig:known_target_comparison}, the standard TRK error initially
decreases during the early iterations. It then increases and exhibits pronounced
semi-convergence. Since the target tensor contains large zero-valued regions, the
standard TRK iteration attempts to fit noise-contaminated components in directions
where no true signal is present. Near-zero singular values further amplify the
observational perturbation, causing the true reconstruction error
\(\|\mathcal X^t-\mathcal X^\star\|_F\) to increase toward an asymptotic level of
approximately \(10.9761\).

In contrast, SD-TRK remains stable under the same row-sampling sequence. The frequency-dependent damping mechanism suppresses weak spectral directions and limits the amplification of noise-dominated components. Consequently, the true reconstruction error decreases nearly monotonically and stabilizes near an error floor of approximately \(2.9251\). Thus, the final reconstruction error is approximately \(3.75\) times smaller than that of standard TRK.

Moreover, the observable SD-TRK residual
\(\|\tilde{\mathcal A}\mathcal X^t-\tilde{\mathcal B}\|_F\) remains closely aligned with the stabilization phase of the true reconstruction error. 
This comparison is meaningful because the two methods use the same row-sampling sequence within each paired realization.
The experiment therefore suggests that the spectral damping mechanism suppresses noise amplification in weak spectral components while preserving the dominant sparse structure of the target tensor.

\subsubsection{Reference Horizon vs. Empirical Stabilization} \label{ex516}
We finally investigate the relationship between a damped reference horizon and the empirical stabilization behavior of SD-TRK under varying noise levels. We adopt the general doubly noisy mathematical framework introduced in Subsubsection~\ref{ex512}, setting symmetric noise scales such that $\delta_A = \delta_B = \delta$, with
\[
\delta \in \{0,10^{-3},5\cdot 10^{-3},10^{-2},5\cdot 10^{-2}\}.
\]
For each value of \(\delta\), we run \(10\) independent trials, each with \(1500\) iterations. The SD-TRK parameters are fixed at
\(\omega=0.9\) and \(\lambda_{\mathrm{scale}}=5.0\), with
\(\lambda_{\mathrm{floor}}=10^{-6}\) in \eqref{eq:lambda_scale_rule}. 
For each
trial, we compute the average of \(\|\mathcal X^t-\mathcal X^{LS}\|_F\) over the last \(100\) iterations,
and then average these values over the \(10\) trials. We refer to this quantity as
the last-100 least-squares error. For comparison, we also plot the damped reference horizon

\[
H_{\mathrm{damp}}
:=
\max_{1\le k\le n_3}
\frac{
\|\hat E_A^{(k)}\hat X^{LS,(k)}-\hat E_B^{(k)}\|_F
}{\sqrt{
\sigma_{\min}^2(\hat{\tilde A}^{(k)})+\lambda^{(k)}}
}.
\]

This quantity is not a stopping threshold derived from the theorem. It is a conservative
slice-wise proxy for the size of the perturbation term after damped spectral
normalization. It can be viewed as a damped and slice-wise analogue of the perturbation term in the reference scale \eqref{eq:lsbound}, stated on the error-norm scale rather than the squared-error scale.
We compare it with the last-100 least-squares error because both
quantities are measured in the same error-norm scale and both reflect the
noise-dependent stabilization level of the observed-system iterates.

\begin{figure}[ht]
    \centering
    \includegraphics[width=0.75\linewidth]{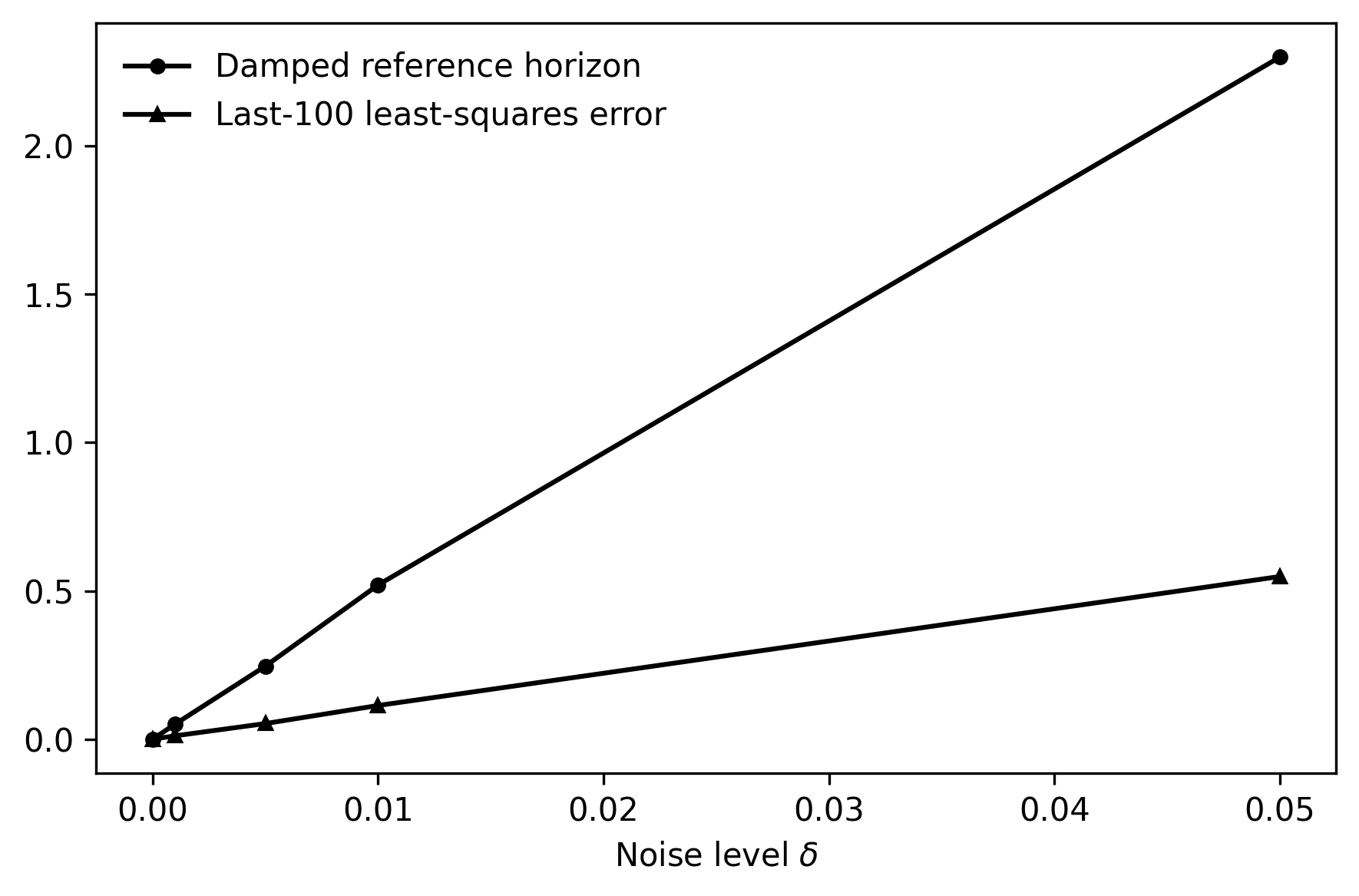}
    \caption{Damped reference horizon and last-100 least-squares error under different noise levels \(\delta\).}
\label{fig:noise_scaling}
\end{figure}

Figure~\ref{fig:noise_scaling} shows that both the damped reference horizon and the last-100 least-squares error increase as the positive noise level \(\delta\) increases.
For the positive noise levels tested, the damped reference horizon remains above the last-100 least-squares error, which is consistent with its role as a conservative reference horizon.
At \(\delta=0\), the reference horizon is zero by construction and serves as the noiseless reference case.
This gap is expected because the damped reference horizon is computed using
slice-wise spectral information, while the observed last-100 least-squares error reflects the actual behavior of SD-TRK iterates.

\subsection{Image Reconstruction Comparison}
\label{sec:image_application}

We next consider a two-pass image reconstruction problem to compare standard TRK
and SD-TRK. 
The goal is not to
claim state-of-the-art image restoration performance, but to examine the behavior
of the two Kaczmarz updates under the same reconstruction pipeline.

Unlike the synthetic experiments in Subsubsections~\ref{ex513}--\ref{ex516}, which
use slice-dependent damping parameters \(\lambda^{(k)}\), this image experiment
uses a fixed scalar damping parameter \(\lambda=10^{-2}\). It should therefore be
interpreted as a comparison of damped and undamped Kaczmarz updates in a two-pass
reconstruction setting, not as a direct validation of the slice-dependent damping
rule in \eqref{eq:lambda_scale_rule}. In fact, since the blur operators below have nonzero entries only in the first frontal slice, all Fourier frontal slices of each operator coincide. Slice-dependent damping therefore offers no additional flexibility in this setting, and a scalar damping parameter is the natural choice.

We model the degradation using a separable Gaussian blur. Let
\(A_{\mathrm{mat}}^r\) and \(A_{\mathrm{mat}}^c\) denote the two-dimensional
noise-free blur matrices acting along the row and column directions, respectively.
These matrices are embedded into t-product tensors by placing them in the first
frontal slice and setting the remaining frontal slices to zero:
\[
\mathcal A^r_{::1}=A_{\mathrm{mat}}^r,
\qquad
\mathcal A^c_{::1}=A_{\mathrm{mat}}^c,
\qquad
\mathcal A^r_{::k}=\mathcal A^c_{::k}=0
\quad \text{for } k=2,3.
\]
The observed blur operators are formed by perturbing only these underlying
two-dimensional blur matrices before embedding them as tensors. Specifically,
\[
\tilde{\mathcal A}^r_{::1}
=
A_{\mathrm{mat}}^r+\delta_A G^r_{\mathrm{mat}},
\qquad
\tilde{\mathcal A}^c_{::1}
=
A_{\mathrm{mat}}^c+\delta_A G^c_{\mathrm{mat}},
\]
and
\[
\tilde{\mathcal A}^r_{::k}
=
\tilde{\mathcal A}^c_{::k}
=
0
\quad \text{for } k=2,3.
\]
Here, the entries of \(G^r_{\mathrm{mat}}\) and \(G^c_{\mathrm{mat}}\) are independently
sampled from the standard Gaussian distribution, and \(\delta_A\) controls the
operator perturbation level.

For the image experiment only, we use \(\mathcal Z^{\top_{12}}\) to denote
the tensor obtained by transposing each frontal slice of \(\mathcal Z\), namely
\[
(\mathcal Z^{\top_{12}})_{ijk}=\mathcal Z_{jik}.
\]
This operation swaps only the row and column image directions and does not reverse
the third dimension. It should not be confused with the t-product conjugate
transpose \(\mathcal Z^*\).

Using the noise-free blur operators, the blurred image is generated as
\[
\mathcal{B}^{\mathrm{blur}}
=
\left(
\mathcal{A}^{c}
(\mathcal{A}^{r}\mathcal{X}^{\textrm{true}})^{\top_{12}}
\right)^{\top_{12}}.
\]
The observed image is then obtained by adding Gaussian noise to this blurred image:
\begin{equation}\label{eq:doublyimage}
\tilde{\mathcal B}
=
\mathcal B^{\mathrm{blur}}
+
\delta_B\mathcal G^{B}
=
\left(
{\mathcal A}^{c}         
({\mathcal A}^{r}\mathcal X^{\textrm{true}})^{\top_{12}}
\right)^{\top_{12}}
+
\delta_B\mathcal G^{B}.
\end{equation}
Here, the entries of \(\mathcal G^{B}\) are independently sampled from the
standard Gaussian distribution, and \(\delta_B\) controls the image noise level.
In this image pipeline, the degraded data are generated with the noise-free blur
operators \(\mathcal A^r\) and \(\mathcal A^c\), while the algorithms are supplied
with the perturbed observed operators \(\tilde{\mathcal A}^r\) and
\(\tilde{\mathcal A}^c\). Thus, \(\delta_A\) creates a mismatch between the
data-generating operators and the operators available to the algorithms, in direct
analogy with the doubly noisy model of Section~\ref{sec:error}, in which the data
are generated by the noiseless tensor \(\mathcal A\) but only the observed tensor
\(\tilde{\mathcal A}\) is available.

The model \eqref{eq:doublyimage} gives the blurred and noisy image available to the algorithm. 
Given $\tilde{\mathcal{A}}^{r}$, $\tilde{\mathcal{A}}^{c}$, and $\tilde{\mathcal{B}}$, our goal is to recover an approximation of the clean image tensor $\mathcal{X}^{\textrm{true}}$. 
Since the blur in \eqref{eq:doublyimage} is separable, we apply SD-TRK in two directional passes. 
First, we apply SD-TRK to the row-direction system
\begin{equation}
\label{eq:row_image_system}
\tilde{\mathcal{A}}^{r}\mathcal{X}
=
\tilde{\mathcal{B}},
\end{equation}
and denote the resulting intermediate reconstruction by $\mathcal{X}^{\mathrm{row}}$. 
Then, after transposing $\mathcal{X}^{\mathrm{row}}$, we apply SD-TRK to the column-direction system
\begin{equation}
\label{eq:col_image_system}
\tilde{\mathcal{A}}^{c}\mathcal{Y}
=
(\mathcal{X}^{\mathrm{row}})^{\top_{12}}.
\end{equation}
The final reconstruction is defined as
\[
\mathcal{X}^{\mathrm{rec}}=\mathcal{Y}^{\top_{12}}.
\]

\subsubsection{Experimental setting}

In the experiment, $\mathcal{X}^{\textrm{true}}$ is an RGB image of the Geisel Library at UC San Diego, resized and normalized so that all pixel values lie in $[0,1]$. 
The processed image has size  $256\times170\times3$, where the third dimension corresponds to the RGB color channels. 
The image degradation follows a standard deblurring framework \cite{Hansen2006}. 
The underlying row and column blur matrices \(A_{\mathrm{mat}}^r\) and
\(A_{\mathrm{mat}}^c\) are generated using Gaussian smoothing
\cite{GonzalezWoods2018}, with standard deviation \(2.0\) and truncation radius \(8\),
and are then embedded as t-product operators as described above.
The perturbation levels are set to $\delta_A=10^{-4}$ for the blur operators and $\delta_B=10^{-2}$ for the observed image. 
In the implementation, pixel values are clipped to the range $[0,1]$ after adding image noise.

We then implement both standard TRK and SD-TRK for the two directional
systems
\eqref{eq:row_image_system} and \eqref{eq:col_image_system}. The two
methods use the same two-pass reconstruction procedure, the same sampling rule, and the same number of iterations in each pass. For SD-TRK, both passes use relaxation parameter \(\omega=0.30\), damping parameter \(\lambda=10^{-2}\), and \(1200\) iterations.
The standard TRK baseline is obtained by using the corresponding
undamped update,
that is, \(\omega=1\) and \(\lambda=0\), with the same iteration count. Since the blurred observation already provides a natural approximation of the clean image, both directional passes are initialized from their degraded right-hand sides, that is, $\mathcal{X}^0=\tilde{\mathcal{B}}$ for the row pass and $\mathcal{X}^0=(\mathcal{X}^{\mathrm{row}})^{\top_{12}}$ for the column pass.

To measure how close an image is to the clean image $\mathcal{X}^{\textrm{true}}$, we use two metrics. 
For an image tensor $\mathcal{Z}$, the first metric is the relative error
\begin{equation}
\label{eq:relerror}
\frac{\|\mathcal{Z}-\mathcal{X}^{\textrm{true}}\|_F}
{\|\mathcal{X}^{\textrm{true}}\|_F}.
\end{equation}
The second metric is the peak signal-to-noise ratio (PSNR), a standard image quality metric in image processing \cite{GonzalezWoods2018,imagequality2004}.

\subsubsection{Numerical results}

We now compare the performance of the standard TRK and the proposed SD-TRK using quantitative reconstruction metrics and residual evolution. 
For SD-TRK, we use relaxation parameter \(\omega=0.30\) and a fixed scalar damping
parameter \(\lambda = 10^{-2}\) throughout the two-pass reconstruction pipeline.

While standard TRK may be sensitive to noise amplification in noisy and inconsistent settings, SD-TRK incorporates a damped normalization term that regularizes the inverse step and improves stability across the two directional passes. To quantitatively assess the reconstruction quality, we compute two standard image fidelity metrics: the relative reconstruction error and the Peak Signal-to-Noise Ratio (PSNR). Table~\ref{tab:image_metrics} summarizes the final quantitative image results recorded at the end of \(1200\) iterations per directional pass for the degraded observation and the two reconstructed outputs.

\begin{table}[ht]
\centering
\caption{Quantitative image results for the degraded observation and reconstructed outputs}
\label{tab:image_metrics}
\renewcommand{\arraystretch}{1.25}
\begin{tabular}{c|c|c}
\hline
Image tensor or method & Relative error \eqref{eq:relerror} & PSNR (dB) \\
\hline
Degraded observation \(\tilde{\mathcal B}\) & 0.1289 & 22.5856 \\
Standard TRK reconstruction & 0.1246 & 22.8824 \\
SD-TRK reconstruction \(\mathcal X^{\mathrm{rec}}\) & 0.1025 & 24.5723 \\
\hline
\end{tabular}
\end{table}

The quantitative metrics indicate that, under the same two-pass reconstruction procedure, SD-TRK achieves better reconstruction quality than the standard TRK baseline in this experiment. Specifically, SD-TRK achieves a lower relative error of \(0.1025\) and a higher PSNR of \(24.5723\)~dB, showing a quantitative improvement over both the standard TRK and the initial degraded observation \(\tilde{\mathcal B}\). 

\begin{figure}[ht]
    \centering
    \includegraphics[width=\linewidth]{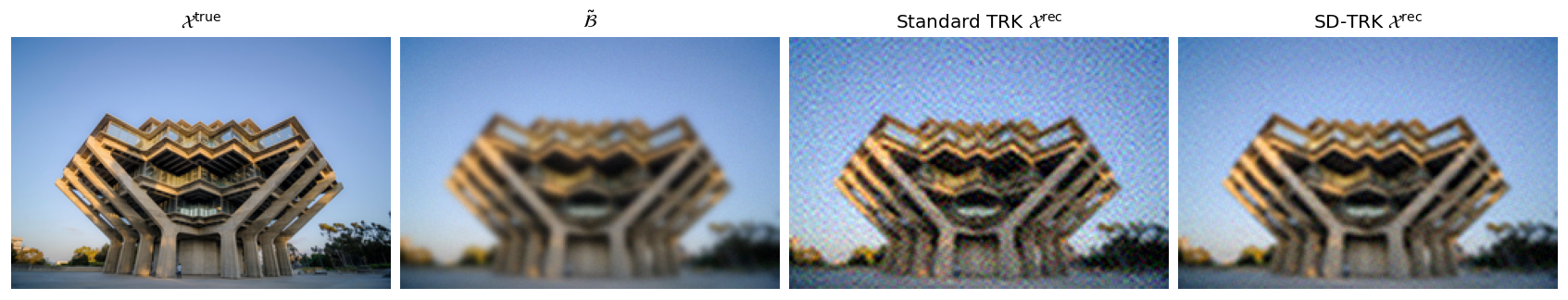}
    \caption{Image restoration results}
    \label{fig:image_reconstruction}
\end{figure}

Figure~\ref{fig:image_reconstruction} displays the clean image, the degraded observation, the standard TRK, and the SD-TRK reconstruction. Although the visual differences between the standard and damped updates appear modest due to the well-conditioned nature of the underlying blur operators, the proposed damping mechanism reduces noise propagation across successive directional passes. 

To examine the iterative behavior of the two methods, we track the residual norms for the row-directional and column-directional systems. For visualization, the Frobenius norm of each residual is recorded at regular intervals during the iteration process.

\begin{figure}[ht]
    \centering
    \includegraphics[width=\linewidth]{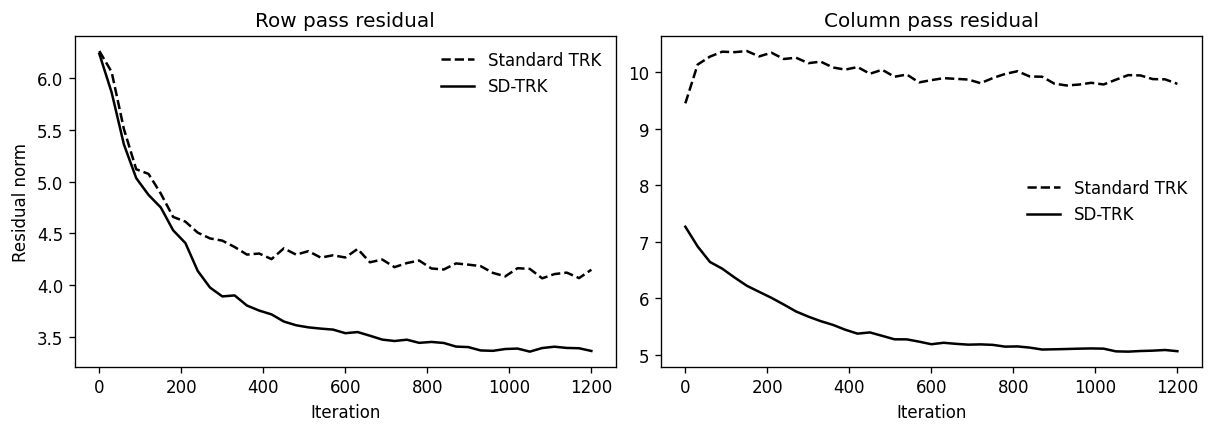}
    \caption{Residual decay for the two directional systems}
    \label{fig:residual_decay}
\end{figure}

Figure~\ref{fig:residual_decay} shows the residual evolution for the row-directional and column-directional systems under both standard TRK and SD-TRK. 
The residual trajectories stabilize near nonzero plateaus. In the row pass, both methods decrease during the early iterations, whereas in the column pass the standard TRK residual initially increases slightly before stabilizing.
Since the observed right-hand sides contain image noise and pixel values are clipped after noise is added, driving the residuals to zero is not necessarily aligned with recovering the clean image. 
In addition, the fixed finite iteration budget and, for SD-TRK, the damped updates lead to nonzero terminal residuals in the reported runs.

In this experiment, SD-TRK attains lower final residuals than the standard TRK baseline in both directional passes. This behavior is consistent with the quantitative image metrics in Table~\ref{tab:image_metrics}, where SD-TRK also achieves the smallest relative reconstruction error and the largest PSNR. These results indicate that the damped update improves the stability of the two-pass reconstruction process under this noisy image reconstruction setting.

\section{Conclusion}\label{sec:con}

In this paper, we studied tensor randomized Kaczmarz methods for tensor linear
systems under a doubly noisy perturbation model. We first analyzed standard TRK
when both the system tensor and the right-hand side tensor are perturbed. The
resulting recursion contains a contraction term and a persistent perturbation term.
This explains why standard TRK may exhibit semi-convergent or noise-limited
behavior in doubly noisy tensor systems.

We then introduced the spectrally damped tensor randomized Kaczmarz method. The
SD-TRK update combines a relaxation parameter \(\omega\) with a Tikhonov-type
damping parameter \(\lambda\). It reduces to standard TRK when \(\omega=1\) and
\(\lambda=0\). We derived an expected error recursion for SD-TRK that separates
the propagation of the previous error from the perturbation injected by the noisy
model. The bound shows a speed--robustness trade-off. Stronger damping and
underrelaxation can reduce noise amplification. They may also weaken the guaranteed
contraction. We also presented an FFT-based implementation that applies the damped
update slice-wise in the Fourier domain and allows frequency-dependent damping
parameters \(\lambda^{(k)}\).

The numerical experiments are consistent with this interpretation. In noiseless settings,
standard TRK exhibits the expected convergent behavior. In doubly noisy settings,
its error trajectories are affected by a persistent noise-dependent component. In
the well-conditioned, ill-conditioned, and controlled known-target comparisons,
SD-TRK is more stable than standard TRK in the tested settings. 
The noise-scaling experiment shows that, for the positive noise levels tested, the empirical last-100 least-squares error increases with the noise level and remains below the damped reference horizon, while the noiseless case serves as a zero-horizon reference case.
The image experiment also suggests that SD-TRK is more stable than standard TRK under the same two-pass reconstruction procedure. In the reported setting, SD-TRK improves both the relative error and PSNR compared with the degraded observation and the standard TRK reconstruction. Thus, the image experiment should be interpreted as a controlled method comparison rather than as a
state-of-the-art image restoration benchmark.

Several limitations remain. The main theoretical recursion is derived for a scalar
damping parameter \(\lambda\), while the implementation allows slice-dependent
parameters \(\lambda^{(k)}\). The experiments with slice-dependent damping should
therefore be viewed as empirical validation of the Fourier-domain implementation,
not as direct numerical instances of the scalar-parameter theorem. The present work
also uses fixed user-chosen relaxation and damping parameters. A systematic
ablation of relaxation and damping, together with adaptive parameter selection,
remains an important direction for future work.

A further direction is to move beyond damped least-squares-type stabilization and
develop robust tensor Kaczmarz methods for more structured corruption models.
Tensor total least-squares formulations are natural when perturbations in the
system tensor and the right-hand side tensor should be treated jointly. Combining
spectral damping with robust screening rules may lead to tensor Kaczmarz methods
that are stable under dense noise and robust to sparse outliers. Interquartile-range
screening of corrupted row slices or frequency components is one possible approach.
Such extensions would also clarify the relationship between spectral regularization,
semi-convergence, and robust row-action methods for doubly noisy tensor inverse
problems.

\bibliographystyle{siam}
\bibliography{references} 

\appendix

\section{Efficient t-product Computation via FFT}
\label{sec:appendix_fft}

In this appendix, we provide the mathematical background and algorithmic details for the efficient computation of the t-product. The t-product operation is fundamentally based on the convolution theorem, which allows us to transform computationally expensive convolution operations in the spatial domain into efficient element-wise multiplications in the frequency domain.

\subsection{Mathematical Foundations}
Let $\mathcal{A} \in \mathbb{C}^{n_1 \times n_2 \times n_3}$ and $\mathcal{B} \in \mathbb{C}^{n_2 \times n_4 \times n_3}$ be two tensors. 
The t-product $\mathcal{C} = \mathcal{A} \mathcal{B}$ can be interpreted as a circular convolution along the third dimension (tube fibers). 
Let $\mathbf{a}_{ij} = \mathcal{A}_{ij:}$ and $\mathbf{b}_{jl} = \mathcal{B}_{jl:}$ denote the tube fibers of size $n_3$. 
The $(i, l)$-th tube of the resulting tensor $\mathcal{C}$ is given by:
\begin{equation*}
    \mathbf{c}_{il} = \sum_{j=1}^{n_2} \mathbf{a}_{ij} \circledast \mathbf{b}_{jl},
\end{equation*}
where $\circledast$ denotes the circular convolution. 
According to the convolution theorem, this corresponds to point-wise multiplication in the Fourier domain:
\begin{equation} \label{eq:conv_theorem}
    \mathcal{F}(\mathbf{a} \circledast \mathbf{b}) = \mathcal{F}(\mathbf{a}) \odot \mathcal{F}(\mathbf{b}),
\end{equation}
where $\mathcal{F}$ denotes the Discrete Fourier Transform (DFT).

The computational efficiency arises from the property that the DFT matrix
diagonalizes any circulant matrix.
Let
\(C=\operatorname{circ}(v)\in\mathbb C^{q\times q}\) be the circulant matrix
whose first column is a vector \(v\) of length \(q\). 
Let \(F_q\) denote the unnormalized
DFT matrix under the FFT convention used here. Then
\[
C
=
F_q^{-1}\operatorname{diag}(\fft(v))F_q.
\]
Extending this property to the block-circulant matrix used in the t-product,
we have
\[
(F_{n_3}\otimes I_{n_1})
\bcirc(\mathcal A)
(F_{n_3}^{-1}\otimes I_{n_2})
=
\blkdiag(\hat{\mathcal A}_{::1},\ldots,\hat{\mathcal A}_{::n_3}).
\]

\subsection{Complexity Analysis: Derivation of Fast Fourier Transform}
To justify the efficiency improvement, we recall the Cooley--Tukey recursion. 
The standard DFT of a vector $x$ of length $q$ is defined as $\hat{x}_k = \sum_{j=0}^{q-1} x_j e^{-2\pi i \frac{jk}{q}}$. 
Assuming $q$ is a power of 2, we can split the summation into even and odd indices:
\begin{align*}
    \hat{x}_k &= \sum_{r=0}^{q/2-1} x_{2r} e^{-2\pi i \frac{rk}{q/2}} + e^{-2\pi i \frac{k}{q}} \sum_{r=0}^{q/2-1} x_{2r+1} e^{-2\pi i \frac{rk}{q/2}} \\
              &= E_k + W_q^k O_k,
\end{align*}
where $E_k$ and $O_k$ are the DFTs of the even and odd indexed parts, and $W_q^k$ is the twiddle factor. 
This recursive structure reduces the complexity to $T(q) = 2T(q/2) + \mathcal{O}(q)$, implying $T(q) = \mathcal{O}(q \log q)$.

\subsection{Implementation Algorithm}
Based on the derivation above, the t-product is implemented as in Algorithm \ref{alg:t_product_fft}. 
This reduces the complexity from
\begin{equation*}
    \mathcal{O}(n_1 n_2 n_4 n_3^2)
    \quad\textrm{to}\quad
    \mathcal{O}\bigl(n_1 n_2 n_4 n_3 + (n_1 n_2 + n_2 n_4 + n_1 n_4)\, n_3 \log n_3\bigr),
\end{equation*}
where the first term accounts for the slice-wise matrix products and the second term for the FFTs and the inverse FFT (IFFT).

We denote by $\hat{\mathcal A}=\fft(\mathcal A,[\,],3)$ and $\hat{\mathcal B}=\fft(\mathcal B,[\,],3)$ the Fourier transforms of $\mathcal A$ and $\mathcal B$ along the third dimension.
In this algorithmic description, we use the tensor-index notation
\(\hat{\mathcal A}_{::k}\) for the \(k\)-th frontal slice; equivalently,
\(\hat{\mathcal A}_{::k}=\hat A^{(k)}\), \(\hat{\mathcal B}_{::k}=\hat B^{(k)}\),
and \(\hat{\mathcal C}_{::k}=\hat C^{(k)}\).

\begin{algorithm}[ht]
\caption{Fast t-product via FFT}
\label{alg:t_product_fft}
\begin{algorithmic}[1]
\Require Tensors $\mathcal{A} \in \mathbb{C}^{n_1 \times n_2 \times n_3}$, $\mathcal{B} \in \mathbb{C}^{n_2 \times n_4 \times n_3}$
\Ensure Tensor $\mathcal{C} = \mathcal{A}  \mathcal{B} \in \mathbb{C}^{n_1 \times n_4 \times n_3}$
\State Compute the Fourier transforms
\[
\hat{\mathcal{A}} = \fft(\mathcal{A}, [\,], 3), \qquad
\hat{\mathcal{B}} = \fft(\mathcal{B}, [\,], 3)
\]
\For{$k = 1$ to $n_3$}
    \State Compute
    $
    \hat{\mathcal{C}}_{::k} = \hat{\mathcal{A}}_{::k}  \hat{\mathcal{B}}_{::k}
    $
\EndFor
\State Compute the inverse Fourier transform
\[
\mathcal{C} = \ifft(\hat{\mathcal{C}}, [\,], 3)
\]
\State \Return $\mathcal{C}$
\end{algorithmic}
\end{algorithm}

\section{Proof of Lemma~\ref{lem:orthogonality}}\label{sec:orthogonality}

Using the TRK update rule \eqref{eq:TRK} for the observed system \eqref{eq:doublytls}, we first decompose the residual at iteration $t$. By substituting $\mathcal{X}^t = \mathcal{E}^t + \mathcal{X}^\star$, we have the following.
\begin{align*}
\tilde{\mathcal{A}}_{i_t::}\mathcal{X}^t - \tilde{\mathcal{B}}_{i_t::} &= \tilde{\mathcal{A}}_{i_t::}(\mathcal{E}^t + \mathcal{X}^\star) - \tilde{\mathcal{B}}_{i_t::} \\
&= \tilde{\mathcal{A}}_{i_t::}\mathcal{E}^t + (\mathcal{A}_{i_t::} + (\mathcal E_A)_{i_t::})\mathcal{X}^\star - (\mathcal{B}_{i_t::} + (\mathcal E_B)_{i_t::}) \\
&= \tilde{\mathcal{A}}_{i_t::}\mathcal{E}^t + (\mathcal{A}_{i_t::}\mathcal{X}^\star - \mathcal{B}_{i_t::}) + ((\mathcal E_A)_{i_t::} \mathcal{X}^\star - (\mathcal E_B)_{i_t::}).
\end{align*}
Since $\mathcal{X}^\star$ is the exact solution to the noiseless system, $\mathcal{A}_{i_t::}\mathcal{X}^\star - \mathcal{B}_{i_t::} = 0$. Let $\mathcal{N}_{i_t} = (\mathcal E_A)_{i_t::} \mathcal{X}^\star - (\mathcal E_B)_{i_t::}$ denote the local noise component tensor. 
Substituting this back into the error update yields:
\begin{equation} \label{eq:error_update_tensor}
\mathcal{E}^{t+1} = \mathcal{E}^t - \tilde{\mathcal{A}}_{i_t::}^* (\tilde{\mathcal{A}}_{i_t::}\tilde{\mathcal{A}}_{i_t::}^*)^{-1} (\tilde{\mathcal{A}}_{i_t::}\mathcal{E}^t + \mathcal{N}_{i_t}).
\end{equation}

To evaluate the squared Frobenius norm \(\|\mathcal E^{t+1}\|_F^2\),
we transform the system into the Fourier domain using the notation introduced above.
Let \(\hat E^{t,(k)}\) and \(\hat N_{i_t}^{(k)}\) denote the \(k\)-th frontal slices
of \(\hat{\mathcal E}^t\) and \(\hat{\mathcal N}_{i_t}\), respectively.

By the properties of the t-product in the Fourier domain (see Fact 1 in \cite{MaMolitor2022}), the tensor operations map to slice-wise matrix operations for each frequency $k \in [n_3]$. 
Thus, equation \eqref{eq:error_update_tensor} decouples into $n_3$ independent matrix equations as follows.
\begin{equation*} 
\hat{E}^{t+1, (k)} = \hat{E}^{t, (k)} - (\hat{\tilde{A}}_{i_t:}^{(k)})^* \left( \hat{\tilde{A}}_{i_t:}^{(k)} (\hat{\tilde{A}}_{i_t:}^{(k)})^* \right)^{-1} \left( \hat{\tilde{A}}_{i_t:}^{(k)} \hat{E}^{t, (k)} + \hat{N}_{i_t}^{(k)} \right).
\end{equation*}

Let 
\begin{equation*}
    P_{i_t}^{(k)} = (\hat{\tilde{A}}_{i_t:}^{(k)})^* \left( \hat{\tilde{A}}_{i_t:}^{(k)} (\hat{\tilde{A}}_{i_t:}^{(k)})^* \right)^{-1} \hat{\tilde{A}}_{i_t:}^{(k)}.
\end{equation*} 
In the standard matrix domain, $P_{i_t}^{(k)}$ is an orthogonal projection matrix onto the row space of $\hat{\tilde{A}}_{i_t:}^{(k)}$. As an orthogonal projector, it satisfies idempotency ($(P_{i_t}^{(k)})^2 = P_{i_t}^{(k)}$) and self-adjointness ($(P_{i_t}^{(k)})^* = P_{i_t}^{(k)}$). 
We rewrite the error update for the $k$-th frontal slice as follows.
\begin{equation*}
\hat{E}^{t+1, (k)} = (I - P_{i_t}^{(k)})\hat{E}^{t, (k)} - (\hat{\tilde{A}}_{i_t:}^{(k)})^* \left( \hat{\tilde{A}}_{i_t:}^{(k)} (\hat{\tilde{A}}_{i_t:}^{(k)})^* \right)^{-1} \hat{N}_{i_t}^{(k)}.
\end{equation*}

To compute the squared Frobenius norm of $\hat{E}^{t+1, (k)}$, we expand the norm of the right-hand side. 
The cross-term is 
\begin{equation}\label{eq:cross}
    \langle (I - P_{i_t}^{(k)})\hat{E}^{t, (k)}, (\hat{\tilde{A}}_{i_t:}^{(k)})^* \left( \hat{\tilde{A}}_{i_t:}^{(k)} (\hat{\tilde{A}}_{i_t:}^{(k)})^* \right)^{-1} \hat{N}_{i_t}^{(k)} \rangle.
\end{equation}
Since $P_{i_t}^{(k)}$ is self-adjoint, we have $(I - P_{i_t}^{(k)})^* = I - P_{i_t}^{(k)}$.
To compute the cross-term \eqref{eq:cross}, we use the adjoint identity for the inner product.
Then \eqref{eq:cross} becomes
\begin{equation*}
    \langle \hat{E}^{t, (k)}, (I - P_{i_t}^{(k)})^*(\hat{\tilde{A}}_{i_t:}^{(k)})^* \left( \hat{\tilde{A}}_{i_t:}^{(k)} (\hat{\tilde{A}}_{i_t:}^{(k)})^* \right)^{-1} \hat{N}_{i_t}^{(k)} \rangle.
\end{equation*}
Since
\begin{align*}
(I - P_{i_t}^{(k)})^* (\hat{\tilde{A}}_{i_t:}^{(k)})^* &= (I - P_{i_t}^{(k)}) (\hat{\tilde{A}}_{i_t:}^{(k)})^* \\
&= (\hat{\tilde{A}}_{i_t:}^{(k)})^* - P_{i_t}^{(k)} (\hat{\tilde{A}}_{i_t:}^{(k)})^* \\
&= (\hat{\tilde{A}}_{i_t:}^{(k)})^* - (\hat{\tilde{A}}_{i_t:}^{(k)})^* \left( \hat{\tilde{A}}_{i_t:}^{(k)} (\hat{\tilde{A}}_{i_t:}^{(k)})^* \right)^{-1} \left[ \hat{\tilde{A}}_{i_t:}^{(k)} (\hat{\tilde{A}}_{i_t:}^{(k)})^* \right] \\
&= (\hat{\tilde{A}}_{i_t:}^{(k)})^* - (\hat{\tilde{A}}_{i_t:}^{(k)})^* I \\
&= 0,
\end{align*}
the projected error $(I - P_{i_t}^{(k)})\hat{E}^{t, (k)}$ and the noise term $(\hat{\tilde{A}}_{i_t:}^{(k)})^* \left( \hat{\tilde{A}}_{i_t:}^{(k)} (\hat{\tilde{A}}_{i_t:}^{(k)})^* \right)^{-1} \hat{N}_{i_t}^{(k)}$ are orthogonal. 
Consequently, the Pythagorean theorem holds in the Fourier domain as follows.
\begin{equation} \label{eq:slice_pythagorean}
\|\hat{E}^{t+1, (k)}\|_F^2 = \|(I - P_{i_t}^{(k)})\hat{E}^{t, (k)}\|_F^2 + \left\lVert (\hat{\tilde{A}}_{i_t:}^{(k)})^* \left( \hat{\tilde{A}}_{i_t:}^{(k)} (\hat{\tilde{A}}_{i_t:}^{(k)})^* \right)^{-1} \hat{N}_{i_t}^{(k)} \right\rVert_F^2.
\end{equation}

Finally, we sum equation \eqref{eq:slice_pythagorean} over all frequencies $k \in [n_3]$ and divide by $n_3$. 
By Parseval's theorem, the scaled sum of the squared Frobenius norms of the Fourier slices is equal to the squared Frobenius norm of the original spatial tensor, i.e., $\|\mathcal{Y}\|_F^2 = \frac{1}{n_3} \sum_{k=1}^{n_3} \|\hat{Y}^{(k)}\|_F^2$ \cite{MaMolitor2022}.
Note that the set of slice-wise projection matrices $\{P_{i_t}^{(k)}\}_{k=1}^{n_3}$ precisely corresponds to the tensor-domain projection operator $\tilde{\mathcal{P}}_{i_t} = \tilde{\mathcal{A}}_{i_t::}^* (\tilde{\mathcal{A}}_{i_t::}\tilde{\mathcal{A}}_{i_t::}^*)^{-1} \tilde{\mathcal{A}}_{i_t::}$. 
Applying this relation to each term maps the decomposed Fourier equations back to the spatial tensor domain, yielding the desired orthogonal decomposition:
\begin{equation*}
\|\mathcal{E}^{t+1}\|_F^2 = \|(\mathcal{I} - \tilde{\mathcal{P}}_{i_t})\mathcal{E}^t\|_F^2 + \left\lVert \tilde{\mathcal{A}}_{i_t::}^* (\tilde{\mathcal{A}}_{i_t::}\tilde{\mathcal{A}}_{i_t::}^*)^{-1} \mathcal{N}_{i_t} \right\rVert_F^2.
\end{equation*}

\section{Proof of Theorem~\ref{thm:upper_recursion}}\label{sec:upper_recursion}

Taking the total expectation of the orthogonal decomposition established in Lemma~\ref{lem:orthogonality} with respect to the random variable $i_t \in [n_1]$, which is sampled independently at each iteration according to the probability $p_i = \|\tilde{\mathcal{A}}_{i::}\|_F^2 / \|\tilde{\mathcal{A}}\|_F^2$ for $i \in [n_1]$, we have the following.
\begin{equation} \label{eq:expectation_decomposition}
\mathbb{E}\left[ \|\mathcal{E}^{t+1}\|_F^2 \right]
=
\mathbb{E}\left[ \|(\mathcal{I} - \tilde{\mathcal{P}}_{i_t})\mathcal{E}^t\|_F^2 \right]
+
\mathbb{E}\left[ \left\| \tilde{\mathcal{A}}_{i_t::}^* (\tilde{\mathcal{A}}_{i_t::} \tilde{\mathcal{A}}_{i_t::}^*)^{-1} \mathcal{N}_{i_t} \right\|_F^2 \right],
\end{equation}
where $\mathcal{N}_{i_t} = (\mathcal E_A)_{i_t::} \mathcal{X}^\star - (\mathcal E_B)_{i_t::}$.

For the first term, we apply the Pythagorean identity
\[
\|(\mathcal{I} - \tilde{\mathcal{P}}_{i_t})\mathcal{E}^t\|_F^2
=
\|\mathcal{E}^t\|_F^2 - \|\tilde{\mathcal{P}}_{i_t} \mathcal{E}^t\|_F^2.
\]
To calculate the expected reduction $\mathbb{E}[\|\tilde{\mathcal{P}}_{i_t} \mathcal{E}^t\|_F^2]$, we utilize the selection probability $p_i = \|\tilde{\mathcal{A}}_{i::}\|_F^2 / \|\tilde{\mathcal{A}}\|_F^2$. 
Specifically, since terms with \(p_i=0\) do not contribute to the expectation, it is enough to consider \(i\in\mathcal I_+\). For each such \(i\), its squared Frobenius norm is evaluated via Parseval's theorem as follows.
\begin{align*}
\|\tilde{\mathcal{P}}_i \mathcal{E}^t\|_F^2
&=
\frac{1}{n_3} \sum_{k=1}^{n_3}
\left\|
\hat{\tilde{A}}_{i:}^{(k)*}
\left( \hat{\tilde{A}}_{i:}^{(k)} \hat{\tilde{A}}_{i:}^{(k)*} \right)^{-1}
\hat{\tilde{A}}_{i:}^{(k)} \hat E^{t,(k)}
\right\|_F^2 \\
&=
\frac{1}{n_3} \sum_{k=1}^{n_3}
\left\|
\hat{\tilde{A}}_{i:}^{(k)*}
\frac{\hat{\tilde{A}}_{i:}^{(k)} \hat E^{t,(k)}}{\|\hat{\tilde{A}}_{i:}^{(k)}\|_F^2}
\right\|_F^2 \\
&=
\frac{1}{n_3} \sum_{k=1}^{n_3}
\frac{\|\hat{\tilde{A}}_{i:}^{(k)*}\|_F^2 \, \|\hat{\tilde{A}}_{i:}^{(k)} \hat E^{t,(k)}\|_F^2}{\|\hat{\tilde{A}}_{i:}^{(k)}\|_F^4} \\
&=
\frac{1}{n_3} \sum_{k=1}^{n_3}
\frac{\|\hat{\tilde{A}}_{i:}^{(k)} \hat E^{t,(k)}\|_F^2}{\|\hat{\tilde{A}}_{i:}^{(k)}\|_F^2}.
\end{align*}

For the contraction term, we use the following consequence of Parseval's theorem:
for every \(k\in[n_3]\),
\[
\|\hat{\tilde A}_{i:}^{(k)}\|_F^2
\le
\sum_{\ell=1}^{n_3}
\|\hat{\tilde A}_{i:}^{(\ell)}\|_F^2
=
n_3\|\tilde{\mathcal A}_{i::}\|_F^2.
\]

Therefore,
\begin{equation}\label{eq:contraction_freq_bound}
\frac{1}{\|\hat{\tilde A}_{i:}^{(k)}\|_F^2}
\ge
\frac{1}{n_3\|\tilde{\mathcal A}_{i::}\|_F^2}
\qquad \text{for all } k\in[n_3].
\end{equation}

Substituting \eqref{eq:contraction_freq_bound} into the previous expression yields
\begin{align*}
\|\tilde{\mathcal{P}}_i \mathcal{E}^t\|_F^2
&=
\frac{1}{n_3} \sum_{k=1}^{n_3}
\frac{\|\hat{\tilde{A}}_{i:}^{(k)} \hat E^{t,(k)}\|_F^2}
{\|\hat{\tilde{A}}_{i:}^{(k)}\|_F^2} \\
&\ge
\frac{1}{n_3\|\tilde{\mathcal{A}}_{i::}\|_F^2}
\left(
\frac{1}{n_3} \sum_{k=1}^{n_3}
\|\hat{\tilde{A}}_{i:}^{(k)} \hat E^{t,(k)}\|_F^2
\right).
\end{align*}
By Parseval's theorem,
\[
\frac{1}{n_3} \sum_{k=1}^{n_3}
\|\hat{\tilde{A}}_{i:}^{(k)} \hat E^{t,(k)}\|_F^2
=
\|\tilde{\mathcal{A}}_{i::} \mathcal{E}^t\|_F^2.
\]
Therefore,
\[
\|\tilde{\mathcal{P}}_i \mathcal{E}^t\|_F^2
\ge
\frac{\|\tilde{\mathcal{A}}_{i::} \mathcal{E}^t\|_F^2}
{n_3\|\tilde{\mathcal{A}}_{i::}\|_F^2}.
\]

Taking total expectation and using the sampling probabilities, we obtain
\begin{align*}
\mathbb{E}\left[ \|\tilde{\mathcal{P}}_{i_t} \mathcal{E}^t\|_F^2 \right]
&=
\mathbb{E}\left[ \sum_{i\in\mathcal I_+} p_i \|\tilde{\mathcal{P}}_i \mathcal{E}^t\|_F^2 \right] \\
&\ge
\mathbb{E}\left[
\sum_{i\in\mathcal I_+}
\frac{\|\tilde{\mathcal{A}}_{i::}\|_F^2}{\|\tilde{\mathcal{A}}\|_F^2}
\left(
\frac{\|\tilde{\mathcal{A}}_{i::} \mathcal{E}^t\|_F^2}
{n_3\|\tilde{\mathcal{A}}_{i::}\|_F^2}
\right)
\right] \\
&=
\mathbb{E}\left[
\frac{1}{n_3\|\tilde{\mathcal{A}}\|_F^2}
\sum_{i=1}^{n_1}
\|\tilde{\mathcal{A}}_{i::} \mathcal{E}^t\|_F^2
\right].
\end{align*}

Next, the row-slice norms add up to the norm of the full product:
\[
\sum_{i=1}^{n_1} \|\tilde{\mathcal{A}}_{i::} \mathcal{E}^t\|_F^2
=
\|\tilde{\mathcal{A}} \mathcal{E}^t\|_F^2.
\]

Moreover, by the definition of the t-product,
\[
\unfold(\tilde{\mathcal{A}}\mathcal{E}^t)
=
\bcirc(\tilde{\mathcal{A}})\unfold(\mathcal{E}^t).
\]
Since the unfold operator only rearranges the entries, it preserves the Frobenius norm. Hence,
\[
\|\tilde{\mathcal{A}}\mathcal{E}^t\|_F^2
=
\|\unfold(\tilde{\mathcal{A}}\mathcal{E}^t)\|_F^2
=
\|\bcirc(\tilde{\mathcal{A}})\unfold(\mathcal{E}^t)\|_F^2.
\]
Furthermore,
\[
\|\bcirc(\tilde{\mathcal{A}})\unfold(\mathcal{E}^t)\|_F^2
\ge
\sigma_{\min}^2(\bcirc(\tilde{\mathcal{A}}))
\|\unfold(\mathcal{E}^t)\|_F^2
=
\sigma_{\min}^2(\bcirc(\tilde{\mathcal{A}}))
\|\mathcal{E}^t\|_F^2.
\]
Therefore,
\[
\mathbb{E}\left[ \|\tilde{\mathcal{P}}_{i_t} \mathcal{E}^t\|_F^2 \right]
\ge
\frac{\sigma_{\min}^2(\bcirc(\tilde{\mathcal{A}}))}
{n_3\|\tilde{\mathcal{A}}\|_F^2}
\mathbb{E}\left[\|\mathcal{E}^t\|_F^2\right].
\]
This yields the expected contraction bound as follows.
\[
\mathbb{E}\left[
\|(\mathcal I-\tilde{\mathcal P}_{i_t})\mathcal E^t\|_F^2
\right]
\le
\left(
1-
\frac{\sigma_{\min}^2(\bcirc(\tilde{\mathcal{A}}))}
{n_3\|\tilde{\mathcal{A}}\|_F^2}
\right)
\mathbb{E}\left[\|\mathcal E^t\|_F^2\right].
\]

For the second term in \eqref{eq:expectation_decomposition}, we first apply Parseval's theorem to evaluate the squared Frobenius norm of the projected noise in the Fourier domain as follows.
\begin{align*}
\left\| \tilde{\mathcal{A}}_{i::}^* (\tilde{\mathcal{A}}_{i::} \tilde{\mathcal{A}}_{i::}^*)^{-1} \mathcal{N}_i \right\|_F^2 
&= \frac{1}{n_3} \sum_{k=1}^{n_3} \left\| \hat{\tilde{A}}_{i:}^{(k)*} \left( \hat{\tilde{A}}_{i:}^{(k)} \hat{\tilde{A}}_{i:}^{(k)*} \right)^{-1} \hat{N}_i^{(k)} \right\|_F^2 \\
&= \frac{1}{n_3} \sum_{k=1}^{n_3} \left\| \hat{\tilde{A}}_{i:}^{(k)*} \frac{\hat{N}_i^{(k)}}{\|\hat{\tilde{A}}_{i:}^{(k)}\|_F^2} \right\|_F^2 \\
&= \frac{1}{n_3} \sum_{k=1}^{n_3} \frac{\|\hat{N}_i^{(k)}\|_F^2}{\|\hat{\tilde{A}}_{i:}^{(k)}\|_F^2},
\end{align*}

where the last equality follows from the outer-product identity
\[
\|\hat{\tilde A}_{i:}^{(k)*}\hat N_i^{(k)}\|_F^2
=
\|\hat{\tilde A}_{i:}^{(k)}\|_F^2
\|\hat N_i^{(k)}\|_F^2.
\]

For the perturbation term, we need an upper bound on the reciprocal Fourier-slice row norms.
By the definition of \(\gamma_i\) and Parseval's theorem, we obtain

\[
\frac{1}{\|\hat{\tilde{A}}_{i:}^{(k)}\|_F^2}
\le
\frac{1}{\min\limits_{1 \le \ell \le n_3} \|\hat{\tilde{A}}_{i:}^{(\ell)}\|_F^2}
\le
\frac{\gamma_i}{\|\tilde{\mathcal{A}}_{i::}\|_F^2}
\qquad \text{for all } k \in [n_3].
\]
Substituting this into the summation gives
\begin{align*}
\frac{1}{n_3} \sum_{k=1}^{n_3} \frac{\|\hat{N}_i^{(k)}\|_F^2}{\|\hat{\tilde{A}}_{i:}^{(k)}\|_F^2}
&\le
\frac{1}{n_3} \sum_{k=1}^{n_3}
\left( \frac{\gamma_i}{\|\tilde{\mathcal{A}}_{i::}\|_F^2} \right)
\|\hat{N}_i^{(k)}\|_F^2 \\
&=
\frac{\gamma_i}{\|\tilde{\mathcal{A}}_{i::}\|_F^2}
\left( \frac{1}{n_3} \sum_{k=1}^{n_3} \|\hat{N}_i^{(k)}\|_F^2 \right)
=
\gamma_i \frac{\|\mathcal{N}_i\|_F^2}{\|\tilde{\mathcal{A}}_{i::}\|_F^2},
\end{align*}
where we used Parseval's theorem,
\[
\|\mathcal N_i\|_F^2
=
\frac{1}{n_3}\sum_{k=1}^{n_3}\|\hat N_i^{(k)}\|_F^2.
\]

By the law of total expectation, we obtain the following. The summand below does not depend on \(\mathcal E^t\), so the outer expectation can be dropped after the first equality.
\begin{align*}
\mathbb{E}\left[
\left\|
\tilde{\mathcal{A}}_{i_t::}^*
(\tilde{\mathcal{A}}_{i_t::} \tilde{\mathcal{A}}_{i_t::}^*)^{-1}
\mathcal{N}_{i_t}
\right\|_F^2
\right]
&=
\sum_{i\in\mathcal I_+}
\frac{\|\tilde{\mathcal{A}}_{i::}\|_F^2}{\|\tilde{\mathcal{A}}\|_F^2}
\left\|
\tilde{\mathcal{A}}_{i::}^*
(\tilde{\mathcal{A}}_{i::}\tilde{\mathcal{A}}_{i::}^*)^{-1}
\mathcal{N}_i
\right\|_F^2 \\
&\le
\sum_{i\in\mathcal I_+}
\frac{\|\tilde{\mathcal{A}}_{i::}\|_F^2}{\|\tilde{\mathcal{A}}\|_F^2}
\gamma_i
\frac{\|\mathcal{N}_i\|_F^2}{\|\tilde{\mathcal{A}}_{i::}\|_F^2}
\\
&\le
\frac{\gamma}{\|\tilde{\mathcal{A}}\|_F^2}
\sum_{i\in\mathcal I_+}
\|\mathcal{N}_i\|_F^2 \\
&\le
\frac{\gamma}{\|\tilde{\mathcal{A}}\|_F^2}
\sum_{i=1}^{n_1}
\|\mathcal{N}_i\|_F^2 \\
&=
\frac{\gamma \|\mathcal E_A \mathcal{X}^\star - \mathcal E_B\|_F^2}
{\|\tilde{\mathcal{A}}\|_F^2}.
\end{align*}
Substituting the bounds for both terms back into \eqref{eq:expectation_decomposition} completes the proof.

\section{Proof of Theorem~\ref{thm:sdtrk_recursion}}\label{sec:sdtrk_recursion}

We first decompose the SD-TRK error update into a damped error-propagation term
and a noise-injection term. We then expand the squared Frobenius norm in the
Fourier domain, bound the cross term using Young's inequality, and finally average
over the randomized row selection.

By Lemma~\ref{lem:sdtrk_error_update}, the error tensor satisfies
\[
\mathcal{E}^{t+1}
=
\left(\mathcal{I}-\omega \tilde{\mathcal Q}_{i_t}^{\lambda}\right)\mathcal{E}^{t}
-
\omega
\tilde{\mathcal A}_{i_t::}^{*}
\left(
\tilde{\mathcal A}_{i_t::}
\tilde{\mathcal A}_{i_t::}^{*}
+\lambda \mathcal{I}
\right)^{-1}
\mathcal{N}_{i_t}.
\]
To estimate \(\|\mathcal{E}^{t+1}\|_F^2\), we pass to the Fourier domain, as in
the proof of Lemma~\ref{lem:orthogonality}. For any tensor \(\mathcal Y\), let
\(\hat{\mathcal Y}\) denote its discrete Fourier transform along the third mode,
and let \(\hat Y^{(k)}\) be its \(k\)-th frontal slice. By the properties of the
t-product in the Fourier domain (see Fact 1 in \cite{MaMolitor2022}), for
each \(k\in[n_3]\),
\[
\hat E^{t+1,(k)}
=
\left(
I-\omega \hat Q_{i_t}^{\lambda,(k)}
\right)
\hat E^{t,(k)}
-
\omega
\bigl(\hat{\tilde A}_{i_t:}^{(k)}\bigr)^*
\left(
\hat{\tilde A}_{i_t:}^{(k)}
\bigl(\hat{\tilde A}_{i_t:}^{(k)}\bigr)^*
+\lambda I
\right)^{-1}
\hat N_{i_t}^{(k)},
\]
where
\[
\hat Q_{i_t}^{\lambda,(k)}
:=
\bigl(\hat{\tilde A}_{i_t:}^{(k)}\bigr)^*
\left(
\hat{\tilde A}_{i_t:}^{(k)}
\bigl(\hat{\tilde A}_{i_t:}^{(k)}\bigr)^*
+\lambda I
\right)^{-1}
\hat{\tilde A}_{i_t:}^{(k)}.
\]
Here, \(\hat{\tilde A}_{i_t:}^{(k)}\) denotes the \(i_t\)-th row of the matrix \(\hat{\tilde A}^{(k)}\).

Now define
\[
U^{(k)}
:=
\left(
I-\omega \hat Q_{i_t}^{\lambda,(k)}
\right)
\hat E^{t,(k)},
\qquad
V^{(k)}
:=
-
\omega
\bigl(\hat{\tilde A}_{i_t:}^{(k)}\bigr)^*
\left(
\hat{\tilde A}_{i_t:}^{(k)}
\bigl(\hat{\tilde A}_{i_t:}^{(k)}\bigr)^*
+\lambda I
\right)^{-1}
\hat N_{i_t}^{(k)}.
\]
Then
\[
\hat E^{t+1,(k)}=U^{(k)}+V^{(k)}.
\]

We next expand the squared Frobenius norm of \(\hat E^{t+1,(k)}\) using the standard matrix Frobenius inner product
\[
\langle X,Y\rangle_F := \trace(Y^*X).
\]
Since \(\|X\|_F^2=\langle X,X\rangle_F\), we obtain
\[
\begin{aligned}
\bigl\|\hat E^{t+1,(k)}\bigr\|_F^2
&=
\|U^{(k)}+V^{(k)}\|_F^2 \\
&=
\langle U^{(k)}+V^{(k)},\,U^{(k)}+V^{(k)}\rangle_F \\
&=
\langle U^{(k)},U^{(k)}\rangle_F
+\langle U^{(k)},V^{(k)}\rangle_F
+\langle V^{(k)},U^{(k)}\rangle_F
+\langle V^{(k)},V^{(k)}\rangle_F \\
&=
\|U^{(k)}\|_F^2
+\langle U^{(k)},V^{(k)}\rangle_F
+\overline{\langle U^{(k)},V^{(k)}\rangle_F}
+\|V^{(k)}\|_F^2 \\
&=
\|U^{(k)}\|_F^2
+2\operatorname{Re}\langle U^{(k)},V^{(k)}\rangle_F
+\|V^{(k)}\|_F^2.
\end{aligned}
\]

Unlike the proof of Lemma~\ref{lem:orthogonality}, the cross term does not vanish here, because \(\hat Q_{i_t}^{\lambda,(k)}\) is not an orthogonal projection matrix in general when \(\lambda>0\). Therefore, we estimate the cross term by Young's inequality. For any \(\alpha>0\),
\[
2\operatorname{Re}\langle U^{(k)},V^{(k)}\rangle_F
\le
2\bigl|\langle U^{(k)},V^{(k)}\rangle_F\bigr|
\le
2\|U^{(k)}\|_F\|V^{(k)}\|_F
\le
\alpha \|U^{(k)}\|_F^2
+
\frac{1}{\alpha}\|V^{(k)}\|_F^2,
\]
where we used the Cauchy--Schwarz inequality in the second step and Young's inequality in the third step.

Substituting this estimate into the previous identity yields
\[
\begin{aligned}
\bigl\|\hat E^{t+1,(k)}\bigr\|_F^2
&\le
\|U^{(k)}\|_F^2
+
\alpha \|U^{(k)}\|_F^2
+
\|V^{(k)}\|_F^2
+
\frac{1}{\alpha}\|V^{(k)}\|_F^2 \\
&=
(1+\alpha)\|U^{(k)}\|_F^2
+
\left(1+\frac{1}{\alpha}\right)\|V^{(k)}\|_F^2.
\end{aligned}
\]
That is,
\[
\begin{aligned}
\bigl\|\hat E^{t+1,(k)}\bigr\|_F^2
&\le
(1+\alpha)
\left\|
\left(
I-\omega \hat Q_{i_t}^{\lambda,(k)}
\right)
\hat E^{t,(k)}
\right\|_F^2 \\
&\quad
+
\omega^2
\left(1+\frac{1}{\alpha}\right)
\left\|
\bigl(\hat{\tilde A}_{i_t:}^{(k)}\bigr)^*
\left(
\hat{\tilde A}_{i_t:}^{(k)}
\bigl(\hat{\tilde A}_{i_t:}^{(k)}\bigr)^*
+\lambda I
\right)^{-1}
\hat N_{i_t}^{(k)}
\right\|_F^2 .
\end{aligned}
\]

We now sum this inequality over all \(k\in[n_3]\) and divide by \(n_3\). By Parseval's identity,
\[
\|\mathcal Y\|_F^2
=
\frac{1}{n_3}\sum_{k=1}^{n_3}\|\hat Y^{(k)}\|_F^2
\]
for every tensor \(\mathcal Y\). Applying this identity to each term above gives
\[
\begin{aligned}
\|\mathcal E^{t+1}\|_F^2
&=
\frac{1}{n_3}\sum_{k=1}^{n_3}
\bigl\|\hat E^{t+1,(k)}\bigr\|_F^2 \\
&\le
(1+\alpha)
\frac{1}{n_3}\sum_{k=1}^{n_3}
\left\|
\left(
I-\omega \hat Q_{i_t}^{\lambda,(k)}
\right)
\hat E^{t,(k)}
\right\|_F^2 \\
&\quad
+
\omega^2
\left(1+\frac{1}{\alpha}\right)
\frac{1}{n_3}\sum_{k=1}^{n_3}
\left\|
\bigl(\hat{\tilde A}_{i_t:}^{(k)}\bigr)^*
\left(
\hat{\tilde A}_{i_t:}^{(k)}
\bigl(\hat{\tilde A}_{i_t:}^{(k)}\bigr)^*
+\lambda I
\right)^{-1}
\hat N_{i_t}^{(k)}
\right\|_F^2 \\
&=
(1+\alpha)
\left\|
\left(\mathcal I-\omega \tilde{\mathcal Q}_{i_t}^{\lambda}\right)\mathcal E^t
\right\|_F^2 \\
&\quad
+
\omega^2
\left(1+\frac{1}{\alpha}\right)
\left\|
\tilde{\mathcal A}_{i_t::}^{*}
\left(
\tilde{\mathcal A}_{i_t::}
\tilde{\mathcal A}_{i_t::}^{*}
+\lambda \mathcal I
\right)^{-1}
\mathcal N_{i_t}
\right\|_F^2.
\end{aligned}
\]
Hence,
\[
\|\mathcal E^{t+1}\|_F^2
\le
(1+\alpha)
\left\|
\left(\mathcal I-\omega \tilde{\mathcal Q}_{i_t}^{\lambda}\right)\mathcal E^t
\right\|_F^2
+
\omega^2
\left(1+\frac{1}{\alpha}\right)
\left\|
\tilde{\mathcal A}_{i_t::}^{*}
\left(
\tilde{\mathcal A}_{i_t::}
\tilde{\mathcal A}_{i_t::}^{*}
+\lambda \mathcal I
\right)^{-1}
\mathcal N_{i_t}
\right\|_F^2.
\]

Taking the conditional expectation with respect to the sampled index \(i_t\), conditioned on \(\mathcal E^t\), we obtain
\[
\begin{aligned}
\mathbb{E}\!\left[
\|\mathcal{E}^{t+1}\|_F^2
\mid
\mathcal{E}^{t}
\right]
&\le
(1+\alpha)
\mathbb{E}\!\left[
\left\|
\left(\mathcal{I}-\omega \tilde{\mathcal Q}_{i_t}^{\lambda}\right)\mathcal{E}^{t}
\right\|_F^2
\mid
\mathcal{E}^{t}
\right]  \\
&\quad
+
\omega^2\left(1+\frac{1}{\alpha}\right)
\mathbb{E}\!\left[
\left\|
\tilde{\mathcal A}_{i_t::}^{*}
\left(
\tilde{\mathcal A}_{i_t::}
\tilde{\mathcal A}_{i_t::}^{*}
+\lambda \mathcal{I}
\right)^{-1}
\mathcal{N}_{i_t}
\right\|_F^2
\mid
\mathcal{E}^{t}
\right].
\end{aligned}
\]
It remains to estimate the two conditional expectation terms on the right-hand side separately.

By the sampling rule
\[
p_i=\frac{\|\tilde{\mathcal A}_{i::}\|_F^2}{\|\tilde{\mathcal A}\|_F^2},
\qquad i\in[n_1],
\]
the conditional expectation with respect to the sampled index \(i_t\), conditioned on \(\mathcal E^t\), can be written explicitly as follows. Terms with \(p_i=0\) do not contribute to the expectation, so it is enough to sum over \(i\in\mathcal I_+\), where \(\mathcal I_+\) is defined in Theorem~\ref{thm:upper_recursion}. For \(i\in\mathcal I_+\), the operator \(\tilde{\mathcal Q}_i^\lambda\) is well-defined under Assumption~\ref{assump:invertibility}; for \(\lambda>0\), the regularization term also prevents singular normalization factors. We obtain
\[
\begin{aligned}
\mathbb{E}\!\left[
\|\mathcal{E}^{t+1}\|_F^2
\mid
\mathcal{E}^{t}
\right]
&\le
(1+\alpha)
\sum_{i\in\mathcal I_+}
\frac{\|\tilde{\mathcal A}_{i::}\|_F^2}
{\|\tilde{\mathcal A}\|_F^2}
\left\|
\left(\mathcal{I}-\omega \tilde{\mathcal Q}_{i}^\lambda\right)\mathcal{E}^{t}
\right\|_F^2  \\
&\quad
+
\omega^2\left(1+\frac{1}{\alpha}\right)
\sum_{i\in\mathcal I_+}
\frac{\|\tilde{\mathcal A}_{i::}\|_F^2}
{\|\tilde{\mathcal A}\|_F^2}
\left\|
\tilde{\mathcal A}_{i::}^{*}
\left(
\tilde{\mathcal A}_{i::}
\tilde{\mathcal A}_{i::}^{*}
+\lambda \mathcal{I}
\right)^{-1}
\mathcal N_i
\right\|_F^2 .
\end{aligned}
\]

We first estimate the contraction term
\[
\sum_{i\in\mathcal I_+}
\frac{\|\tilde{\mathcal A}_{i::}\|_F^2}
{\|\tilde{\mathcal A}\|_F^2}
\left\|
\left(\mathcal I-\omega \tilde{\mathcal Q}_{i}^\lambda\right)\mathcal E^t
\right\|_F^2.
\]
As in the proof of Lemma~\ref{lem:orthogonality}, we pass to the Fourier domain.
For each \(k\in[n_3]\), the tensor operator \(\tilde{\mathcal Q}_{i}^{\lambda}\)
corresponds to the matrix
\[
\hat Q_i^{\lambda,(k)}
=
\bigl(\hat{\tilde A}_{i:}^{(k)}\bigr)^*
\left(
\hat{\tilde A}_{i:}^{(k)}
\bigl(\hat{\tilde A}_{i:}^{(k)}\bigr)^*
+\lambda I
\right)^{-1}
\hat{\tilde A}_{i:}^{(k)} .
\]
Here, \(\hat{\tilde A}_{i:}^{(k)}\) is the \(i\)-th row of the matrix
\(\hat{\tilde A}^{(k)}\). For notational simplicity, fix \(i\) and \(k\), and write
\[
a:=\hat{\tilde A}_{i:}^{(k)}\in\mathbb C^{1\times n_2},
\qquad
x:=\hat E^{t,(k)}\in\mathbb C^{n_2\times n_4},
\qquad
Q:=\frac{a^*a}{\|a\|_2^2+\lambda}.
\]
Then
\[
\left\|
\left(I-\omega Q\right)x
\right\|_F^2
=
\trace
\bigl(
x^*(I-\omega Q)^*(I-\omega Q)x
\bigr).
\]
Since \(Q^*=Q\), we obtain
\[
\begin{aligned}
\left\|
\left(I-\omega Q\right)x
\right\|_F^2
&=
\trace
\bigl(
x^*(I-\omega Q^*-\omega Q+\omega^2Q^*Q)x
\bigr) \\
&=
\|x\|_F^2
-2\omega\,\trace(x^*Qx)
+\omega^2\trace(x^*Q^2x).
\end{aligned}
\]
Moreover,
\[
\trace(x^*Qx)
=
\frac{\|ax\|_F^2}{\|a\|_2^2+\lambda},
\]
and, since
\[
Q^2
=
\frac{\|a\|_2^2}{\|a\|_2^2+\lambda}\,Q,
\]
we also have
\[
\trace(x^*Q^2x)
=
\frac{\|a\|_2^2}{(\|a\|_2^2+\lambda)^2}\|ax\|_F^2.
\]
Substituting these identities gives
\[
\begin{aligned}
\left\|
\left(I-\omega Q\right)x
\right\|_F^2
&=
\|x\|_F^2
-
\frac{2\omega}{\|a\|_2^2+\lambda}\|ax\|_F^2
+
\frac{\omega^2\|a\|_2^2}{(\|a\|_2^2+\lambda)^2}\|ax\|_F^2 \\
&=
\|x\|_F^2
-
\frac{(2\omega-\omega^2)\|a\|_2^2+2\omega\lambda}
{(\|a\|_2^2+\lambda)^2}
\|ax\|_F^2.
\end{aligned}
\]
Let
\[
S:=\sigma_{\max}^2(\bcirc(\tilde{\mathcal A})),
\qquad
s:=\|a\|_2^2 .
\]
Then \(0\le s\le S\), since
\[
\|a\|_2
\le
\sigma_{\max}\bigl(\hat{\tilde A}^{(k)}\bigr)
\le
\sigma_{\max}(\bcirc(\tilde{\mathcal A})),
\]
where the second inequality follows from the block-diagonalization of \(\bcirc(\tilde{\mathcal A})\) in Appendix~\ref{sec:appendix_fft}. Define
\[
c(s):=
\frac{(2\omega-\omega^2)s+2\omega\lambda}{(s+\lambda)^2}.
\]
Since
\[
c'(s)
=
-\frac{(2\omega-\omega^2)s+(2\omega+\omega^2)\lambda}{(s+\lambda)^3}
\le 0,
\]
the function \(c(s)\) is nonincreasing on its domain \(s+\lambda>0\); in particular, for the admissible Fourier row norms considered here, \(c(s)\ge c(S)\). 
Hence
\[
c(s)\ge c(S)
=
\frac{(2\omega-\omega^2)S+2\omega\lambda}{(S+\lambda)^2}
\ge
\frac{2\omega-\omega^2}{S+\lambda}.
\]
Therefore,
\[
\left\|
\left(I-\omega Q\right)x
\right\|_F^2
\le
\|x\|_F^2
-
\frac{2\omega-\omega^2}
{\sigma_{\max}^2(\bcirc(\tilde{\mathcal A}))+\lambda}
\|ax\|_F^2.
\]
Therefore, for each frequency slice \(k\),
\[
\left\|
\left(I-\omega \hat Q_i^{\lambda,(k)}\right)\hat E^{t,(k)}
\right\|_F^2
\le
\left\|
\hat E^{t,(k)}
\right\|_F^2
-
\frac{2\omega-\omega^2}
{\sigma_{\max}^2(\bcirc(\tilde{\mathcal A}))+\lambda}
\left\|
\hat{\tilde A}_{i:}^{(k)}
\hat E^{t,(k)}
\right\|_F^2.
\]

Applying Parseval's identity to the slice-wise estimate obtained above, we obtain, for each \(i\in\mathcal I_+\),
\[
\left\|
\left(\mathcal I-\omega \tilde{\mathcal Q}_{i}^\lambda\right)\mathcal E^t
\right\|_F^2
\le
\|\mathcal E^t\|_F^2
-
\frac{2\omega-\omega^2}
{\sigma_{\max}^2(\bcirc(\tilde{\mathcal A}))+\lambda}
\left\|
\tilde{\mathcal A}_{i::}\mathcal E^t
\right\|_F^2 .
\]
Multiplying by \(p_i=\|\tilde{\mathcal A}_{i::}\|_F^2/\|\tilde{\mathcal A}\|_F^2\) and summing over \(i\in\mathcal I_+\), we get the bound below. Note that rows with \(p_i=0\) satisfy \(\tilde{\mathcal A}_{i::}\mathcal E^t=0\), so extending the sum on the right-hand side to all \(i\in[n_1]\) does not change its value.
\[
\begin{aligned}
\sum_{i\in\mathcal I_+}
\frac{\|\tilde{\mathcal A}_{i::}\|_F^2}
{\|\tilde{\mathcal A}\|_F^2}
\left\|
\left(\mathcal I-\omega \tilde{\mathcal Q}_{i}^\lambda\right)\mathcal E^t
\right\|_F^2
&\le
\|\mathcal E^t\|_F^2
-
\frac{2\omega-\omega^2}
{\sigma_{\max}^2(\bcirc(\tilde{\mathcal A}))+\lambda}
\sum_{i=1}^{n_1}
\frac{\|\tilde{\mathcal A}_{i::}\|_F^2}
{\|\tilde{\mathcal A}\|_F^2}
\left\|
\tilde{\mathcal A}_{i::}\mathcal E^t
\right\|_F^2 .
\end{aligned}
\]
Therefore,
\[
\mathbb E\!\left[
\left\|
\left(\mathcal I-\omega \tilde{\mathcal Q}_{i_t}^{\lambda}\right)\mathcal E^t
\right\|_F^2
\,\middle|\,\mathcal E^t
\right]
\le
\|\mathcal E^t\|_F^2
-
\frac{2\omega-\omega^2}
{\sigma_{\max}^2(\bcirc(\tilde{\mathcal A}))+\lambda}
\sum_{i=1}^{n_1}
\frac{\|\tilde{\mathcal A}_{i::}\|_F^2}
{\|\tilde{\mathcal A}\|_F^2}
\left\|
\tilde{\mathcal A}_{i::}\mathcal E^t
\right\|_F^2 .
\]
To close this estimate, it remains to bound the weighted term on the right-hand side from below.

We next estimate the perturbation term
\[
\sum_{i\in\mathcal I_+}
\frac{\|\tilde{\mathcal A}_{i::}\|_F^2}
{\|\tilde{\mathcal A}\|_F^2}
\left\|
\tilde{\mathcal A}_{i::}^{*}
\left(
\tilde{\mathcal A}_{i::}
\tilde{\mathcal A}_{i::}^{*}
+\lambda \mathcal I
\right)^{-1}
\mathcal N_i
\right\|_F^2 .
\]
For each \(i\in\mathcal I_+\), define
\[
\mathcal M_i^\lambda
:=
\tilde{\mathcal A}_{i::}^{*}
\left(
\tilde{\mathcal A}_{i::}
\tilde{\mathcal A}_{i::}^{*}
+\lambda \mathcal I
\right)^{-1}.
\]
In the Fourier domain, the \(k\)-th frontal slice corresponding to
\(\mathcal M_i^\lambda\) is denoted by
\[
\hat M_i^{\lambda,(k)}
:=
\hat{\tilde A}_{i:}^{(k)*}
\left(
\hat{\tilde A}_{i:}^{(k)}
\hat{\tilde A}_{i:}^{(k)*}
+\lambda I
\right)^{-1}.
\]
Then, by Parseval's identity and the matrix Frobenius submultiplicative property in each Fourier slice,
\[
\begin{aligned}
\left\|\mathcal M_i^\lambda \mathcal N_i\right\|_F^2
&=
\frac{1}{n_3}
\sum_{k=1}^{n_3}
\left\|
\hat M_i^{\lambda,(k)}\hat N_i^{(k)}
\right\|_F^2  \\
&\le
\frac{1}{n_3}
\sum_{k=1}^{n_3}
\left\|
\hat M_i^{\lambda,(k)}
\right\|_F^2
\left\|
\hat N_i^{(k)}
\right\|_F^2  \\
&\le
\frac{1}{n_3}
\left(
\sum_{k=1}^{n_3}
\left\|
\hat M_i^{\lambda,(k)}
\right\|_F^2
\right)
\left(
\sum_{k=1}^{n_3}
\left\|
\hat N_i^{(k)}
\right\|_F^2
\right)  \\
&=
n_3
\left\|\mathcal M_i^\lambda\right\|_F^2
\left\|\mathcal N_i\right\|_F^2 .
\end{aligned}
\]
Therefore,
\[
\begin{aligned}
&\sum_{i\in\mathcal I_+}
\frac{\|\tilde{\mathcal A}_{i::}\|_F^2}
{\|\tilde{\mathcal A}\|_F^2}
\left\|
\tilde{\mathcal A}_{i::}^{*}
\left(
\tilde{\mathcal A}_{i::}
\tilde{\mathcal A}_{i::}^{*}
+\lambda \mathcal I
\right)^{-1}
\mathcal N_i
\right\|_F^2 \\
&\le
n_3
\sum_{i\in\mathcal I_+}
\frac{\|\tilde{\mathcal A}_{i::}\|_F^2}
{\|\tilde{\mathcal A}\|_F^2}
\left\|
\tilde{\mathcal A}_{i::}^{*}
\left(
\tilde{\mathcal A}_{i::}
\tilde{\mathcal A}_{i::}^{*}
+\lambda \mathcal I
\right)^{-1}
\right\|_F^2
\|\mathcal N_i\|_F^2 \\
&\le
n_3
\left(
\max_{i\in\mathcal I_+}
\left\|
\tilde{\mathcal A}_{i::}^{*}
\left(
\tilde{\mathcal A}_{i::}
\tilde{\mathcal A}_{i::}^{*}
+\lambda \mathcal I
\right)^{-1}
\right\|_F^2
\right)
\sum_{i=1}^{n_1}\|\mathcal N_i\|_F^2 .
\end{aligned}
\]
Recalling the definition
\[
\eta(\omega,\lambda)
:=
n_3\,
\omega^2
\max_{i\in\mathcal I_+}
\left\|
\tilde{\mathcal A}_{i::}^{*}
\left(
\tilde{\mathcal A}_{i::}
\tilde{\mathcal A}_{i::}^{*}
+\lambda \mathcal I
\right)^{-1}
\right\|_F^2,
\]
we obtain
\[
\begin{aligned}
&\omega^2\left(1+\frac{1}{\alpha}\right)
\sum_{i\in\mathcal I_+}
\frac{\|\tilde{\mathcal A}_{i::}\|_F^2}
{\|\tilde{\mathcal A}\|_F^2}
\left\|
\tilde{\mathcal A}_{i::}^{*}
\left(
\tilde{\mathcal A}_{i::}
\tilde{\mathcal A}_{i::}^{*}
+\lambda \mathcal I
\right)^{-1}
\mathcal N_i
\right\|_F^2 \\
&\le
\left(1+\frac{1}{\alpha}\right)
\eta(\omega,\lambda)
\sum_{i=1}^{n_1}\|\mathcal N_i\|_F^2.
\end{aligned}
\]
Finally, if we define the tensor
\[
\mathcal N:=\mathcal E_A\mathcal X^\star-\mathcal E_B,
\]
then its \(i\)-th horizontal slice is \(\mathcal N_i\), and hence
\[
\sum_{i=1}^{n_1}\|\mathcal N_i\|_F^2
=
\|\mathcal N\|_F^2
=
\|\mathcal E_A\mathcal X^\star-\mathcal E_B\|_F^2.
\]
Therefore,
\[
\omega^2\left(1+\frac{1}{\alpha}\right)
\sum_{i\in\mathcal I_+}
\frac{\|\tilde{\mathcal A}_{i::}\|_F^2}
{\|\tilde{\mathcal A}\|_F^2}
\left\|
\tilde{\mathcal A}_{i::}^{*}
\left(
\tilde{\mathcal A}_{i::}
\tilde{\mathcal A}_{i::}^{*}
+\lambda \mathcal I
\right)^{-1}
\mathcal N_i
\right\|_F^2
\le
\left(1+\frac{1}{\alpha}\right)
\eta(\omega,\lambda)
\|\mathcal E_A \mathcal{X}^\star-\mathcal E_B\|_F^2.
\]

Thus, combining the above estimates and applying the tower property of conditional expectation, we obtain
\[
\begin{aligned}
\mathbb E\!\left[\|\mathcal E^{t+1}\|_F^2\right]
&\le
(1+\alpha)\mathbb E\!\left[\|\mathcal E^{t}\|_F^2\right] \\
&\quad
-
(1+\alpha)
\frac{2\omega-\omega^2}
{\sigma_{\max}^2(\bcirc(\tilde{\mathcal A}))+\lambda}
\,
\mathbb E\!\left[
\sum_{i=1}^{n_1}
\frac{\|\tilde{\mathcal A}_{i::}\|_F^2}
{\|\tilde{\mathcal A}\|_F^2}
\left\|
\tilde{\mathcal A}_{i::}\mathcal E^t
\right\|_F^2
\right] \\
&\quad
+
\left(1+\frac{1}{\alpha}\right)
\eta(\omega,\lambda)
\|\mathcal E_A \mathcal{X}^\star-\mathcal E_B\|_F^2.
\end{aligned}
\]
To complete the proof, it remains to bound the weighted expectation term in the second line from below.

To bound the weighted term from below, define
\[
\underline p
:=
\min_{i\in[n_1]:\,p_i>0}
\frac{\|\tilde{\mathcal A}_{i::}\|_F^2}{\|\tilde{\mathcal A}\|_F^2}.
\]
Then
\[
\sum_{i=1}^{n_1}
\frac{\|\tilde{\mathcal A}_{i::}\|_F^2}{\|\tilde{\mathcal A}\|_F^2}
\left\|
\tilde{\mathcal A}_{i::}\mathcal E^t
\right\|_F^2
\ge
\underline p
\sum_{i=1}^{n_1}
\left\|
\tilde{\mathcal A}_{i::}\mathcal E^t
\right\|_F^2.
\]
Moreover,
\[
\sum_{i=1}^{n_1}
\left\|
\tilde{\mathcal A}_{i::}\mathcal E^t
\right\|_F^2
=
\|\tilde{\mathcal A}\mathcal E^t\|_F^2
\ge
\sigma_{\min}^2(\bcirc(\tilde{\mathcal A}))
\|\mathcal E^t\|_F^2.
\]
Hence
\[
\sum_{i=1}^{n_1}
\frac{\|\tilde{\mathcal A}_{i::}\|_F^2}{\|\tilde{\mathcal A}\|_F^2}
\left\|
\tilde{\mathcal A}_{i::}\mathcal E^t
\right\|_F^2
\ge
\underline p\,
\sigma_{\min}^2(\bcirc(\tilde{\mathcal A}))
\|\mathcal E^t\|_F^2.
\]
Substituting this into the previous estimate yields
\[
\mathbb E\!\left[\|\mathcal E^{t+1}\|_F^2\right]
\le
\rho_\alpha(\omega,\lambda)\,
\mathbb E\!\left[\|\mathcal E^{t}\|_F^2\right]
+
\eta_\alpha(\omega,\lambda)
\|\mathcal E_A \mathcal{X}^\star-\mathcal E_B\|_F^2,
\]
where
\[
\rho_\alpha(\omega,\lambda)
:=
(1+\alpha)
\left(
1-
\underline p\,
\frac{(2\omega-\omega^2)\sigma_{\min}^2(\bcirc(\tilde{\mathcal A}))}
{\sigma_{\max}^2(\bcirc(\tilde{\mathcal A}))+\lambda}
\right),
\]
and
\[
\eta_\alpha(\omega,\lambda)
:=
\left(1+\frac{1}{\alpha}\right)\eta(\omega,\lambda).
\]
This completes the proof.

\subsection{Remark}

Two points clarify the proof strategy above. First, unlike the standard TRK case in Lemma~\ref{lem:orthogonality},
the regularized tensor operator \(\tilde{\mathcal Q}_i^\lambda\), equivalently
its Fourier-slice matrix representation \(\hat Q_i^{\lambda,(k)}\), is not an
orthogonal projection when \(\lambda>0\).
Consequently, the cross term in the expansion of \(\|\mathcal E^{t+1}\|_F^2\) cannot be eliminated by an orthogonality argument and is instead controlled by Young's inequality.

Second, although one might try to remove the cross term by taking expectation and using the noise structure, this is not immediate under the sampling rule
\[
p_i=\frac{\|\tilde{\mathcal A}_{i::}\|_F^2}{\|\tilde{\mathcal A}\|_F^2},
\]
since the sampling probabilities themselves depend on the observed noisy tensor \(\tilde{\mathcal A}\). In addition, the bound involving \(\sigma_{\max}^2(\bcirc(\tilde{\mathcal A}))\) follows from the fact that each Fourier-slice row norm is bounded by the corresponding global spectral quantity of \(\bcirc(\tilde{\mathcal A})\).

\end{document}